\providecommand{\U}[1]{\protect\rule{.1in}{.1in}}
\newtheorem{theorem}{Theorem}
\newtheorem{lemma}[theorem]{Lemma}
\newtheorem{proposition}[theorem]{Proposition}
\newtheorem{remark}[theorem]{Remark}
\newenvironment{proof}[1][Proof]{\noindent\textbf{#1.} }{\ \rule{0.5em}{0.5em}}
\numberwithin{equation}{section}
\begin{document}

\title{A Morita type proof of the replica-symmetric formula for SK}
\author{Erwin Bolthausen, University of Zurich}
\date{}
\maketitle

\begin{abstract}
We give a proof of the replica symmetric formula for the free energy of the
Sherrington-Kirkpatrick model in high temperature which is based on the TAP
formula. This is achieved by showing that the conditional annealed free energy
equals the quenched one, where the conditioning is given by an appropriate
$\sigma$-field with respect to which the TAP solutions are measurable.

\end{abstract}

\section{Introduction}

We consider the standard \textbf{Sherrington-Kirkpatrick model} with an
external field having the random Hamiltonian%
\begin{equation}
H_{\beta,h}\left(  \mathbf{\sigma}\right)  :=\frac{\beta}{\sqrt{2}}%
\sum_{i,j=1}^{N}g_{ij}^{\left(  N\right)  }\sigma_{i}\sigma_{j}+h\sum
_{i=1}^{N}\sigma_{i}\label{SK_Hamiltonian}%
\end{equation}
where $\beta>0$ and $h\in\mathbb{R}$ are real parameters,\ $\mathbf{\sigma
}=\left(  \sigma_{i}\right)  \in\Sigma_{N}:=\left\{  -1,1\right\}  ^{N},$ and
$g_{ij}^{\left(  N\right)  }$ for $i,j$ are i.i.d. centered Gaussians with
variance $1/N,$ defined on a probability space $\left(  \Omega,\mathcal{F}%
,\mathbb{P}\right)  $.

The random partition function is%
\[
Z_{N,\beta,h}:=2^{-N}\sum_{\mathbf{\sigma}}\exp\left[  H_{\beta,h}\left(
\mathbf{\sigma}\right)  \right]  ,
\]
and the Gibbs distribution is%
\begin{equation}
\operatorname*{GIBBS}\nolimits_{N,\beta,h}\left(  \mathbf{\sigma}\right)
:=\frac{2^{-N}}{Z_{N,\beta,h}}\exp\left[  H_{\beta,h}\left(  \mathbf{\sigma
}\right)  \right]  .\label{Gibbs}%
\end{equation}
It is known that%
\[
f\left(  \beta,h\right)  :=\lim_{N\rightarrow\infty}\frac{1}{N}\log
Z_{N,\beta,h}=\lim_{N\rightarrow\infty}\frac{1}{N}\mathbb{E}\log Z_{N,\beta,h}%
\]
exists, is non-random, and is given by the Parisi variational formula (see
\cite{GuTo},\ \cite{TalaBuch},\ \cite{PanchBook}). Furthermore, for small
$\beta,\ f\left(  \beta,h\right)  $ is given by the replica-symmetric formula,
originally proposed by Sherrington and Kirkpatrick (\cite{SK}):

\begin{theorem}
\label{Th_main}There exists $\beta_{0}>0$ such that for all $h,\beta$ with
$\beta\leq\beta_{0}$%
\[
f\left(  \beta,h\right)  =\operatorname*{RS}\left(  \beta,h\right)
:=\inf_{q\geq0}\left[  \int\log\cosh\left(  h+\beta\sqrt{q}x\right)
\phi\left(  dx\right)  +\frac{\beta^{2}\left(  1-q\right)  ^{2}}{4}\right]  .
\]
Here, $\phi$ is the standard Gaussian distribution.
\end{theorem}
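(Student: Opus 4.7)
The bound $f(\beta,h)\leq\operatorname{RS}(\beta,h)$ is classical (Guerra interpolation, or the ordinary annealed bound at $\beta$ small enough), so the content of the theorem is the matching lower bound for $\beta\leq\beta_{0}$. Following the abstract, the plan is a Morita-type conditional annealed argument: introduce a $\sigma$-field $\mathcal{G}\subset\mathcal{F}$ with respect to which a TAP fixed point $m^{\ast}\in(-1,1)^{N}$ for the disorder is measurable, prove
\[
\tfrac{1}{N}\log\mathbb{E}\bigl[Z_{N,\beta,h}\,\big|\,\mathcal{G}\bigr]\;\longrightarrow\;\operatorname{RS}(\beta,h)
\]
almost surely, and finally show that the conditional Jensen inequality $\mathbb{E}[\log Z_{N}\mid\mathcal{G}]\leq\log\mathbb{E}[Z_{N}\mid\mathcal{G}]$ is asymptotically tight.

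\textbf{The $\sigma$-field and the conditional computation.} At high temperature the Bolthausen iteration
\[
m_{i}^{(k+1)}=\tanh\!\left(h+\beta\sum_{j}g_{ij}\,m_{j}^{(k)}-\beta^{2}(1-q_{k})\,m_{i}^{(k-1)}\right),\qquad q_{k}:=\tfrac{1}{N}\sum_{i}\bigl(m_{i}^{(k)}\bigr)^{2},
\]
is contractive and its iterates converge to a TAP solution $m^{\ast}$ with $q_{k}\to q$, the unique RS minimizer. Take $\mathcal{G}$ to be generated by $(m^{(k)})_{k\geq 0}$ together with the scalar projections $\sum_{j}g_{ij}m_{j}^{(k)}$. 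Conditionally on $\mathcal{G}$, each $g_{ij}$ splits into a $\mathcal{G}$-measurable drift plus an independent Gaussian residue, so that $H_{\beta,h}(\sigma)$ decomposes as a $\mathcal{G}$-measurable linear term $\sum_{i}\Gamma_{i}\sigma_{i}$ (with local fields $\Gamma_{i}$ satisfying $\tanh\Gamma_{i}=m_{i}^{\ast}$ by the TAP equation) plus a Gaussian fluctuation whose covariance is dictated by $1-q$. A direct computation then yields
\[
\tfrac{1}{N}\log\mathbb{E}\bigl[Z_{N,\beta,h}\,\big|\,\mathcal{G}\bigr]\;\approx\;\tfrac{1}{N}\sum_{i}\log\cosh\Gamma_{i}\;+\;\tfrac{\beta^{2}(1-q)^{2}}{4},
\]
and since the empirical distribution of the $\Gamma_{i}$ at the TAP fixed point converges to the law of $h+\beta\sqrt{q}\,X$ with $X\sim\phi$, the right-hand side converges to $\operatorname{RS}(\beta,h)$, optimality of $q$ being a consequence of the TAP self-consistency equation.

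\textbf{Closing the gap and the main obstacle.} Jensen plus the previous paragraph give $f\leq\operatorname{RS}$. For the reverse direction, establish a conditional second-moment bound
\[
\mathbb{E}\bigl[Z_{N}^{2}\,\big|\,\mathcal{G}\bigr]\;\leq\;C\,\mathbb{E}[Z_{N}\mid\mathcal{G}]^{2}\qquad(\beta\leq\beta_{0}),
\]
uniformly in $N$. Combined with the standard Gaussian concentration of $\log Z_{N}$ (Borell--TIS), this forces $\log Z_{N}-\log\mathbb{E}[Z_{N}\mid\mathcal{G}]=o(N)$ in probability, and hence $f\geq\operatorname{RS}$. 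The second-moment estimate is the technical heart of the proof: after doubling $\sigma\mapsto(\sigma,\sigma')$, one must show that the conditional law of the overlap $R(\sigma,\sigma')=\tfrac{1}{N}\sum_{i}\sigma_{i}\sigma_{i}'$ concentrates exponentially at the TAP value $q$, equivalently that the quadratic form controlling overlap fluctuations is positive definite. This is precisely where $\beta\leq\beta_{0}$ enters (the form degenerates at the de~Almeida--Thouless line), and it will be the main obstacle. A secondary technical point is to verify that the limiting object of the Bolthausen iteration genuinely generates a $\sigma$-field for which the claimed orthogonal decomposition of the disorder holds.
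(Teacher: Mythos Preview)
Your strategy is exactly the paper's: condition on a $\sigma$-field making the TAP iterates measurable, compute the conditional first moment to get $\operatorname{RS}$, and use a conditional second moment together with Gaussian concentration for the lower bound. Two technical points, however, need to be corrected to match what is actually provable.

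First, the paper does \emph{not} work with a single $\sigma$-field generated by the limiting TAP fixed point. It uses a sequence $\mathcal{G}_{k}$ (generated by the first $k$ pairs $(\boldsymbol{\xi}^{(s)},\boldsymbol{\eta}^{(s)})$ of the iterative construction), proves statements of the form
\[
\lim_{k\to\infty}\limsup_{N\to\infty}\mathbb{E}\Bigl|\tfrac{1}{N}\log\mathbb{E}_{k}Z_{N}-\operatorname{RS}(\beta,h)\Bigr|=0,
\]
and only lets $k\to\infty$ \emph{after} $N\to\infty$. What you call a ``secondary technical point'' is in fact the organizing device of the whole argument: for finite $N$ there is no exact TAP solution, and the clean orthogonal decomposition of $\mathbf{g}$ you describe is available only at the level of the finite-$k$ approximation (Proposition~\ref{Prop_main_gk}). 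Attempting to pass directly to a limiting $\sigma$-field loses the explicit conditional covariance structure that makes the computation tractable.

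Second, the paper does \emph{not} obtain a uniform bound $\mathbb{E}_{k}(Z_{N}^{2})\le C\,(\mathbb{E}_{k}Z_{N})^{2}$; it only shows that $N^{-1}\log$ of the ratio tends to $0$ in the double limit. Paley--Zygmund then yields merely
\[
\mathbb{P}\Bigl(\tfrac{1}{N}\log Z_{N}\ge \operatorname{RS}(\beta,h)-\varepsilon'\Bigr)\ge \tfrac{1}{4}\,e^{-\varepsilon N}
\]
for any $\varepsilon>0$ and suitable $k,N$. This sub-exponential lower bound is still enough: Gaussian concentration gives fluctuations of $N^{-1}\log Z_{N}$ of order $e^{-\eta^{2}N/\beta^{2}}$, so choosing $\varepsilon<\eta^{2}/\beta^{2}$ forces $N^{-1}\mathbb{E}\log Z_{N}\ge \operatorname{RS}-\varepsilon'-\eta$. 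Your claimed uniform constant $C$ would shortcut this, but it is stronger than what the method delivers (and likely stronger than what is true, cf.\ the toy computation in the paper's Comments section).
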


For $h\neq0,$ the infimum is uniquely attained at $q=q\left(  \beta,h\right)
$ which satisfies%
\begin{equation}
q=\int\tanh^{2}\left(  h+\beta\sqrt{q}x\right)  \phi\left(  dx\right)
.\label{fixed_point}%
\end{equation}
This equation has a unique solution for $h\neq0,$ and for $h=0$ if $\beta
\leq1.$ For $\beta>1$ (and $h=0$), there are two solutions, one being $0,$ and
a positive one, which is the relevant for the minimization (see
\cite{TalaBuch}). We will assume $h>0,$ and $q$ will exclusively be used for
this number$.$

$f\left(  \beta,h\right)  =\operatorname*{RS}\left(  \beta,h\right)  $ is
believed to be true under the de Almeida-Thouless condition (AT-condition for
short)%
\begin{equation}
\beta^{2}\int\frac{\phi\left(  dx\right)  }{\cosh^{4}\left(  h+\beta\sqrt
{q}x\right)  }\leq1,\label{AT}%
\end{equation}
but this is still an open problem. At $h=0,$ the AT-condition is $\beta\leq1,$
and in this regime, $f\left(  \beta,0\right)  =\operatorname*{RS}\left(
\beta,0\right)  =\beta^{2}/4$ is known since long and can easily be proved by
a second moment method. In fact, in this case, the free energy equals the
annealed free energy%
\[
f\left(  \beta,0\right)  =f_{\mathrm{\mathrm{ann}}}\left(  \beta,0\right)
=\lim_{N\rightarrow\infty}\frac{1}{N}\log\mathbb{E}Z_{N,\beta,0}=\beta^{2}/4.
\]
It is however easy to see that for $h\neq0$, and any $\beta>0,$ neither
$f\left(  \beta,h\right)  $ nor $\operatorname*{RS}\left(  \beta,h\right)  $
equals $f_{\mathrm{\mathrm{ann}}}\left(  \beta,h\right)  $.

The aim of this note is to prove that $f\left(  \beta,h\right)
=\operatorname*{RS}\left(  \beta,h\right)  $ can, for small $\beta,$ be proved
by a \textit{conditional }\textquotedblleft
quenched=annealed\textquotedblright\ argument, via a second moment method.
Roughly speaking, we prove that there is a sub-$\sigma$-field$\mathcal{\ G}%
\subset\mathcal{F}$ such that $f\left(  \beta,h\right)  =\lim_{N\rightarrow
\infty}N^{-1}\log\mathbb{E}\left(  \left.  Z_{N}\right\vert \mathcal{G}%
\right)  =\operatorname*{RS}\left(  \beta,h\right)  $ almost surely, and where
we can estimate the conditional second moment by the square of the first one.
A key point is the connection of $\mathcal{G}$ (it will actually be a sequence
of $\sigma$-fields) with the Thouless-Anderson-Palmer equation, introduced in
\cite{TAP}, and in particular with the recursive construction given in
\cite{BoTAP}. The reason the method works is that the conditionally annealed
Gibbs measure is essentially a Curie-Weiss type model, centered at the
solution of the TAP equation, and as such it can be analyzed as a classical
mean-field model.

The method is closely related to arguments used for the first time by Morita
in \cite{Morita}. In fact, Morita invented the method to derive the quenched
free energy by a partial annealing, fixing part of the Hamilton which is
handled in a \textquotedblleft quenched way\textquotedblright, but where this
quenched part can be analyzed much easier than for the full Hamiltonian. This
is exactly what we do here by the conditioning.

Unfortunately, the argument does not seem to work in the full AT-region. This
is partly due to the fact that the second moment method does not work up to
the correct critical line. There are however also other difficulties.

Therefore, the result we prove is not new at all, and in fact, the proof is
quite longer than existing proofs. However, we believe the method is of
interest, and can be used quite broadly for other models.

A related approach has recently been developed independently by Jian Ding and
Nike Sun \cite{DingSun} for the lower bound of the memory capacity of a
version of the perceptron at zero temperature up to the predicted critical
value for the validity of the replica symmetric solution.

Our proof given does not use \textit{any }of the results on the SK-model
obtained previously, except for very simple ones, like the proof of the
uniqueness of $q$ for $h\neq0$, and on some simple computations from
\cite{BoTAP}. The core of the argument given here does not use the result from
\cite{BoTAP}, but it is motivated by the construction given there.

\noindent\textbf{Basic assumptions and notations: }We always assume $h\neq0,$
as there is nothing new in the argument for $h=0$ (but see the comments at the
end of the paper). For convenience, we assume $h>0.$ We usually drop the $N$
in $g_{ij}^{\left(  N\right)  },$ but the reader should keep in mind that
essentially any formula we write depends on the size parameter $N$. We also
often drop the parameters $\beta,h$ in the notation. If we write
\textquotedblleft for $\beta$ small enough\textquotedblright, we mean that
there exists $\beta_{0}>0$ such that the statement holds for $\beta\leq
\beta_{0}$ and for all $h>0.$ We will not be specific about $\beta_{0}$.

We typically use boldface letters, like $\mathbf{x}$, for vectors in
$\mathbb{R}^{N},$ occasionally random vectors, with components $x_{1}%
,\ldots,x_{N}.$ If $f:\mathbb{R\rightarrow R}$, we write $f\left(
\mathbf{x}\right)  \in\mathbb{R}^{N}$ for the vector with components $f\left(
x_{i}\right)  .$

In $\mathbb{R}^{N}$, we will use the inner product%
\[
\left\langle \mathbf{x},\mathbf{y}\right\rangle :=\frac{1}{N}\sum_{i=1}%
^{N}x_{i}y_{i},
\]
and the norm $\left\Vert \mathbf{x}\right\Vert :=\sqrt{\left\langle
\mathbf{x},\mathbf{x}\right\rangle }.$ We will also use the shorthand
$\operatorname*{Th}\left(  x\right)  :=\tanh\left(  h+\beta x\right)  .$

We use $Z,Z^{\prime},Z_{1}$ etc. for generic standard Gaussian random
variables. If several of them appear in one formula, then they are assumed to
be independent. We write $E$ for the expectation with respect to them.
\textquotedblleft Gaussian\textquotedblright\ always means centered Gaussian
unless stated otherwise. We hope the reader will not confuse these $Z$'s with
the partition functions, but it should always be clear from the context what
is what.

The Gibbs expectation under (\ref{Gibbs}) is usually written as $\left\langle
\cdot\right\rangle .$ $C$ is used as a generic positive constant which may
change from line to line.

If $\mathbf{a,\mathbf{b}}\in\mathbb{R}^{N},$ we write $\mathbf{a\otimes
\mathbf{b}}$ for the matrix%
\[
\left(  \mathbf{a\otimes\mathbf{b}}\right)  _{ij}:=\frac{a_{i}b_{j}}{N}.
\]
Remark that if $\mathbf{a,\mathbf{b,\mathbf{c}}}\in\mathbb{R}^{N},$ then
$\left(  \mathbf{\mathbf{a}}\otimes\mathbf{\mathbf{\mathbf{b}}}\right)
\mathbf{c=}\left\langle \mathbf{\mathbf{\mathbf{b}}},\mathbf{c}\right\rangle
\mathbf{a.}$

If $A$ is matrix, we write $A^{T}$ for the transposed, and if $A$ is square%
\[
\bar{A}:=\frac{1}{\sqrt{2}}\left(  A+A^{T}\right)  .
\]

\noindent\textbf{Outline of the argument: }We end the introduction with a
quick outline of the main idea. The Gibbs means $\mathbf{m}:=\left\langle
\mathbf{\sigma}\right\rangle $ are random variables. These random variables
satisfy (in the $N\rightarrow\infty$ limit) the so-called TAP equations. The
basic idea is to write the partition function $Z_{N}$ in terms of an average
over an appropriately tilted coin-tossing measure%
\[
p\left(  \mathbf{\sigma}\right)  =\prod_{i=1}^{N}p_{i}\left(  \sigma
_{i}\right)
\]
where%
\[
p_{i}\left(  \sigma_{i}\right)  =\frac{2^{-N}\mathrm{e}^{h_{i}\sigma_{i}}%
}{\cosh\left(  h_{i}\right)  },\ \sigma_{i}=\pm1,
\]
where $\mathbf{h}$ satisfies $\mathbf{m}=\tanh\left(  \mathbf{h}\right)  ,$
i.e. the expectation of $\sigma_{i}$ under $p_{i}$ is exactly $m_{i}$ where
$\mathbf{m}$ satisfies (approximately) the TAP equations%
\[
\mathbf{m}=\operatorname*{Th}\left(  \mathbf{\bar{g}m}-\beta\left(
1-q\right)  \mathbf{m}\right)  .
\]
Then%
\[
Z_{N}=\sum_{\mathbf{\sigma}}2^{-N}\exp\left[  H\left(  \mathbf{\sigma}\right)
\right]  =\prod_{i=1}^{N}\cosh\left(  h_{i}\right)  \sum_{\mathbf{\sigma}%
}p\left(  \mathbf{\sigma}\right)  \exp\left[  H\left(  \mathbf{\sigma}\right)
-N\left\langle \mathbf{h},\mathbf{\sigma}\right\rangle \right]  .
\]%
\[
\frac{1}{N}\log Z=\frac{1}{N}\sum_{i=1}^{N}\log\cosh\left(  h_{i}\right)
+\frac{1}{N}\log\sum_{\mathbf{\sigma}}p\left(  \mathbf{\sigma}\right)
\exp\left[  H\left(  \mathbf{\sigma}\right)  -N\left\langle \mathbf{h}%
,\mathbf{\sigma}\right\rangle \right]  .
\]
The a.s.-limit of the first part will be easy to evaluate, and gives%
\[
E\log\cosh\left(  h+\beta\sqrt{q}Z\right)
\]
which is the first part of the replica symmetric formula. For the second part,
we apply a variant of the second moment method, but it is quite delicate, as
the measures $p$ depend on the random variables $g_{ij}$. Therefore, we
construct a sub-$\sigma$-field $\mathcal{G}$ which has the property that
$\mathbf{m}$ is $\mathcal{G}$-m.b. Then one has%
\begin{multline*}
\mathbb{E}\left(  \left.  \sum\nolimits_{\sigma}p\left(  \sigma\right)
\exp\left[  H\left(  \sigma\right)  -N\left\langle \mathbf{h},\mathbf{\sigma
}\right\rangle \right]  \right\vert \mathcal{G}\right) \\
=\sum\nolimits_{\sigma}p\left(  \sigma\right)  \mathbb{E}\left(  \left.
\exp\left[  H\left(  \sigma\right)  -N\left\langle \mathbf{h},\mathbf{\sigma
}\right\rangle \right]  \right\vert \mathcal{G}\right)  ,
\end{multline*}
and it will turn out that $\mathbb{E}\left(  \left.  \exp\left[  H\left(
\sigma\right)  -N\left\langle \mathbf{h},\mathbf{\sigma}\right\rangle \right]
\right\vert \mathcal{G}\right)  \approx\exp\left[  N\beta^{2}\left(
1-q\right)  ^{2}/4\right]  $ for small $\beta$. Furthermore, one can estimate
the conditional second moment. The implementation of this idea requires not
one $\sigma$-field $\mathcal{G}$, but a sequence $\left\{  \mathcal{G}%
_{k}\right\}  .$

\section{The recursive modification of the interaction matrix}

We will not explicitly use the TAP equations, but the reader should keep in
mind the rough outline of the argument given above. In spirit, we will heavily
rely on the construction in \cite{BoTAP}, but we will not use in a substantial
way the results of this paper. For the purpose here, it is simpler to work
directly with random variables which are approximations of the iterative
scheme in \cite{BoTAP} which constructed approximations for the TAP equations
through%
\begin{equation}
\mathbf{m}^{\left(  k+1\right)  }:=\operatorname*{Th}\left(  \mathbf{\bar{g}%
m}^{\left(  k\right)  }-\beta\left(  1-q\right)  \mathbf{m}^{\left(
k-1\right)  }\right) \label{TAP_iteration}%
\end{equation}
with some initialization. We proved in \cite{BoTAP} that these random
variables defined through this iteration have a representation which makes it
possible to prove the convergence in the full high temperature region. We
directly use here this representation without using the iterative scheme
above. There is a further slight, but technically convenient, modification to
the approach in \cite{BoTAP}. There, we took the symmetrized matrix
$\mathbf{g=}\left(  g_{ij}\right)  $ which has i.i.d. Gaussian entries for
$i<j$ with variance $1/N,$ and $g_{ii}=0.$ Fixing the diagonal to be $0$ is of
course of no relevance as the diagonal part cancels out in the Gibbs
distribution. We then did construct a sequence $\mathbf{g}$ of modifications
$\mathbf{g}^{\left(  k\right)  }$, and a sequence $\mathcal{F}_{k}$ of
sub-$\sigma$-fields, whose behavior is the crucial part of the analysis. In
particular, the $\mathbf{g}^{\left(  k\right)  }$ are conditionally Gaussian,
given $\mathcal{F}_{k-2},$ and conditionally independent of $\mathcal{F}%
_{k-1}.$ Of crucial importance for the analysis in \cite{BoTAP} and also for
the analysis here is the behavior of the conditional covariances.
Unfortunately, the estimates for these in \cite{BoTAP} were quite complicated,
and we need them here still a bit more precise.

It turns out that these computations are simpler by sticking to $g_{ij}$ which
are independent for $i,j\leq N.$ The symmetrized matrix is then $\mathbf{\bar
{g}:=}\left(  \mathbf{g}+\mathbf{g}^{T}\right)  /\sqrt{2}.$ This looks being a
trivial rewriting, but we will define the $\sigma$-fields $\mathcal{G}_{k}$
here in terms of $\mathbf{g}$, and therefore, they are different from the
$\mathcal{F}_{k}$ used in \cite{BoTAP}. The main advantage is that the
construction of the $\mathbf{g}^{\left(  k\right)  }$ is explicit for all $k,$
and the conditional covariances we need are totally explicit as well, which
simplifies the computations considerably.\footnote{Unfortunately, we
overlooked this simplification when writing \cite{BoTAP} which would have
saved 1/3 of that paper.}

We construct sequence $\left\{  \gamma_{k}\right\}  _{k\geq1}$, $\left\{
\rho_{k}\right\}  _{k\geq1}$ of real numbers, and sequences of random matrices
$\mathbf{g}^{\left(  k\right)  }$ together with sequences of random vectors
$\mathbf{\phi}^{\left(  k\right)  }\in\mathbb{R}^{N},\ k\geq1.$ Define%
\[
\gamma_{1}=E\tanh\left(  h+\beta\sqrt{q}Z\right)  ,\ \rho_{1}:=\sqrt{q}%
\gamma_{1},
\]
and recursively%
\[
\rho_{k}:=\psi\left(  \rho_{k-1}\right)  ,~\gamma_{k}:=\frac{\rho_{k}%
-\sum_{j=1}^{k-1}\gamma_{j}^{2}}{\sqrt{q-\sum_{j=1}^{k-1}\gamma_{j}^{2}}}%
\]
where $\psi:\left[  0,q\right]  \rightarrow(0,q]$ is defined by%
\[
\psi\left(  t\right)  :=E\operatorname*{Th}\left(  \sqrt{t}Z+\sqrt
{q-t}Z^{\prime}\right)  \operatorname*{Th}\left(  \sqrt{t}Z+\sqrt
{q-t}Z^{\prime\prime}\right)  .
\]
Remark that $\psi\left(  q\right)  =q,$ and $\psi\left(  0\right)  =\gamma
_{1}^{2}.$ The following easy result was proved in \cite{BoTAP}.

\begin{lemma}
\label{Le_sequences&AT}

\begin{enumerate}
\item[a)] $\left\{  \rho_{k}\right\}  $ is an increasing sequence of positive
numbers. $\lim_{k\rightarrow\infty}\rho_{k}=q$ holds if and only if (\ref{AT})
is satisfied. If (\ref{AT}) holds with the strict inequality, then the
convergence of $\left\{  \rho_{k}\right\}  $ is exponentially fast.

\item[b)] $\Gamma_{k-1}^{2}:=\sum_{j=1}^{k-1}\gamma_{j}^{2}<\rho_{k}<q$ holds
for all $k,$ and $\sum_{j=1}^{\infty}\gamma_{j}^{2}=q$ holds if and only if
(\ref{AT}) is satisfied.
\end{enumerate}
\end{lemma}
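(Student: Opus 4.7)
The plan is to derive everything from the first two derivatives of $\psi$ on $[0,q]$ together with one elementary algebraic identity. The key preliminary observation is that $(X_{t},Y_{t}):=(\sqrt{t}Z+\sqrt{q-t}Z',\sqrt{t}Z+\sqrt{q-t}Z'')$ is a centred Gaussian pair with variance $q$ and covariance $t$, so the standard Gaussian-interpolation identity (only the covariance varies in $t$) gives
\[
\psi'(t)=E[\operatorname*{Th}{}'(X_{t})\operatorname*{Th}{}'(Y_{t})], \qquad \psi''(t)=E[\operatorname*{Th}{}''(X_{t})\operatorname*{Th}{}''(Y_{t})].
\]
Conditionally on $Z$, the variables $X_{t}$ and $Y_{t}$ are i.i.d., so each of these expectations equals $E[(E[\operatorname*{Th}{}^{(j)}(X_{t})\mid Z])^{2}]\geq 0$. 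Hence $\psi$ is non-decreasing and strictly convex on $[0,q]$, $\psi(q)=q$ by \eqref{fixed_point}, and $\psi'(q)=\beta^{2}E[\cosh^{-4}(h+\beta\sqrt{q}Z)]$ is precisely the left-hand side of \eqref{AT}.

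For part (a), the tangent-at-$q$ bound $\psi(t)\geq q-\psi'(q)(q-t)$, combined with \eqref{AT}, gives $\psi(t)\geq t$ on $[0,q]$, strict for $t<q$ by strict convexity. Since $\rho_{1}=\sqrt{q}\gamma_{1}\in(0,q)$ (the upper bound from the strict Jensen inequality $\gamma_{1}^{2}<E\operatorname*{Th}^{2}(\sqrt{q}Z)=q$), the iterates $\rho_{k+1}=\psi(\rho_{k})$ are strictly increasing and bounded by $q$, so they converge to the only fixed point of $\psi$ in $[\rho_{1},q]$, namely $q$. Under strict \eqref{AT}, $\psi'\leq\psi'(q)<1$ on $[\rho_{1},q]$ (from $\psi''\geq 0$), and the MVT yields $q-\rho_{k+1}\leq\psi'(q)(q-\rho_{k})$, i.e.\ geometric convergence. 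Conversely, if \eqref{AT} fails then $\psi'(q)>1$ and convexity forces a second fixed point $q^{*}\in(0,q)$, toward which the iterates converge, so $\lim\rho_{k}=q^{*}<q$.

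Part (b) is an induction. The base case is $\Gamma_{0}^{2}=0<\rho_{1}<q$, as above. For the step, substituting $\gamma_{k}^{2}=(\rho_{k}-\Gamma_{k-1}^{2})^{2}/(q-\Gamma_{k-1}^{2})$ into $\Gamma_{k}^{2}=\Gamma_{k-1}^{2}+\gamma_{k}^{2}$ and simplifying gives the identity
\[
\rho_{k+1}-\Gamma_{k}^{2}=\frac{q\psi(\rho_{k})-\rho_{k}^{2}-\Gamma_{k-1}^{2}\bigl(\psi(\rho_{k})+q-2\rho_{k}\bigr)}{q-\Gamma_{k-1}^{2}}.
\]
The denominator is positive by the inductive hypothesis, the factor $\psi(\rho_{k})+q-2\rho_{k}=(\psi(\rho_{k})-\rho_{k})+(q-\rho_{k})$ is positive by part (a), and the numerator may be rearranged as $(q-\rho_{k})(\psi(\rho_{k})-\rho_{k})+(\rho_{k}-\Gamma_{k-1}^{2})(\psi(\rho_{k})+q-2\rho_{k})$, a sum of two strictly positive terms; hence $\Gamma_{k}^{2}<\rho_{k+1}$. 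The companion bound $\rho_{k+1}=\psi(\rho_{k})<\psi(q)=q$ follows from strict monotonicity of $\psi$. Finally, $\Gamma_{k}^{2}$ is non-decreasing with limit $\Gamma_{\infty}^{2}\leq q$; if $\Gamma_{\infty}^{2}<q$, then $\gamma_{k}\to 0$ forces $\rho_{k}-\Gamma_{k-1}^{2}\to 0$, so $\lim\rho_{k}=\Gamma_{\infty}^{2}$, and part (a) then gives the equivalence of $\sum\gamma_{j}^{2}=q$, $\lim\rho_{k}=q$, and \eqref{AT}.

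The only real bookkeeping is the Gaussian-interpolation derivative computation of Step~1 and the algebraic identity above; both are routine once the conditional-expectation trick is in place. The most delicate point is the monotonicity of $\rho_{k}$ outside the AT region, which needs an extra check that $\rho_{1}$ lies on the attracting side of the second fixed point $q^{*}$; this can be done via a short Cauchy--Schwarz argument on the representation $\psi(t)=E[G_{t}(\sqrt{t}W)^{2}]$ with $G_{t}(\mu):=E\operatorname*{Th}(\mu+\sqrt{q-t}Z)$.
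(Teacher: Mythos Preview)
The paper does not actually prove this lemma: it simply states ``The following easy result was proved in \cite{BoTAP}'' and moves on. Your argument---convexity of $\psi$ via the Gaussian interpolation formula, identification of $\psi'(q)$ with the AT quantity, and then standard fixed-point analysis---is precisely the approach used in \cite{BoTAP}, and it is carried out cleanly here. The algebraic identity you give for $\rho_{k+1}-\Gamma_k^2$ and its factorisation are correct, and the induction for part (b) goes through exactly as you describe whenever $\psi(\rho_k)>\rho_k$.

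There is one loose end you yourself flag: outside the AT region the claim that $\{\rho_k\}$ is increasing (and hence also your positivity argument for the numerator in part~(b), which uses $\psi(\rho_k)-\rho_k>0$) needs $\rho_1\le q^{*}$, where $q^{*}$ is the second fixed point. Your closing sentence promises ``a short Cauchy--Schwarz argument'' on the representation $\psi(t)=E[G_t(\sqrt{t}W)^2]$, but the obvious applications of Cauchy--Schwarz from that representation give only $\psi(t)\ge\gamma_1^2$ or $q\,\psi(t)\ge\psi(\sqrt{tq})^{2}$, neither of which directly yields $\psi(\rho_1)\ge\rho_1$ with $\rho_1=\sqrt{q}\,\gamma_1$. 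So as written this step is a genuine gap. That said, the non-AT regime is never used in the present paper (everything downstream lives under \eqref{AT}), and in \cite{BoTAP} the corresponding sequence is effectively initialised at $0$, for which monotonicity is immediate; the specific value $\rho_1=\sqrt{q}\,\gamma_1$ here arises from the choice $\mathbf m^{(1)}=\sqrt{q}\,\mathbf 1$. If you want to close the gap cleanly, either restrict the monotonicity claim to the AT region (which is all that is needed), or supply a complete argument for $\rho_1\le q^{*}$ rather than a pointer.
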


Next, we define the recursions for $\mathbf{g}^{\left(  k\right)
},\ \mathbf{\phi}^{\left(  k\right)  }.$ It is convenient to also introduce
vectors $\mathbf{h}^{\left(  k\right)  },\ \mathbf{m}^{\left(  k\right)  }$
which are directly related to the $\mathbf{\phi}$'s. (The $\mathbf{m}^{\left(
k\right)  }$ are the approximate solutions of the TAP equations). For $k=1$:%
\[
\mathbf{g}^{\left(  1\right)  }:=\mathbf{g},\ \mathbf{\phi}^{\left(  1\right)
}:=\mathbf{1,\ \mathbf{m}}^{\left(  1\right)  }:=\sqrt{q}\mathbf{1.}%
\]
Assume that $\mathbf{g}^{\left(  s\right)  },\ \mathbf{\phi}^{\left(
s\right)  },\mathbf{m}^{\left(  s\right)  }$ are defined for $s\leq k.$ Set%
\begin{align}
\mathbf{\xi}^{\left(  s\right)  }  & :=\mathbf{g}^{\left(  s\right)
}\mathbf{\phi}^{\left(  s\right)  },\ \mathbf{\eta}^{\left(  s\right)
}:=\mathbf{g}^{\left(  s\right)  T}\mathbf{\phi}^{\left(  s\right)
},\label{xik_etak_zetak_Def}\\
\mathbf{\zeta}^{\left(  s\right)  }  & :=\frac{\mathbf{\xi}^{\left(  s\right)
}+\mathbf{\eta}^{\left(  s\right)  }}{\sqrt{2}}=\overline{\mathbf{g}^{\left(
s\right)  }}\mathbf{\phi}^{\left(  s\right)  },\nonumber
\end{align}
and we write%
\begin{equation}
\mathcal{G}_{k}:=\sigma\left(  \mathbf{\xi}^{\left(  m\right)  },\mathbf{\eta
}^{\left(  m\right)  }:m\leq k\right)  .\label{Fk}%
\end{equation}
We will write $\mathbb{E}_{k}$ for the conditional expectation with respect to
to $\mathcal{G}_{k}.$ Remark that $\left\langle \mathbf{\phi}^{\left(
k\right)  },\mathbf{\xi}^{\left(  k\right)  }\right\rangle =\left\langle
\mathbf{\eta}^{\left(  k\right)  },\mathbf{\phi}^{\left(  k\right)
}\right\rangle .$

Put first%
\begin{equation}
\mathbf{h}^{\left(  k+1\right)  }:=h\mathbf{1}+\beta\sum\nolimits_{s=1}%
^{k-1}\gamma_{s}\mathbf{\zeta}^{\left(  s\right)  }+\beta\sqrt{q-\Gamma
_{k-1}^{2}}\mathbf{\zeta}^{\left(  k\right)  },\label{hk_Def}%
\end{equation}%
\begin{equation}
\mathbf{m}^{\left(  k+1\right)  }=\tanh\left(  \mathbf{h}^{\left(  k+1\right)
}\right)  .\label{mk_construktion}%
\end{equation}
We haven't defined $\mathbf{h}^{\left(  1\right)  },$ but we could put it
$\tanh^{-1}\left(  \sqrt{q}\right)  .$

We next define%
\begin{equation}
\mathbf{\phi}^{\left(  k+1\right)  }:=\frac{\mathbf{m}^{\left(  k+1\right)
}-\sum_{s=1}^{k}\left\langle \mathbf{m}^{\left(  k+1\right)  },\mathbf{\phi
}^{\left(  s\right)  }\right\rangle \mathbf{\phi}^{\left(  s\right)  }%
}{\left\Vert \mathbf{m}^{\left(  k+1\right)  }-\sum_{s=1}^{k}\left\langle
\mathbf{m}^{\left(  k+1\right)  },\mathbf{\phi}^{\left(  s\right)
}\right\rangle \mathbf{\phi}^{\left(  s\right)  }\right\Vert }%
.\label{phik_construction}%
\end{equation}
This requires that the denominator is $\neq0$ which is true with probability
$1$ (Lemma \ref{Le_denom}), assuming $N>k$. Finally%
\begin{equation}
\mathbf{g}^{\left(  k+1\right)  }:=\mathbf{g}^{\left(  k\right)
}-\mathbf{\rho}^{\left(  k\right)  },\label{gk_construction}%
\end{equation}
with%
\begin{equation}
\mathbf{\rho}^{\left(  k\right)  }:=\mathbf{\xi}^{\left(  k\right)  }%
\otimes\mathbf{\phi}^{\left(  k\right)  }+\mathbf{\phi}^{\left(  k\right)
}\otimes\mathbf{\eta}^{\left(  k\right)  }\mathbf{-}\left\langle \mathbf{\phi
}^{\left(  k\right)  },\mathbf{\xi}^{\left(  k\right)  }\right\rangle \left(
\mathbf{\phi}^{\left(  k\right)  }\otimes\mathbf{\phi}^{\left(  k\right)
}\right)  .\label{rho}%
\end{equation}

\begin{lemma}
\label{Le_basic1}

\begin{enumerate}
\item[a)] $\left\Vert \mathbf{\phi}^{\left(  k\right)  }\right\Vert =1$ for
all $k,$ and $\left\langle \mathbf{\phi}^{\left(  k\right)  },\mathbf{\phi
}^{\left(  t\right)  }\right\rangle =0$ for $k\neq t.$

\item[b)] For $s<k,$ one has $\mathbf{g}^{\left(  k\right)  }\mathbf{\phi
}^{\left(  s\right)  }=\mathbf{0}$, and $\mathbf{g}^{\left(  k\right)
T}\mathbf{\phi}^{\left(  s\right)  }=\mathbf{0}$.

\item[c)] $\mathbf{m}^{\left(  k\right)  }$ and $\mathbf{\phi}^{\left(
k\right)  }$ are $\mathcal{G}_{k-1}$-m.b. for all $k\geq1$.
\end{enumerate}
\end{lemma}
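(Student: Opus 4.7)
The plan is to prove the three parts simultaneously by induction on $k$. The base case $k=1$ is immediate: $\mathbf{\phi}^{(1)}=\mathbf{1}$ has unit norm and there is nothing else to be orthogonal to; part (b) is vacuous; and $\mathbf{\phi}^{(1)},\mathbf{m}^{(1)}$ are deterministic, hence trivially measurable with respect to any $\sigma$-field.

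For the inductive step, assume all three claims hold up to index $k$. Part (a) at level $k+1$ is built into (\ref{phik_construction}), which is precisely the Gram--Schmidt formula for the projection of $\mathbf{m}^{(k+1)}$ onto the orthogonal complement of the span of $\mathbf{\phi}^{(1)},\ldots,\mathbf{\phi}^{(k)}$ followed by normalization; the non-vanishing of the denominator is granted by Lemma \ref{Le_denom}.

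For part (b), the workhorse is the tensor-action identity $(\mathbf{a}\otimes\mathbf{b})\mathbf{c}=\langle\mathbf{b},\mathbf{c}\rangle\mathbf{a}$ from the notational preamble. Applied to $\mathbf{\rho}^{(k)}$ in (\ref{rho}), it yields, for any $s\leq k$,
\[
\mathbf{\rho}^{(k)}\mathbf{\phi}^{(s)}
= \langle\mathbf{\phi}^{(k)},\mathbf{\phi}^{(s)}\rangle\,\mathbf{\xi}^{(k)}
+ \langle\mathbf{\eta}^{(k)},\mathbf{\phi}^{(s)}\rangle\,\mathbf{\phi}^{(k)}
- \langle\mathbf{\phi}^{(k)},\mathbf{\xi}^{(k)}\rangle\langle\mathbf{\phi}^{(k)},\mathbf{\phi}^{(s)}\rangle\,\mathbf{\phi}^{(k)}.
\]
For $s=k$, using $\|\mathbf{\phi}^{(k)}\|^{2}=1$ together with the remark $\langle\mathbf{\phi}^{(k)},\mathbf{\xi}^{(k)}\rangle=\langle\mathbf{\eta}^{(k)},\mathbf{\phi}^{(k)}\rangle$ stated after (\ref{Fk}), this collapses to $\mathbf{\rho}^{(k)}\mathbf{\phi}^{(k)}=\mathbf{\xi}^{(k)}=\mathbf{g}^{(k)}\mathbf{\phi}^{(k)}$, so (\ref{gk_construction}) gives $\mathbf{g}^{(k+1)}\mathbf{\phi}^{(k)}=\mathbf{0}$. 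For $s<k$, the first and third terms vanish by part (a) at level $k$, while the middle coefficient satisfies $\langle\mathbf{\eta}^{(k)},\mathbf{\phi}^{(s)}\rangle=\langle\mathbf{g}^{(k)T}\mathbf{\phi}^{(k)},\mathbf{\phi}^{(s)}\rangle=\langle\mathbf{\phi}^{(k)},\mathbf{g}^{(k)}\mathbf{\phi}^{(s)}\rangle=0$ by the inductive hypothesis (b) at level $k$. Combined with $\mathbf{g}^{(k)}\mathbf{\phi}^{(s)}=\mathbf{0}$ this yields $\mathbf{g}^{(k+1)}\mathbf{\phi}^{(s)}=\mathbf{0}$. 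The transposed statement $\mathbf{g}^{(k+1)T}\mathbf{\phi}^{(s)}=\mathbf{0}$ follows by the symmetric computation, using that $\mathbf{\rho}^{(k)T}$ has the same tensorial shape with $\mathbf{\xi}^{(k)}$ and $\mathbf{\eta}^{(k)}$ swapped.

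For part (c), observe that $\mathbf{\zeta}^{(s)}$ is $\mathcal{G}_{s}$-measurable by (\ref{xik_etak_zetak_Def}) and (\ref{Fk}); hence $\mathbf{h}^{(k+1)}$ in (\ref{hk_Def}), being a deterministic linear combination of $\mathbf{\zeta}^{(1)},\ldots,\mathbf{\zeta}^{(k)}$, is $\mathcal{G}_{k}$-measurable, and so is $\mathbf{m}^{(k+1)}=\tanh(\mathbf{h}^{(k+1)})$. Then $\mathbf{\phi}^{(k+1)}$ is a measurable function of $\mathbf{m}^{(k+1)}$ and of $\mathbf{\phi}^{(1)},\ldots,\mathbf{\phi}^{(k)}$ (which are $\mathcal{G}_{k-1}$-measurable by the inductive hypothesis), hence $\mathcal{G}_{k}$-measurable. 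No single step is hard; the only real care required is the interleaving of the three claims in the induction, since establishing part (b) at level $k+1$ simultaneously consumes part (a) at level $k$ (for $\langle\mathbf{\phi}^{(k)},\mathbf{\phi}^{(s)}\rangle=0$) and part (b) at level $k$ (for $\langle\mathbf{\phi}^{(k)},\mathbf{g}^{(k)}\mathbf{\phi}^{(s)}\rangle=0$), so the three statements must be advanced jointly rather than in sequence.
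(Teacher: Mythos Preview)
Your proof is correct and follows essentially the same route as the paper's: part (a) is immediate from the Gram--Schmidt form of (\ref{phik_construction}); part (b) is the same tensor computation of $\mathbf{\rho}^{(k)}\mathbf{\phi}^{(s)}$ split into the cases $s=k$ and $s<k$; and part (c) follows because $\mathbf{h}^{(k+1)}$ is built from $\mathbf{\zeta}^{(1)},\ldots,\mathbf{\zeta}^{(k)}$. The only cosmetic difference is that you package all three claims into one explicit joint induction, whereas the paper treats them as separate items (and indexes the induction for (b) at level $k$ rather than $k+1$); the substance is identical.
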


\begin{proof}
a) is evident by the definition.

b) We use induction on $k.$ For $k=1,$ there is nothing to prove. For $k=2,$
one just has to check that $\mathbf{g}^{\left(  2\right)  }\mathbf{1=0}%
$,\ $\mathbf{g}^{\left(  2\right)  T}\mathbf{1}=0,$ which are straightforward.
So, we assume $k\geq3.$ If $s=k-1$, using $\left\langle \mathbf{\phi}^{\left(
k-1\right)  },\mathbf{\phi}^{\left(  k-1\right)  }\right\rangle =1$%
\begin{align*}
\mathbf{g}^{\left(  k\right)  }\mathbf{\phi}^{\left(  k-1\right)  }  &
=\mathbf{g}^{\left(  k-1\right)  }\mathbf{\phi}^{\left(  k-1\right)
}-\mathbf{\rho}^{\left(  k-1\right)  }\mathbf{\phi}^{\left(  k-1\right)  }\\
& =\mathbf{\xi}^{\left(  k-1\right)  }-\mathbf{\xi}^{\left(  k-1\right)
}-\left\langle \mathbf{\eta}^{\left(  k-1\right)  },\mathbf{\phi}^{\left(
k-1\right)  }\right\rangle \mathbf{\phi}^{\left(  k-1\right)  }\\
& +\left\langle \mathbf{\eta}^{\left(  k-1\right)  },\mathbf{\phi}^{\left(
k-1\right)  }\right\rangle \mathbf{\phi}^{\left(  k-1\right)  }\\
& =0.
\end{align*}
If $s\leq k-2,$ we have by induction%
\[
\mathbf{g}^{\left(  k\right)  }\mathbf{\phi}^{\left(  s\right)  }%
=-\mathbf{\rho}^{\left(  k-1\right)  }\mathbf{\phi}^{\left(  s\right)  },
\]
and using $\left\langle \mathbf{\phi}^{\left(  k-1\right)  },\mathbf{\phi
}^{\left(  s\right)  }\right\rangle =0,$ and again induction, we have%
\begin{align*}
\mathbf{\rho}^{\left(  k-1\right)  }\mathbf{\phi}^{\left(  s\right)  }  &
=\mathbf{\xi}^{\left(  k-1\right)  }\left\langle \mathbf{\phi}^{\left(
k-1\right)  },\mathbf{\phi}^{\left(  s\right)  }\right\rangle +\mathbf{\phi
}^{\left(  k-1\right)  }\left\langle \mathbf{\phi}^{\left(  k-1\right)
},\mathbf{g}^{\left(  k-1\right)  }\mathbf{\phi}^{\left(  s\right)
}\right\rangle \\
& -\left\langle \mathbf{\phi}^{\left(  k-1\right)  },\mathbf{g}^{\left(
k-1\right)  }\mathbf{\phi}^{\left(  k-1\right)  }\right\rangle \left\langle
\mathbf{\phi}^{\left(  k-1\right)  },\mathbf{\phi}^{\left(  s\right)
}\right\rangle \mathbf{\phi}^{\left(  k-1\right)  }\\
& =0.
\end{align*}
$\mathbf{\phi}^{\left(  s\right)  }\mathbf{g}^{\left(  k\right)  }=\mathbf{0}$
is proved similarly.

c) It suffices to check that for $\mathbf{m}^{\left(  k\right)  }.$ As
$\mathbf{\zeta}^{\left(  s\right)  }$ is $\mathcal{G}_{s}$-m.b. for $s\leq
k-1, $ the claim follows.
\end{proof}

The motivation for the construction of $\mathbf{g}^{\left(  k\right)  }$ in
the form given in (\ref{gk_construction}) is the following

\begin{proposition}
\label{Prop_main_gk}

\begin{enumerate}
\item[a)] Conditionally on $\mathcal{G}_{k-2},\ \mathbf{g}^{\left(  k\right)
} $ and $\mathbf{g}^{\left(  k-1\right)  }$ are Gaussian. The conditional
covariances of $\mathbf{g}^{\left(  k\right)  }$ given $\mathcal{G}_{k-2}$ are
given by%
\begin{equation}
\mathbb{E}_{k-2}\left(  g_{ij}^{\left(  k\right)  }g_{st}^{\left(  k\right)
}\right)  =\frac{1}{N}\left[  \delta_{is}-\alpha_{is}^{\left(  k-1\right)
}\right]  \left[  \delta_{jt}-\alpha_{jt}^{\left(  k-1\right)  }\right]
,\label{g_covariances}%
\end{equation}
with the abbreviation%
\[
\alpha_{ij}^{\left(  m\right)  }:=\frac{1}{N}\sum\nolimits_{r=1}^{m}\phi
_{i}^{\left(  r\right)  }\phi_{j}^{\left(  r\right)  }.
\]
(By Lemma \ref{Le_basic1} c), $\alpha^{\left(  k-1\right)  }$ is
$\mathcal{G}_{k-2}$-m.b.)

\item[b)] Conditionally on $\mathcal{G}_{k-2},$ $\mathbf{g}^{\left(  k\right)
}$ is independent of $\mathcal{G}_{k-1}.$

\item[c)] The variables $\mathbf{\zeta}^{\left(  k\right)  }$ are
conditionally Gaussian, given $\mathcal{G}_{k-1}$ with covariances%
\begin{equation}
\mathbb{E}_{k-1}\zeta_{i}^{\left(  k\right)  }\zeta_{j}^{\left(  k\right)
}=\delta_{ij}+\frac{1}{N}\phi_{i}^{\left(  k\right)  }\phi_{j}^{\left(
k\right)  }-\alpha_{ij}^{\left(  k-1\right)  }\label{zetaij_Cov}%
\end{equation}

\end{enumerate}
\end{proposition}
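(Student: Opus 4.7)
The plan is to prove (a), (b), (c) simultaneously by induction on $k$, organized around one central identification. The starting observation is that the definitions (\ref{gk_construction})--(\ref{rho}) can be rewritten as
\[
\mathbf{g}^{(k)} = Q_{k-1}\,\mathbf{g}^{(k-1)}\,Q_{k-1},
\]
where $Q_{k-1}:=I-\boldsymbol{\phi}^{(k-1)}\otimes\boldsymbol{\phi}^{(k-1)}$. By the identity $(\mathbf{a}\otimes\mathbf{b})\mathbf{c}=\langle\mathbf{b},\mathbf{c}\rangle\mathbf{a}$ noted in the introduction, $Q_{k-1}$ is the orthogonal projection in $\langle\cdot,\cdot\rangle$ onto the complement of $\boldsymbol{\phi}^{(k-1)}$. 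To verify the identification, set $P_{k-1}:=\boldsymbol{\phi}^{(k-1)}\otimes\boldsymbol{\phi}^{(k-1)}$, expand $(I-P_{k-1})\mathbf{g}^{(k-1)}(I-P_{k-1})$, and use the elementary identities $P_{k-1}\mathbf{g}^{(k-1)}=\boldsymbol{\phi}^{(k-1)}\otimes\boldsymbol{\eta}^{(k-1)}$, $\mathbf{g}^{(k-1)}P_{k-1}=\boldsymbol{\xi}^{(k-1)}\otimes\boldsymbol{\phi}^{(k-1)}$, $P_{k-1}\mathbf{g}^{(k-1)}P_{k-1}=\langle\boldsymbol{\phi}^{(k-1)},\boldsymbol{\xi}^{(k-1)}\rangle(\boldsymbol{\phi}^{(k-1)}\otimes\boldsymbol{\phi}^{(k-1)})$. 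By Lemma \ref{Le_basic1}c), $Q_{k-1}$ is $\mathcal{G}_{k-2}$-measurable. The base case $k=2$ is immediate since $\boldsymbol{\phi}^{(1)}=\mathbf{1}$ is deterministic and $\mathbf{g}=\mathbf{g}^{(1)}$ has i.i.d.\ $\mathcal{N}(0,1/N)$ entries.

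For the inductive step, assume (a) and (b) at level $k-1$. Conditionally on $\mathcal{G}_{k-2}$, $\mathbf{g}^{(k-1)}$ is then Gaussian with covariance $\tfrac{1}{N}(I-\alpha^{(k-2)})_{is}(I-\alpha^{(k-2)})_{jt}$, and since $Q_{k-1}$ is $\mathcal{G}_{k-2}$-measurable, $\mathbf{g}^{(k)}$ is conditionally Gaussian with covariance
\[
\mathbb{E}_{k-2}(g_{ij}^{(k)}g_{st}^{(k)})=\tfrac{1}{N}[Q_{k-1}(I-\alpha^{(k-2)})Q_{k-1}]_{is}\,[Q_{k-1}(I-\alpha^{(k-2)})Q_{k-1}]_{jt}.
\]
To reduce this to the claimed form, observe that by Lemma \ref{Le_basic1}a) the $\boldsymbol{\phi}^{(r)}$ are pairwise orthogonal in $\langle\cdot,\cdot\rangle$, whence $P_rP_{k-1}=0$ for $r\le k-2$, and therefore $\alpha^{(k-2)}P_{k-1}=P_{k-1}\alpha^{(k-2)}=0$. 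Combined with $Q_{k-1}^2=Q_{k-1}$ this yields the central algebraic identity $Q_{k-1}(I-\alpha^{(k-2)})Q_{k-1}=Q_{k-1}-\alpha^{(k-2)}=I-\alpha^{(k-1)}$, which is (a) at level $k$.

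For part (b), all three of $\mathbf{g}^{(k)}$, $\boldsymbol{\xi}^{(k-1)}$, $\boldsymbol{\eta}^{(k-1)}$ are linear in $\mathbf{g}^{(k-1)}$ with $\mathcal{G}_{k-2}$-measurable coefficients, hence jointly conditionally Gaussian given $\mathcal{G}_{k-2}$; it suffices to show the conditional cross-covariance vanishes. A direct computation factors $\mathbb{E}_{k-2}(g_{ij}^{(k)}\xi_l^{(k-1)})$ into a product one of whose factors is $\boldsymbol{\phi}^{(k-1)T}Q_{k-1}(I-\alpha^{(k-2)})$, which vanishes because $Q_{k-1}\boldsymbol{\phi}^{(k-1)}=\mathbf{0}$ (recall $\boldsymbol{\phi}^{(k-1)}$ is a unit vector in $\langle\cdot,\cdot\rangle$). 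Since $\mathcal{G}_{k-1}=\mathcal{G}_{k-2}\vee\sigma(\boldsymbol{\xi}^{(k-1)},\boldsymbol{\eta}^{(k-1)})$, this gives (b). Part (c) then follows by unpacking $\boldsymbol{\zeta}^{(k)}=\overline{\mathbf{g}^{(k)}}\boldsymbol{\phi}^{(k)}$: by (a) and (b) at level $k$, the $\mathcal{G}_{k-1}$-conditional law of $\mathbf{g}^{(k)}$ is Gaussian with the covariance from (a); $\boldsymbol{\phi}^{(k)}$ is $\mathcal{G}_{k-1}$-measurable; and the covariance computation uses $(I-\alpha^{(k-1)})\boldsymbol{\phi}^{(k)}=\boldsymbol{\phi}^{(k)}$ (orthogonality of the $\boldsymbol{\phi}^{(r)}$'s again) to produce both the $(I-\alpha^{(k-1)})_{ij}$ contribution from the autocovariances of $\boldsymbol{\xi}^{(k)},\boldsymbol{\eta}^{(k)}$ and the $\tfrac{1}{N}\phi_i^{(k)}\phi_j^{(k)}$ contribution from their cross-covariance.

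The main obstacle is essentially bookkeeping: keeping the $1/N$-normalization of $\langle\cdot,\cdot\rangle$ consistent with ordinary matrix products, and in particular establishing the identity $Q_{k-1}(I-\alpha^{(k-2)})Q_{k-1}=I-\alpha^{(k-1)}$, which is the algebraic engine of the entire argument. Once the projector interpretation $\mathbf{g}^{(k)}=Q_{k-1}\mathbf{g}^{(k-1)}Q_{k-1}$ is identified, all three parts become routine Gaussian conditioning.
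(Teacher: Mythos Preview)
Your proof is correct and takes a genuinely different, more conceptual route than the paper's. The paper proceeds by direct entrywise covariance computation: it computes $\mathbb{E}_{k-1}\xi_i^{(k)}\xi_j^{(k)}$, $\mathbb{E}_{k-1}\xi_i^{(k)}\eta_j^{(k)}$, then $\mathbb{E}_{k-1}g_{ij}^{(k)}\xi_s^{(k)}$ and $\mathbb{E}_{k-1}\rho_{ij}^{(k)}\xi_s^{(k)}$ separately, shows they match to give the conditional independence, and finally computes $\mathbb{E}_{k-1}\rho_{ij}^{(k)}\rho_{st}^{(k)}$ explicitly and combines with (\ref{g_covariances}) at level $k$ to boost the formula to $k+1$. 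Your approach instead identifies the recursion (\ref{gk_construction})--(\ref{rho}) as the two-sided projector update $\mathbf{g}^{(k)}=Q_{k-1}\mathbf{g}^{(k-1)}Q_{k-1}$ and reduces the entire covariance induction to the single algebraic identity $Q_{k-1}(I-\alpha^{(k-2)})Q_{k-1}=I-\alpha^{(k-1)}$, which follows from orthogonality of the $\boldsymbol{\phi}^{(r)}$ and idempotence of $Q_{k-1}$. The conditional independence (b) then drops out immediately from $Q_{k-1}\boldsymbol{\phi}^{(k-1)}=0$ rather than from a term-by-term cancellation.

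What your approach buys is transparency: the tensor-product covariance structure in (\ref{g_covariances}) is seen to be exactly what two-sided multiplication by a fixed matrix does to a Gaussian array, and the \emph{reason} the projector update preserves this form is the commutativity $P_{k-1}\alpha^{(k-2)}=0$ coming from Lemma~\ref{Le_basic1}a). The paper's computation obtains the same conclusions but obscures this structure behind several pages of index manipulation; in return it is completely self-contained and requires no matrix-algebraic interpretation. One small remark: in your cross-covariance computation for (b), the factor that vanishes is more precisely $Q_{k-1}(I-\alpha^{(k-2)})\boldsymbol{\phi}^{(k-1)}$ (arising on the column side), which equals zero because $Q_{k-1}$ and $I-\alpha^{(k-2)}$ commute and $Q_{k-1}\boldsymbol{\phi}^{(k-1)}=0$; your phrasing via $\boldsymbol{\phi}^{(k-1)T}Q_{k-1}(I-\alpha^{(k-2)})$ is equivalent by symmetry but it is worth stating the commutativity explicitly.
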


\begin{proof}
We use the following induction scheme to prove a) and b):

\begin{itemize}
\item[(i)] We assume that the statements a), b) are correct for $k.$

\item[(ii)] b) implies trivially that $\mathbf{g}^{\left(  k\right)  }$ is
Gaussian conditionally on $\mathcal{G}_{k-1}.$ So, this part of a) for $k+1$
is already settled.

\item[(iii)] As $\mathbf{\phi}^{\left(  k\right)  }$ is $\mathcal{G}_{k-1}%
$-m.b., it follows that $\mathbf{\xi}^{\left(  k\right)  },\mathbf{\eta
}^{\left(  k\right)  }$ are Gaussian, conditionally on $\mathcal{G}_{k-1},$
simply because they are linear combinations of the $g_{ij}^{\left(  k\right)
} $ with coefficients which are $\mathcal{G}_{k-1}$-m.b.

\item[(iv)] From the form of $\mathbf{\rho}^{\left(  k\right)  },$ it then
follows that it is also Gaussian, conditionally on $\mathcal{G}_{k-1},$ and
therefore, $\mathbf{g}^{\left(  k+1\right)  }$ is Gaussian, conditionally on
$\mathcal{G}_{k-1}.$

\item[(v)] The rest is just a covariance check: In order to prove that
$\mathbf{g}^{\left(  k+1\right)  }$ is independent of $\mathcal{G}_{k}%
=\sigma\left(  \mathcal{G}_{k-1},\mathbf{\xi}^{\left(  k\right)
},\mathbf{\eta}^{\left(  k\right)  }\right)  ,$ we have to check that the
conditional covariances between $\mathbf{g}^{\left(  k+1\right)  }$ and
$\mathbf{\xi}^{\left(  k\right)  }$ given $\mathcal{G}_{k-1},$ and between
$\mathbf{g}^{\left(  k+1\right)  }$ and $\mathbf{\eta}^{\left(  k\right)  },$
vanish, which in fact heavily uses (\ref{g_covariances}) for $k$. Finally we
have to boost this formula to $k+1.$
\end{itemize}

We first have the compute the conditional covariances among the $\xi^{\left(
k\right)  }$'s and $\eta^{\left(  k\right)  }$'s.
\begin{align}
\mathbb{E}_{k-1}\xi_{i}^{\left(  k\right)  }\xi_{j}^{\left(  k\right)  }  &
=\sum_{s,t}\phi_{s}^{\left(  k\right)  }\phi_{t}^{\left(  k\right)
}\mathbb{E}_{k-1}g_{is}^{\left(  k\right)  }g_{jt}^{\left(  k\right)
}\nonumber\\
& =\frac{1}{N}\sum_{s,t}\phi_{s}^{\left(  k\right)  }\phi_{t}^{\left(
k\right)  }\left[  \delta_{ij}-\alpha_{ij}^{\left(  k-1\right)  }\right]
\left[  \delta_{st}-\alpha_{st}^{\left(  k-1\right)  }\right] \label{xi_cov}\\
& =\delta_{ij}-\alpha_{ij}^{\left(  k-1\right)  },\nonumber
\end{align}
and symmetrically the same for $\mathbb{E}_{k-1}\eta_{i}^{\left(  k\right)
}\eta_{j}^{\left(  k\right)  }.$%
\begin{align}
\mathbb{E}_{k-1}\xi_{i}^{\left(  k\right)  }\eta_{j}^{\left(  k\right)  }  &
=\sum_{s,t}\phi_{s}^{\left(  k\right)  }\phi_{t}^{\left(  k\right)
}\mathbb{E}_{k-1}g_{is}^{\left(  k\right)  }g_{tj}^{\left(  k\right)
}\nonumber\\
& =\sum_{s,t}\phi_{s}^{\left(  k\right)  }\phi_{t}^{\left(  k\right)  }%
\frac{1}{N}\left[  \delta_{it}-\alpha_{it}^{\left(  k-1\right)  }\right]
\left[  \delta_{sj}-\alpha_{sj}^{\left(  k-1\right)  }\right]
\label{xi_eta_cov}\\
& =\frac{1}{N}\phi_{i}^{\left(  k\right)  }\phi_{j}^{\left(  k\right)
}\nonumber
\end{align}

Let's next check that the covariances between $\mathbf{g}^{\left(  k+1\right)
}$ and $\mathbf{\xi}^{\left(  k\right)  }$ vanish:%
\[
\mathbb{E}_{k-1}g_{ij}^{\left(  k+1\right)  }\xi_{s}^{\left(  k\right)
}=\mathbb{E}_{k-1}g_{ij}^{\left(  k\right)  }\xi_{s}^{\left(  k\right)
}-\mathbb{E}_{k-1}\rho_{ij}^{\left(  k\right)  }\xi_{s}^{\left(  k\right)  }%
\]%
\begin{align*}
\mathbb{E}_{k-1}g_{ij}^{\left(  k\right)  }\xi_{s}^{\left(  k\right)  }  &
=\mathbb{E}_{k-1}g_{ij}^{\left(  k\right)  }\sum_{t}g_{st}^{\left(  k\right)
}\phi_{t}^{\left(  k\right)  }\\
& =\sum_{t}\phi_{t}^{\left(  k\right)  }\mathbb{E}_{k-1}g_{ij}^{\left(
k\right)  }g_{st}^{\left(  k\right)  }=\sum_{t}\phi_{t}^{\left(  k\right)
}\mathbb{E}_{k-2}g_{ij}^{\left(  k\right)  }g_{st}^{\left(  k\right)  }\\
& =\frac{1}{N}\left[  \delta_{is}-\alpha_{is}^{\left(  k-1\right)  }\right]
\sum_{t}\phi_{t}^{\left(  k\right)  }\left[  \delta_{jt}-\alpha_{jt}^{\left(
k-1\right)  }\right] \\
& =\frac{1}{N}\left[  \delta_{is}-\alpha_{is}^{\left(  k-1\right)  }\right]
\phi_{j}^{\left(  k\right)  }%
\end{align*}%
\begin{align*}
\mathbb{E}_{k-1}\rho_{ij}^{\left(  k\right)  }\xi_{s}^{\left(  k\right)  }  &
=\frac{1}{N}\phi_{j}^{\left(  k\right)  }\mathbb{E}_{k-1}\left(  \xi
_{s}^{\left(  k\right)  }\xi_{i}^{\left(  k\right)  }\right)  +\frac{1}{N}%
\phi_{i}^{\left(  k\right)  }\mathbb{E}_{k-1}\left(  \xi_{s}^{\left(
k\right)  }\eta_{j}^{\left(  k\right)  }\right) \\
& \mathbf{-}\phi_{i}^{\left(  k\right)  }\phi_{j}^{\left(  k\right)  }\frac
{1}{N^{2}}\sum_{u}\phi_{u}^{\left(  k\right)  }\mathbb{E}_{k-1}\xi
_{u}^{\left(  k\right)  }\xi_{s}^{\left(  k\right)  }\\
& =\frac{1}{N}\left(  \delta_{is}-\alpha_{is}^{\left(  k-1\right)  }\right)
\phi_{j}^{\left(  k\right)  }+\frac{1}{N}\phi_{i}^{\left(  k\right)  }\left(
\frac{1}{N}\phi_{s}^{\left(  k\right)  }\phi_{j}^{\left(  k\right)  }\right)
\\
& \mathbf{-}\phi_{i}^{\left(  k\right)  }\phi_{j}^{\left(  k\right)  }\frac
{1}{N^{2}}\sum_{u}\phi_{u}^{\left(  k\right)  }\left(  \delta_{us}-\alpha
_{us}^{\left(  k-1\right)  }\right) \\
& =\frac{1}{N}\left(  \delta_{is}-\alpha_{is}^{\left(  k-1\right)  }\right)
\phi_{j}^{\left(  k\right)  }.
\end{align*}
Therefore, $\mathbb{E}_{k-1}g_{ij}^{\left(  k\right)  }\xi_{s}^{\left(
k\right)  }=0,$ and similarly (and symmetrically) $\mathbb{E}_{k-1}%
g_{ij}^{\left(  k\right)  }\eta_{s}^{\left(  k\right)  }=0$ for all $i,j,s.$
So, this proves that the $\mathcal{G}_{k-1}$-conditional covariances between
$\mathbf{g}^{\left(  k+1\right)  }$ and $\left(  \mathbf{\xi}^{\left(
k\right)  },\mathbf{\eta}^{\left(  k\right)  }\right)  $ vanish which implies
that $\mathbf{g}^{\left(  k+1\right)  }$ is conditionally independent of
$\mathcal{G}_{k}$ given $\mathcal{G}_{k-1},$ as everything is conditionally Gaussian.

As a consequence, we also have%
\begin{equation}
\mathbb{E}_{k-1}\left(  \rho_{ij}^{\left(  k\right)  }g_{st}^{\left(
k\right)  }\right)  =0,\ \forall i,j,s,t.\label{rho_g_uncorr}%
\end{equation}

To finish the induction, it remains to prove the validity of
(\ref{g_covariances}) with $k$ replaced by $k+1.$ Using (\ref{rho_g_uncorr}),
one has%
\begin{align}
\mathbb{E}_{k-1}\left(  g_{ij}^{\left(  k+1\right)  }g_{st}^{\left(
k+1\right)  }\right)   & =\mathbb{E}_{k-1}\left(  \left[  g_{ij}^{\left(
k\right)  }-\rho_{ij}^{\left(  k\right)  }\right]  \left[  g_{st}^{\left(
k\right)  }-\rho_{st}^{\left(  k\right)  }\right]  \right) \label{g_k+1_cov1}%
\\
& =\mathbb{E}_{k-2}\left(  g_{ij}^{\left(  k\right)  }g_{st}^{\left(
k\right)  }\right)  +\mathbb{E}_{k-1}\left(  \rho_{ij}^{\left(  k\right)
}\rho_{st}^{\left(  k\right)  }\right)  .\nonumber
\end{align}%
\begin{align*}
\mathbb{E}_{k-1}\left(  \rho_{ij}^{\left(  k\right)  }\rho_{st}^{\left(
k\right)  }\right)   & =\frac{1}{N^{2}}\mathbb{E}_{k-1}\Bigg \{\left(  \xi
_{i}^{\left(  k\right)  }\phi_{j}^{\left(  k\right)  }+\phi_{i}^{\left(
k\right)  }\eta_{j}^{\left(  k\right)  }-\phi_{i}^{\left(  k\right)  }\phi
_{j}^{\left(  k\right)  }\frac{1}{N}\sum_{u}\phi_{u}^{\left(  k\right)  }%
\xi_{u}^{\left(  k\right)  }\right) \\
& \times\left(  \xi_{s}^{\left(  k\right)  }\phi_{t}^{\left(  k\right)  }%
+\phi_{s}^{\left(  k\right)  }\eta_{t}^{\left(  k\right)  }-\phi_{s}^{\left(
k\right)  }\phi_{t}^{\left(  k\right)  }\frac{1}{N}\sum_{u}\phi_{u}^{\left(
k\right)  }\xi_{u}^{\left(  k\right)  }\right)  \Bigg \}\\
& =\frac{1}{N^{2}}\phi_{j}^{\left(  k\right)  }\phi_{t}^{\left(  k\right)
}\left(  \delta_{is}-\alpha_{is}^{\left(  k-1\right)  }\right)  +\frac
{1}{N^{2}}\phi_{i}^{\left(  k\right)  }\phi_{s}^{\left(  k\right)  }\left(
\delta_{jt}-\alpha_{jt}^{\left(  k-1\right)  }\right) \\
& -\frac{1}{N^{3}}\phi_{i}^{\left(  k\right)  }\phi_{j}^{\left(  k\right)
}\phi_{s}^{\left(  k\right)  }\phi_{t}^{\left(  k\right)  }.
\end{align*}
Plugging that into (\ref{g_k+1_cov1}), and using (\ref{g_covariances}) for
$k,$ one gets it for $k+1.$ So, we have proved a) and b). c) follows from
(\ref{xi_cov}) and (\ref{xi_eta_cov}).
\end{proof}

\begin{lemma}
\label{Le_denom}For all $k,$ and $N>k$%
\[
\left\Vert \mathbf{m}^{\left(  k+1\right)  }-\sum\nolimits_{s=1}%
^{k}\left\langle \mathbf{m}^{\left(  k+1\right)  },\mathbf{\phi}^{\left(
s\right)  }\right\rangle \mathbf{\phi}^{\left(  s\right)  }\right\Vert
>0,\ \mathbb{P}-\mathrm{a.s.}%
\]

\end{lemma}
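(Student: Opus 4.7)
The plan is to prove the measure-zero statement by showing, after suitable conditioning, that the event $\{\mathbf{m}^{(k+1)} \in V_k\}$, with $V_k := \mathrm{span}\{\mathbf{\phi}^{(1)},\ldots,\mathbf{\phi}^{(k)}\}$, coincides with the vanishing locus of a non-trivial real-analytic function into which a Gaussian random variable is substituted.

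First I would condition on $\mathcal{G}_{k-1}$ together with the scalar $c := \langle \mathbf{\phi}^{(k)}, \mathbf{\zeta}^{(k)}\rangle$. By Lemma \ref{Le_basic1}(c), $V_k$ is $\mathcal{G}_{k-1}$-measurable, and (\ref{hk_Def}) gives $\mathbf{h}^{(k+1)} = \mathbf{a} + \lambda \mathbf{w}$, where $\lambda := \beta\sqrt{q-\Gamma_{k-1}^{2}}>0$ by Lemma \ref{Le_sequences&AT}(b), $\mathbf{a} := h\mathbf{1}+\beta\sum_{s<k}\gamma_{s}\mathbf{\zeta}^{(s)}+\lambda c\mathbf{\phi}^{(k)}$ is deterministic under the conditioning, and $\mathbf{w} := P_\perp \mathbf{\zeta}^{(k)}$ is the projection of $\mathbf{\zeta}^{(k)}$ onto $V_k^\perp$. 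Reading off Proposition \ref{Prop_main_gk}(c), the conditional covariance matrix $\Sigma$ of $\mathbf{\zeta}^{(k)}$ has $\mathbf{\phi}^{(r)}$ ($r<k$) as null eigenvectors, $\mathbf{\phi}^{(k)}$ as eigenvector with eigenvalue $2$, and every element of $V_k^\perp$ as eigenvector with eigenvalue $1$; a short covariance computation (using orthonormality of the $\mathbf{\phi}^{(s)}$) then shows that $c$ and $\mathbf{w}$ are conditionally uncorrelated, hence conditionally independent by joint Gaussianity. In particular, conditional on $\mathcal{G}_{k-1}$ and $c$, the random vector $\mathbf{w}$ has a full-rank Gaussian law on $V_k^\perp$, which has positive dimension $N-k$ and thus supports a positive Lebesgue density.

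It then suffices to prove that, for a.e.\ conditioning, the set
\[
S := \{\mathbf{w}\in V_k^\perp : P_\perp\tanh(\mathbf{a}+\lambda\mathbf{w}) = \mathbf{0}\}
\]
has Lebesgue measure zero in $V_k^\perp$. The map $F(\mathbf{w}) := P_\perp\tanh(\mathbf{a}+\lambda\mathbf{w})$ is real-analytic on the connected open set $V_k^\perp$; by the identity theorem for real-analytic functions (applied to any coordinate of $F$ that does not vanish identically), $S$ has Lebesgue measure zero as soon as $F \not\equiv 0$, and integrating out the conditioning gives the lemma.

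The main obstacle, and the substantive part of the argument, is to establish $F \not\equiv 0$. I would do this by a limit-at-infinity argument. Since $N>k$, pick any nonzero $\mathbf{e}\in V_k^\perp$. If $F\equiv 0$ on $V_k^\perp$, then $\tanh(\mathbf{a}+t\lambda\mathbf{e}) \in V_k$ for every $t \in \mathbb{R}$, so the limits
\[
\mathbf{v}_\pm := \lim_{t\to\pm\infty}\tanh(\mathbf{a}+t\lambda\mathbf{e})
\]
exist, and, $V_k$ being closed, belong to $V_k$; here $(\mathbf{v}_\pm)_i = \pm\,\mathrm{sgn}(e_i)$ when $e_i\neq 0$, and $(\mathbf{v}_\pm)_i = \tanh(a_i)$ when $e_i = 0$. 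Hence $\mathbf{v}_+ - \mathbf{v}_- \in V_k$, while a direct computation gives
\[
\langle \mathbf{v}_+-\mathbf{v}_-,\mathbf{e}\rangle = \frac{2}{N}\sum_{i:\,e_i\neq 0}|e_i| > 0,
\]
contradicting $\mathbf{e}\in V_k^\perp$. Therefore $F \not\equiv 0$, which closes the plan.
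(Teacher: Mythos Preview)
Your proof is correct, and it shares the paper's overall strategy: condition on $\mathcal{G}_{k-1}$ and use the conditional Gaussian law of $\mathbf{\zeta}^{(k)}$ from Proposition~\ref{Prop_main_gk}(c). The paper's proof, however, is very terse: it notes that the conditional covariance of $\mathbf{\zeta}^{(k)}$ has positive rank (stated there as $N-k$; your eigenvalue analysis gives $N-k+1$, but this discrepancy is harmless), deduces that some coordinate $m_i^{(k+1)}$ has a non-degenerate conditional distribution, and then simply asserts that ``this implies the claim''. Your argument is considerably more careful: by further conditioning on $c=\langle\mathbf{\phi}^{(k)},\mathbf{\zeta}^{(k)}\rangle$ you isolate a full-rank Gaussian on $V_k^\perp$, invoke the zero-set theorem for real-analytic functions, and then supply the piece the paper omits --- an explicit proof that $F=P_\perp\tanh(\mathbf{a}+\lambda\,\cdot\,)\not\equiv 0$ on $V_k^\perp$ via the limits $t\to\pm\infty$ along a direction $\mathbf{e}\in V_k^\perp$. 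This last step is the genuine addition: it is not immediate that non-degeneracy of a single coordinate prevents $\mathbf{m}^{(k+1)}$ from lying in a fixed $k$-dimensional subspace, and your limiting argument is a clean way to close that gap.
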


\begin{proof}
We use induction on $k.$ For $k=0,$ there is nothing to prove, and $k=1$ is
evident, so we assume $k\geq2,$ and that $\mathbf{\phi}^{\left(  s\right)
},\ s\leq k$ is well-defined, and we can use the covariance computation in
Proposition \ref{Prop_main_gk} c). We prove that%
\[
\mathbb{P}_{k-1}\left(  \left\Vert \mathbf{m}^{\left(  k+1\right)  }%
-\sum\nolimits_{s=1}^{k}\left\langle \mathbf{m}^{\left(  k+1\right)
},\mathbf{\phi}^{\left(  s\right)  }\right\rangle \mathbf{\phi}^{\left(
s\right)  }\right\Vert >0\right)  =1,\ \mathbb{P}-\mathrm{a.s.}%
\]
In the expression (\ref{hk_Def}) of $\mathbf{h}^{\left(  k+1\right)  },$ all
the entries are $\mathcal{G}_{k-1}$-m.b. except $\mathbf{\zeta}^{\left(
k\right)  },$ and $q-\Gamma_{k-1}^{2}>0.$ Therefore, conditionally on
$\mathcal{G}_{k-1},$ we have%
\[
m_{i}^{\left(  k+1\right)  }=\tanh\left(  x_{i}+\alpha\zeta_{i}^{\left(
k\right)  }\right)
\]
with $x_{i}\in\mathbb{R},\ \alpha>0.$ From (\ref{zetaij_Cov}), the conditional
distribution of $\mathbf{\zeta}^{\left(  k\right)  }$ is Gaussian with a
covariance matrix of rank $N-k.$ From that, it is immediate that $\mathbb{P}%
$-a.s. there exists $i\leq N$ with $m_{i}^{\left(  k+1\right)  }$ having a
non-degenerate conditional distribution under $\mathbb{P}_{k-1}.$ This implies
the claim.
\end{proof}

For the formulation of the next result, we introduce the following notation.
If $X_{N},Y_{N}$ are two sequences of random variables, depending possibly on
other parameters like $\beta,h,k$ etc., we write%
\[
X_{N}\simeq Y_{N}%
\]
if there exists a constant $C>0,$ depending possibly on these other
parameters, but not on $N,$ with%
\[
\mathbb{P}\left(  \left\vert X_{N}-Y_{N}\right\vert \geq t\right)  \leq
C\exp\left[  -Ct^{2}N\right]  .
\]
$X_{N}\simeq Y_{N}$ in particular implies $\left\Vert X_{N}-Y_{N}\right\Vert
_{p}\rightarrow0$ for every $p\geq1$ as $N\rightarrow\infty.$

\begin{proposition}
\label{Prop_inner_products}

\begin{enumerate}
\item[a)] For any $j<k,$ one has%
\begin{equation}
\left\langle \mathbf{m}^{\left(  k\right)  },\mathbf{\phi}^{\left(  j\right)
}\right\rangle \simeq\gamma_{j}.\label{ipr_m_phi}%
\end{equation}

\item[b)] For any $k$%
\begin{equation}
\left\Vert \mathbf{m}^{\left(  k\right)  }\right\Vert ^{2}\simeq
q,\label{ipr_mm1}%
\end{equation}
and for $j<k$%
\begin{equation}
\left\langle \mathbf{m}^{\left(  k\right)  },\mathbf{m}^{\left(  j\right)
}\right\rangle \simeq\rho_{j}.\label{ipr_mm2}%
\end{equation}

\end{enumerate}
\end{proposition}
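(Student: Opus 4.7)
I would prove (a) and (b) together by induction on $k$. The $k=2$ base case follows from Gaussian concentration applied to $\mathbf{\zeta}^{(1)} = \bar{\mathbf{g}}\mathbf{1}$: since $\mathbf{\phi}^{(1)} = \mathbf{1}$ and $\mathbf{m}^{(2)} = \tanh(h+\beta\sqrt{q}\,\mathbf{\zeta}^{(1)})$, the normalized sums concentrate to $E\tanh(h+\beta\sqrt{q}\,Z) = \gamma_1$ and $E\tanh^2(h+\beta\sqrt{q}\,Z) = q$ (via (\ref{fixed_point})). For the inductive step the two key tools are: conditional Gaussian concentration, available because Proposition \ref{Prop_main_gk} makes each $\mathbf{\zeta}^{(\ell)}$ conditionally Gaussian given $\mathcal{G}_{\ell-1}$ with operator-norm-$O(1)$ covariance, while every quantity of interest is a normalized sum of a smooth bounded function of the $h_i^{(\cdot)}$ and hence $O(1/\sqrt{N})$-Lipschitz in $\mathbf{\zeta}^{(\ell)}$ thanks to $\sum_i(\phi_i^{(s)})^2 = N$; and the variance-absorbing identity $\gamma_{\ell-1}^2 + (q-\Gamma_{\ell-1}^2) = q - \Gamma_{\ell-2}^2$, which merges $\beta\gamma_{\ell-1}Z' + \beta\sqrt{q-\Gamma_{\ell-1}^2}\,Z$ into a single $\beta\sqrt{q-\Gamma_{\ell-2}^2}\,\tilde{Z}$ once the conditional $\zeta_i^{(\ell-1)}$ has been replaced by an independent $N(0,1)$.

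Introducing the surrogate $\tilde{m}^{(\ell+1)}_i := E_Z\tanh(h+\beta\sum_{s\le\ell-1}\gamma_s\zeta_i^{(s)} + \beta\sqrt{q-\Gamma_{\ell-1}^2}\,Z)$, the two tools combine to give $\mathbb{E}_{\ell-1}m^{(\ell+1)}_i \simeq \tilde{m}^{(\ell+1)}_i$ and $\mathbb{E}_{\ell-2}\tilde{m}^{(\ell+1)}_i \simeq \tilde{m}^{(\ell)}_i$. For (a) with $j<k$ this produces the telescoping chain
\[
\langle\mathbf{m}^{(k+1)},\mathbf{\phi}^{(j)}\rangle \simeq \langle\tilde{\mathbf{m}}^{(k+1)},\mathbf{\phi}^{(j)}\rangle \simeq \cdots \simeq \langle\tilde{\mathbf{m}}^{(j+1)},\mathbf{\phi}^{(j)}\rangle \simeq \langle\mathbf{m}^{(j+1)},\mathbf{\phi}^{(j)}\rangle \simeq \gamma_j,
\]
whose last step is the inductive hypothesis and in which $\mathbf{\phi}^{(j)}$ is $\mathcal{G}_{j-1}$-measurable so that the conditional expectations pass through the inner product. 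The boundary case $j=k$ reduces to (b) via the Gram--Schmidt identity $\mathbf{\phi}^{(k)} \simeq (\mathbf{m}^{(k)} - \sum_{s<k}\gamma_s\mathbf{\phi}^{(s)})/\sqrt{q-\Gamma_{k-1}^2}$, obtained from Lemma \ref{Le_basic1}a) and the inductive hypothesis on $\langle\mathbf{m}^{(k)},\mathbf{\phi}^{(s)}\rangle$ and $\|\mathbf{m}^{(k)}\|^2$: expanding $\langle\mathbf{m}^{(k+1)},\mathbf{\phi}^{(k)}\rangle$ reduces it to $\langle\mathbf{m}^{(k+1)},\mathbf{m}^{(k)}\rangle$ plus terms already covered by (a), and the arithmetic $\gamma_k = (\rho_k - \Gamma_{k-1}^2)/\sqrt{q-\Gamma_{k-1}^2}$ closes the case once $\langle\mathbf{m}^{(k+1)},\mathbf{m}^{(k)}\rangle \simeq \rho_k$ is in hand.

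The heart of the argument is therefore (b) for the cross-correlations $\langle\mathbf{m}^{(k+1)},\mathbf{m}^{(\ell)}\rangle$ with $\ell\le k$. Iterating the two tools through $\mathbf{\zeta}^{(k)},\ldots,\mathbf{\zeta}^{(\ell)}$ (during which $\mathbf{m}^{(\ell)}$ factors out, being $\mathcal{G}_{\ell-1}$-measurable) reduces the problem to $\frac{1}{N}\sum_i \mathbb{E}_{\ell-2}[\tilde{m}^{(\ell+1)}_i m^{(\ell)}_i]$, where now both factors share the same $\zeta_i^{(\ell-1)}$. Replacing it by an independent $N(0,1)$ turns the two $\tanh$-arguments into jointly Gaussian variables with common variance $q-\Gamma_{\ell-2}^2$ and covariance $\gamma_{\ell-1}\sqrt{q-\Gamma_{\ell-2}^2} = \rho_{\ell-1}-\Gamma_{\ell-2}^2$; writing them as $\sqrt{\rho_{\ell-1}-\Gamma_{\ell-2}^2}\,W + \sqrt{q-\rho_{\ell-1}}\,V_j$ with $W,V_1,V_2$ independent and continuing to integrate out $\mathbf{\zeta}^{(\ell-2)},\ldots,\mathbf{\zeta}^{(1)}$ by the same mechanism absorbs the remaining variance of $\sum_{s<\ell-1}\gamma_s\zeta_i^{(s)}$ into $W$, producing
\[
E\operatorname*{Th}(\sqrt{\rho_{\ell-1}}W + \sqrt{q-\rho_{\ell-1}}V_1)\operatorname*{Th}(\sqrt{\rho_{\ell-1}}W + \sqrt{q-\rho_{\ell-1}}V_2) = \psi(\rho_{\ell-1}) = \rho_\ell,
\]
which is exactly the recursion defining $\rho_\ell$. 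The norm $\|\mathbf{m}^{(k+1)}\|^2$ admits a simpler parallel treatment (only one $\tanh$ is ever present, so no ``copy splitting'' occurs) and collapses directly to $E\tanh^2(h+\beta\sqrt{q}\,Z) = q$ via (\ref{fixed_point}).

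The main obstacle is bookkeeping rather than conceptual: at each iterate I have to verify the $O(1/\sqrt{N})$-Lipschitz estimate (ensured by $|\tanh'|\le 1$ and $\sum_i(\phi_i^{(s)})^2 = N$), control the additive $O(1/N)$ errors coming from the fact that $\mathbb{E}_{\ell-1}(\zeta_i^{(\ell)})^2 = 1 + \frac{1}{N}(\phi_i^{(\ell)})^2 - \alpha_{ii}^{(\ell-1)}$ rather than exactly $1$, and check that the $\sim k$ iterates do not destroy the $\exp(-ct^2 N)$ tails. For $k$ fixed all three concerns are harmless.
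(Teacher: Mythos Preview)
Your proposal is correct and follows essentially the same route as the paper: both arguments rest on iterated conditional Gaussian concentration (Proposition~\ref{Prop_main_gk} plus Lemma~\ref{Le_chi}), successively replacing each $\mathbf{\zeta}^{(s)}$ by an independent standard Gaussian and exploiting the variance-merging identity $\gamma_{\ell-1}^2+(q-\Gamma_{\ell-1}^2)=q-\Gamma_{\ell-2}^2$, until the cross term collapses to $\psi(\rho_{\ell-1})=\rho_\ell$. The only difference is organizational: the paper proves b) directly by this descent and then derives a) from b) via the Gram--Schmidt relation (citing \cite{BoTAP}, Lemma~2.7), whereas you run a joint induction on $k$, handling a) for $j<k$ by a telescoping reduction to the inductive hypothesis $\langle\mathbf{m}^{(j+1)},\mathbf{\phi}^{(j)}\rangle\simeq\gamma_j$ and the boundary case $j=k$ by the same Gram--Schmidt step.
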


\begin{proof}
This was proved in \cite{BoTAP}. The $\mathbf{m}^{\left(  k\right)  }$ there
were defined through the iteration (\ref{TAP_iteration}), and we proved that
these random variables can be approximated by the ones essentially given by
(\ref{mk_construktion}). However, we have here a slightly different version,
as our $\mathcal{G}_{k}$ are not the same as the $\mathcal{F}_{k}$ in
\cite{BoTAP}. Therefore, we give a sketch of the proof here again.

a) is a simple consequence of b), see \cite{BoTAP}, Lemma 2.7.

So, we prove b). We first prove (\ref{ipr_mm1}). (\ref{ipr_mm2}) will be
proved by a small modification of the argument. $k=1$ is trivial, and%
\[
m_{i}^{\left(  2\right)  }=\operatorname*{Th}\left(  \sqrt{q}\zeta
_{i}^{\left(  1\right)  }\right)  ,
\]
and then (\ref{ipr_mm1}) follows from the LLN and the fixed point equation for
$q.$ So, we assume $k\geq3.$ We have%
\[
m_{i}^{\left(  k\right)  }=\operatorname*{Th}\left(  \sum\nolimits_{s=1}%
^{k-2}\gamma_{s}\zeta_{i}^{\left(  s\right)  }+\sqrt{q-\Gamma_{k-2}^{2}}%
\zeta_{i}^{\left(  k-1\right)  }\right)
\]
We observe that $\operatorname*{Th}\left(  x+\cdot\right)  $ is Lipshitz
continuous with $\left\Vert \operatorname*{Th}\left(  x+\cdot\right)
\right\Vert _{\mathrm{lip}}=\max\left(  1,\beta\right)  $ for any
$x\in\mathbb{R}$. We consider now the conditional distribution of
$m_{i}^{\left(  k\right)  }$ with respect to $\mathcal{G}_{k-2}.$ The Lipshitz
norm of $x\longmapsto\operatorname*{Th}\left(  \sum\nolimits_{s=1}^{k-2}%
\gamma_{s}\zeta_{i}^{\left(  s\right)  }+\sqrt{q-\Gamma_{k-2}^{2}}x\right)  $
is $\max\left(  1,\beta\sqrt{q-\Gamma_{k-2}^{2}}\right)  .$ As
$\operatorname*{Th}$ is bounded by $1,$ we have that the Lipshitz norm of
$x\longmapsto\operatorname*{Th}^{2}\left(  \sum\nolimits_{s=1}^{k-2}\gamma
_{s}\zeta_{i}^{\left(  s\right)  }+\sqrt{q-\Gamma_{k-2}^{2}}x\right)  $ is
bounded by $2\max\left(  1,\beta\sqrt{q-\Gamma_{k-2}^{2}}\right)  .$ Applying
Lemma \ref{Le_chi}, and the conditional covariances of $\mathbf{\zeta
}^{\left(  k-1\right)  }$ given in Proposition \ref{Prop_main_gk} above, we
obtain%
\begin{multline*}
\mathbb{P}_{k-2}\left(  \left\vert \frac{1}{N}\sum_{i=1}^{N}\left[
m_{i}^{\left(  k\right)  2}-E\operatorname*{Th}\nolimits^{2}\left(
\sum\nolimits_{s=1}^{k-2}\gamma_{s}\zeta_{i}^{\left(  s\right)  }%
+\sqrt{q-\Gamma_{k-2}^{2}}Z_{k-1}\right)  \right]  \right\vert \geq t\right)
\\
\leq C\exp\left[  -Ct^{2}N\right]  ,
\end{multline*}
where $C$ depends on $k,\beta,h,$ but is non-random, as the bound in Lemma
\ref{Le_chi} depends only on the the Lipshitz constant, and the other parameters.

We proceed in this way, replacing $\zeta_{i}^{\left(  s\right)  },\ s\leq k-2$
successively by $Z_{k-2},Z_{k-3},\ldots,Z_{1},$ condition first on
$\mathcal{G}_{k-3},$ etc. This finally leads to%
\[
\left\Vert \mathbf{m}^{\left(  k\right)  }\right\Vert ^{2}\simeq
E\operatorname*{Th}\nolimits^{2}\left(  \sum\nolimits_{s=1}^{k-2}\gamma
_{s}Z_{s}+\sqrt{q-\Gamma_{k-2}^{2}}Z_{k-1}\right)  =q.
\]

(\ref{ipr_mm2}) follows by a straightforward modification: The case $j=1$ is
trivial, and so we assume $j\geq2.$ As $j<k,$ the conditioning on
$\mathcal{G}_{k-2}$ fixes $\mathbf{m}^{\left(  j\right)  },$ and we therefore
get in the first step%
\begin{multline*}
\mathbb{P}_{k-2}\left(  \left\vert \frac{1}{N}\sum_{i=1}^{N}\left[
m_{i}^{\left(  j\right)  }m_{i}^{\left(  k\right)  }-m_{i}^{\left(  j\right)
}E\operatorname*{Th}\left(  \sum_{s=1}^{k-2}\gamma_{s}\zeta_{i}^{\left(
s\right)  }+\sqrt{q-\Gamma_{k-2}^{2}}Z_{k-1}\right)  \right]  \right\vert \geq
t\right) \\
\leq C\exp\left[  -Ct^{2}N\right]  .
\end{multline*}
This replacement, we do up to replacing $\mathbf{\zeta}^{\left(  j\right)  }$
which is $\mathcal{G}_{j}$-m.b. whereas $\mathbf{m}^{\left(  j\right)  }$ is
$\mathcal{G}_{j-1}$-m.b. We therefore obtain%
\begin{align*}
\frac{1}{N}\sum_{i=1}^{N}m_{i}^{\left(  j\right)  }m_{i}^{\left(  k\right)  }
& \simeq\frac{1}{N}\sum_{i=1}^{N}m_{i}^{\left(  j\right)  }E\operatorname*{Th}%
\left(  \sum_{s=1}^{j-1}\gamma_{s}\zeta_{i}^{\left(  s\right)  }+\sum
_{s=j}^{k-2}\gamma_{s}Z_{s}+\sqrt{q-\Gamma_{k-2}^{2}}Z_{k-1}\right) \\
& =\frac{1}{N}\sum_{i=1}^{N}m_{i}^{\left(  j\right)  }E\operatorname*{Th}%
\left(  \sum_{s=1}^{j-1}\gamma_{s}\zeta_{i}^{\left(  s\right)  }%
+\sqrt{q-\Gamma_{j-1}^{2}}Z_{j}\right)  .
\end{align*}
Performing this conditioning argument now with respect to $\mathcal{G}_{j-2},
$ we get first%
\begin{align*}
\frac{1}{N}\sum_{i=1}^{N}m_{i}^{\left(  j\right)  }m_{i}^{\left(  k\right)  }
& \simeq\frac{1}{N}\sum_{i=1}^{N}E\Big [\operatorname*{Th}\left(
\sum\nolimits_{s=1}^{j-2}\gamma_{s}\zeta_{i}^{\left(  s\right)  }%
+\sqrt{q-\Gamma_{j-2}^{2}}Z_{j-1}\right) \\
& \times\operatorname*{Th}\left(  \sum\nolimits_{s=1}^{j-2}\gamma_{s}\zeta
_{i}^{\left(  s\right)  }+\gamma_{j-1}Z_{j-1}+\sqrt{q-\Gamma_{j-1}^{2}}%
Z_{j}\right)  \Big ],
\end{align*}
and now in the same way as for (\ref{ipr_mm1})%
\begin{align*}
\frac{1}{N}\sum_{i=1}^{N}m_{i}^{\left(  j\right)  }m_{i}^{\left(  k\right)  }
& \simeq E\Big [\operatorname*{Th}\left(  \sum\nolimits_{s=1}^{j-2}\gamma
_{s}Z_{s}+\sqrt{q-\Gamma_{j-2}^{2}}Z_{j-1}\right) \\
& \times\operatorname*{Th}\left(  \sum\nolimits_{s=1}^{j-2}\gamma_{s}%
Z_{s}+\gamma_{j-1}Z_{j-1}+\sqrt{q-\Gamma_{j-1}^{2}}Z_{j}\right)  \Big ].
\end{align*}
A simple computation, as in \cite{BoTAP} in the evaluation of (5.12) there,
shows that the right hand side equals $\psi\left(  \rho_{j-1}\right)
=\rho_{j}.$
\end{proof}

\begin{remark}
The argument given here is considerably simpler than the one in \cite{BoTAP}.
On one hand, this is due to the fact that we don't consider here the random
variables given by the iteration (\ref{TAP_iteration}). Also the explicit
representation of the conditional covariances of the $\mathbf{\zeta}$ is very helpful.
\end{remark}

\section{Estimates for the first and second conditional moments}

The two basic results are:

\begin{proposition}
\label{Prop_1st_moment}If $h>0$ and $\beta$ is small enough then%
\begin{equation}
\lim_{k\rightarrow\infty}\limsup_{N\rightarrow\infty}\mathbb{E}\left\vert
\frac{1}{N}\log\mathbb{E}_{k}\left(  Z_{N}\right)  -\operatorname*{RS}\left(
\beta,h\right)  \right\vert =0.\label{first_moment}%
\end{equation}

\end{proposition}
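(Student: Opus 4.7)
The plan is to implement the tilting from the outline. Set $\mathbf{h}:=\mathbf{h}^{(k+1)}$ and let $p(\sigma):=\prod_i \frac{e^{h_i\sigma_i}}{2\cosh h_i}$ be the product Bernoulli measure on $\Sigma_N$, which is $\mathcal{G}_k$-measurable (Lemma \ref{Le_basic1} c)) and has mean $\mathbf{m}^{(k+1)}=\tanh(\mathbf{h})$. Writing $Z_N=\prod_i\cosh(h_i)\cdot E_p[\exp(H(\sigma)-N\langle\mathbf{h},\sigma\rangle)]$ and pulling the $\mathcal{G}_k$-measurable factors out of $\mathbb{E}_k$ yields
\begin{equation*}
\tfrac{1}{N}\log\mathbb{E}_k(Z_N)=\tfrac{1}{N}\sum_{i=1}^N\log\cosh(h_i^{(k+1)})+\tfrac{1}{N}\log E_p\!\bigl[\mathbb{E}_k e^{H(\sigma)-N\langle\mathbf{h},\sigma\rangle}\bigr].
\end{equation*}
It then suffices to prove $L^1$-convergence of the two summands to $E\log\cosh(h+\beta\sqrt{q}Z)$ and $\beta^2(1-q)^2/4$ respectively; their sum equals $\operatorname{RS}(\beta,h)$ by the fixed-point equation (\ref{fixed_point}).

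For the first summand I would reuse the iterative-conditioning argument from the proof of Proposition \ref{Prop_inner_products}: replacing $\zeta^{(s)}$ successively by independent Gaussians via Proposition \ref{Prop_main_gk} c) and the Lipschitz concentration of Lemma \ref{Le_chi}, together with the identity $\sum_{s=1}^{k-1}\gamma_s^2+(q-\Gamma_{k-1}^2)=q$, which gives $\frac1N\sum_i\log\cosh(h_i^{(k+1)})\simeq E\log\cosh(h+\beta\sqrt{q}Z)$. For the second summand, iterate (\ref{gk_construction}) to write $\mathbf{g}=\mathbf{g}^{(k+1)}+\sum_{s=1}^k\rho^{(s)}$; by Proposition \ref{Prop_main_gk}, $\mathbf{g}^{(k+1)}$ is $\mathcal{G}_k$-conditionally Gaussian with covariance (\ref{g_covariances}), and a direct computation using $\sigma^T(I-\alpha^{(k)})\sigma=N(1-\sum_{s\leq k}\langle\sigma,\phi^{(s)}\rangle^2)$ gives
\begin{equation*}
\mathbb{E}_k\bigl[e^{\frac{\beta}{\sqrt 2}\sigma^T g^{(k+1)}\sigma}\bigr]=\exp\Bigl[\tfrac{N\beta^2}{4}\bigl(1-\textstyle\sum_{s=1}^k\langle\sigma,\phi^{(s)}\rangle^2\bigr)^2\Bigr].
\end{equation*}
The remaining $\mathcal{G}_k$-measurable exponent $R_k(\sigma)$, assembled from the $\sigma^T\rho^{(s)}\sigma$ pieces and the linear term $-N\langle\mathbf{h}^{(k+1)}-h\mathbf{1},\sigma\rangle$, is precisely arranged (this is the motivation for the specific form of $\mathbf{h}^{(k+1)}$) as a sum of products of centered inner products $\langle\sigma,\phi^{(s)}\rangle-\langle\mathbf{m}^{(k+1)},\phi^{(s)}\rangle$ with bounded $\mathcal{G}_k$-measurable prefactors. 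Under $p$, each such inner product has Hoeffding tails $\exp(-cNt^2)$, and $\langle\mathbf{m}^{(k+1)},\phi^{(s)}\rangle\simeq\gamma_s$ by Proposition \ref{Prop_inner_products} a); hence on the $p$-typical set one has $\sum_{s\leq k}\langle\sigma,\phi^{(s)}\rangle^2\approx\Gamma_k^2$ and $R_k(\sigma)=o(N)$. Combining, $\frac1N\log E_p[\cdots]\to\beta^2(1-\Gamma_k^2)^2/4$ as $N\to\infty$, and then $\to\beta^2(1-q)^2/4$ as $k\to\infty$ by Lemma \ref{Le_sequences&AT} b).

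The main obstacle is controlling the contribution of $p$-atypical $\sigma$ on the exponential scale, since the Gaussian factor is of order $e^{cN}$: the atypical set must be shown to contribute $o(e^{cN})$ to $E_p[\cdots]$. For the Gaussian piece this follows by combining the conditional Hoeffding estimate for $\langle\sigma,\phi^{(s)}\rangle$ under $p$ with the crude deterministic bound $0\leq 1-\sum_{s\leq k}\langle\sigma,\phi^{(s)}\rangle^2\leq 1$; for $R_k(\sigma)$ one also needs a.s.\ boundedness of $|\langle\sigma,\zeta^{(s)}\rangle|$, which reduces to $\|\zeta^{(s)}\|^2\simeq 1-\Gamma_{s-1}^2$ via Proposition \ref{Prop_main_gk} c). Finally, upgrading convergence in probability to $L^1$ uses a uniform bound $|\frac1N\log\mathbb{E}_k Z_N|\leq C(\beta,h)$ obtained from Jensen's inequality applied in both directions together with standard Gaussian concentration of $\log Z_N$.
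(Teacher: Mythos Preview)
Your decomposition and the computation of the conditional Gaussian piece are the same as the paper's. The gap is in your treatment of the ``remaining exponent'' $R_k(\sigma)$.

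You assert that $R_k(\sigma)$ --- built from the $\sigma^T\rho^{(s)}\sigma$ pieces together with $-N\langle\mathbf{h}^{(k+1)}-h\mathbf{1},\sigma\rangle$ --- is ``precisely arranged as a sum of products of centered inner products'', i.e.\ is at least quadratic in $\hat\sigma=\sigma-\mathbf{m}^{(k+1)}$. This is false. After the first cancellation (the one the choice of $\mathbf{h}^{(k+1)}$ does buy you), one has
\[
\tfrac{1}{\sqrt2}\langle\rho^{(s)}\sigma,\sigma\rangle-\gamma_s\langle\zeta^{(s)},\sigma\rangle
\ \approx\ \langle\phi^{(s)},\hat\sigma\rangle\,\langle\zeta^{(s)},\hat\sigma\rangle
\ +\ \langle\phi^{(s)},\hat\sigma\rangle\,\langle\zeta^{(s)},\mathbf{m}^{(k+1)}\rangle,
\]
and by Lemma~\ref{Le_gamma_approx} the last factor is $\approx\beta\gamma_s(1-q)$, not zero. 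Thus $R_k(\sigma)$ retains the \emph{linear} piece $N\beta^2(1-q)\,Y_k$ with $Y_k=\sum_s\gamma_s\langle\phi^{(s)},\hat\sigma\rangle$. A linear term cannot be handled by your typical/atypical split: for any $\beta>0$ it contributes at the scale $e^{cN}$ to $E_p[\,\cdot\,]$, so your claimed limit $\beta^2(1-\Gamma_k^2)^2/4$ for the second summand would fail. The paper's key observation (which you treat the Gaussian piece too coarsely to see) is that the Gaussian factor $\tfrac{N\beta^2}{4}(1-\sum_s\langle\sigma,\phi^{(s)}\rangle^2)^2$, once centered, produces a matching linear piece $-N\beta^2(1-\Gamma_k^2)Y_k$, and the two cancel up to $N\beta^2(\Gamma_k^2-q)Y_k$, which vanishes only after the $k\to\infty$ limit. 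You must combine the two pieces \emph{before} separating typical from atypical.

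Even after this cancellation, what remains is genuinely quadratic in $\hat\sigma$ (the paper's $F_{N,k}'''$), and showing $\tfrac1N\log E_p[e^{N\beta F'''}]\to 0$ is precisely where the small-$\beta$ hypothesis enters: the Hoeffding rate for $\langle\phi^{(s)},\hat\sigma\rangle$ must dominate the prefactor $N\beta$ in the exponent. The paper makes this explicit via H\"older and three Hubbard--Stratonovich estimates ((\ref{Est_S}),\ (\ref{Est_Z}),\ (\ref{Est_zeta})); your ``Hoeffding tails beat $e^{cN}$'' sketch is the same idea but you should state clearly that it only works for $\beta$ below a threshold independent of $k$. (Minor: $\|\zeta^{(s)}\|^2\simeq 1$, not $1-\Gamma_{s-1}^2$; see (\ref{zetaij_Cov}).)
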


\begin{proposition}
\label{Prop_2nd_moment}Under the same conditions as in Proposition
\ref{Prop_1st_moment},%
\begin{equation}
\lim_{k\rightarrow\infty}\limsup_{N\rightarrow\infty}\mathbb{E}\left\vert
\frac{1}{N}\log\mathbb{E}_{k}\left(  Z_{N}^{2}\right)  -2\operatorname*{RS}%
\left(  \beta,h\right)  \right\vert \leq0.\label{second_moment}%
\end{equation}

\end{proposition}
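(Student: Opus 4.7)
The strategy is to compare $\mathbb{E}_{k}\left(  Z_{N}^{2}\right)  $ with $\left(  \mathbb{E}_{k}Z_{N}\right)  ^{2}$. Jensen gives $\mathbb{E}_{k}\left(  Z_{N}^{2}\right)  \geq\left(  \mathbb{E}_{k}Z_{N}\right)  ^{2}$, so by Proposition \ref{Prop_1st_moment} the inequality $\liminf_{N}N^{-1}\log\mathbb{E}_{k}\left(  Z_{N}^{2}\right)  \geq2\operatorname*{RS}\left(  \beta,h\right)  $ is automatic, and the content of the proposition is the matching upper bound.

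Following the outline in the introduction, I use the representation
\[
Z_{N}=\prod_{i}\cosh\left(  h_{i}^{\left(  k+1\right)  }\right)  \sum_{\mathbf{\sigma}}p\left(  \mathbf{\sigma}\right)  \exp\left[  H\left(  \mathbf{\sigma}\right)  -N\left\langle \mathbf{h}^{\left(  k+1\right)  },\mathbf{\sigma}\right\rangle \right]  ,
\]
with $p\left(  \mathbf{\sigma}\right)  =\prod_{i}p_{i}\left(  \sigma_{i}\right)  $ the product measure whose marginals have means $m_{i}^{\left(  k+1\right)  }$; both $\mathbf{h}^{\left(  k+1\right)  }$ and $p$ are $\mathcal{G}_{k}$-measurable by Lemma \ref{Le_basic1}c). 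Squaring introduces two replicas $\mathbf{\sigma}^{1},\mathbf{\sigma}^{2}$, and the decomposition $\mathbf{g}=\mathbf{g}^{\left(  k+1\right)  }+\sum_{s=1}^{k}\mathbf{\rho}^{\left(  s\right)  }$ from \eqref{gk_construction} shows that only the quadratic form in $\mathbf{g}^{\left(  k+1\right)  }$ fails to be $\mathcal{G}_{k}$-measurable. By Proposition \ref{Prop_main_gk}b), $\mathbf{g}^{\left(  k+1\right)  }$ is conditionally centered Gaussian given $\mathcal{G}_{k}$ with covariance \eqref{g_covariances} (indices shifted by one), and the two-replica conditional Gaussian MGF can be computed explicitly:
\[
\mathbb{E}_{k}\exp\left[  \tfrac{\beta}{\sqrt{2}}\sum_{ij}g_{ij}^{\left(  k+1\right)  }\left(  \sigma_{i}^{1}\sigma_{j}^{1}+\sigma_{i}^{2}\sigma_{j}^{2}\right)  \right]  =\exp\left(  \tfrac{\beta^{2}N}{4}\left[  A_{1}^{2}+A_{2}^{2}+2A_{12}^{2}\right]  \right)  ,
\]
where $A_{a}:=1-\sum_{r=1}^{k}\left\langle \mathbf{\phi}^{\left(  r\right)  },\mathbf{\sigma}^{a}\right\rangle ^{2}$ and $A_{12}:=\left\langle \mathbf{\sigma}^{1},\mathbf{\sigma}^{2}\right\rangle -\sum_{r=1}^{k}\left\langle \mathbf{\phi}^{\left(  r\right)  },\mathbf{\sigma}^{1}\right\rangle \left\langle \mathbf{\phi}^{\left(  r\right)  },\mathbf{\sigma}^{2}\right\rangle $. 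The analogous single-replica identity together with dividing yields
\[
\frac{\mathbb{E}_{k}\left(  Z_{N}^{2}\right)  }{\left(  \mathbb{E}_{k}Z_{N}\right)  ^{2}}=\mathbb{E}_{\tilde{p}\otimes\tilde{p}}\left[  \exp\left(  \tfrac{\beta^{2}N}{2}A_{12}^{2}\right)  \right]  ,
\]
with $\tilde{p}\left(  \mathbf{\sigma}\right)  \propto p\left(  \mathbf{\sigma}\right)  \exp\left[  M\left(  \mathbf{\sigma}\right)  +\tfrac{\beta^{2}N}{4}A\left(  \mathbf{\sigma}\right)  ^{2}\right]  $ and $M$ collecting the $\mathcal{G}_{k}$-measurable part of the exponent.

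The task thus reduces to showing $\limsup_{N}N^{-1}\log\mathbb{E}_{\tilde{p}^{\otimes2}}\exp\left(  \tfrac{\beta^{2}N}{2}A_{12}^{2}\right)  \rightarrow0$ as $k\rightarrow\infty$. Geometrically $A_{12}$ is the overlap of the two replicas after projecting out the span of $\mathbf{\phi}^{\left(  1\right)  },\ldots,\mathbf{\phi}^{\left(  k\right)  }$. Under the unperturbed product measure $p\otimes p$,
\[
\mathbb{E}_{p\otimes p}A_{12}=\left\Vert \mathbf{m}^{\left(  k+1\right)  }\right\Vert ^{2}-\sum_{r=1}^{k}\left\langle \mathbf{\phi}^{\left(  r\right)  },\mathbf{m}^{\left(  k+1\right)  }\right\rangle ^{2}\simeq q-\Gamma_{k}^{2}
\]
by Proposition \ref{Prop_inner_products}, and $q-\Gamma_{k}^{2}\rightarrow0$ as $k\rightarrow\infty$ by Lemma \ref{Le_sequences&AT} (the AT condition is strict for small $\beta$). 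Moreover $A_{12}$ is Lipschitz in the Hamming metric on $\left\{  \pm1\right\}  ^{2N}$ with coordinatewise constant $O\left(  1/N\right)  $, so under $p^{\otimes2}$ McDiarmid gives sub-Gaussian tails of scale $1/\sqrt{N}$. Writing $A_{12}=\mu_{k}+X/\sqrt{N}$ with $\mu_{k}\simeq q-\Gamma_{k}^{2}$ and $X$ sub-Gaussian, a Cauchy-Schwarz on the expansion $\tfrac{\beta^{2}N}{2}A_{12}^{2}=\tfrac{\beta^{2}N\mu_{k}^{2}}{2}+\beta^{2}\mu_{k}\sqrt{N}X+\tfrac{\beta^{2}}{2}X^{2}$ yields, for $\beta$ small enough,
\[
N^{-1}\log\mathbb{E}_{p^{\otimes2}}\exp\left(  \tfrac{\beta^{2}N}{2}A_{12}^{2}\right)  \leq\tfrac{\beta^{2}}{2}\mu_{k}^{2}\left(  1+O\left(  \beta^{2}\right)  \right)  +o_{N}\left(  1\right)  ,
\]
which vanishes as $N\rightarrow\infty$ then $k\rightarrow\infty$.

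The principal obstacle is passing from the product measure $p^{\otimes2}$ to the tilted measure $\tilde{p}^{\otimes2}$. The tilt $\exp\left[  M+\tfrac{\beta^{2}N}{4}A^{2}\right]  $ depends on $\mathbf{\sigma}$ only through the finitely many linear statistics $\left\langle \mathbf{\phi}^{\left(  r\right)  },\mathbf{\sigma}\right\rangle ,\left\langle \mathbf{\xi}^{\left(  s\right)  },\mathbf{\sigma}\right\rangle ,\left\langle \mathbf{\eta}^{\left(  s\right)  },\mathbf{\sigma}\right\rangle $ ($r,s\leq k$), entering in quadratic and quartic combinations. A Hubbard-Stratonovich linearization on these squares represents $\tilde{p}$ as a mixture of genuinely product measures indexed by a $O\left(  k\right)  $-dimensional auxiliary vector; for small $\beta$ a Laplace analysis localizes the auxiliary vector at a saddle for which the resulting product measure has mean close to $\mathbf{m}^{\left(  k+1\right)  }$, transferring the concentration of $A_{12}$ from $p^{\otimes2}$ to $\tilde{p}^{\otimes2}$ with a shift in $\mu_{k}$ that remains of order $q-\Gamma_{k}^{2}$. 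Making this transfer quantitative and uniform in $N$, and in particular controlling the finite-$k$ mean shift under the tilt, is the most delicate part of the argument.
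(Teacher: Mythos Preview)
Your ratio identity
\[
\frac{\mathbb{E}_{k}(Z_{N}^{2})}{(\mathbb{E}_{k}Z_{N})^{2}}=\mathbb{E}_{\tilde{p}\otimes\tilde{p}}\left[\exp\Big(\tfrac{\beta^{2}N}{2}A_{12}^{2}\Big)\right]
\]
is correct, and identifying the projected overlap $A_{12}$ as the sole new object is exactly the right diagnosis (the paper makes the same observation). The gap is in what you do next. By absorbing the single-replica tilt $M(\mathbf{\sigma})+\tfrac{\beta^{2}N}{4}A(\mathbf{\sigma})^{2}$ into the reference measure you trade the honest product law $p^{(k)}$ for a non-product $\tilde p$, and then have to fight to recover concentration of $A_{12}$ under $\tilde p$. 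The Hubbard--Stratonovich/Laplace step you outline is plausible for small $\beta$, but you neither carry it out nor indicate how to control the saddle uniformly in $k$; note also that $V(\mathbf{\sigma})=\tfrac{\beta^{2}N}{4}A(\mathbf{\sigma})^{2}$ is a square of a quadratic in the linear statistics $\langle\mathbf{\phi}^{(r)},\mathbf{\sigma}\rangle$, so a single HS layer does not yet produce a product mixture.

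The paper sidesteps this difficulty entirely by \emph{not} forming the ratio. It writes
\[
\mathbb{E}_{k}(Z_{N}^{2})=\exp\Big[2\sum_{i}\log\cosh h_{i}^{(k+1)}\Big]\sum_{\mathbf{\sigma},\mathbf{\tau}}p^{(k)}(\mathbf{\sigma})p^{(k)}(\mathbf{\tau})\exp\Big[N\beta\big(F_{N,k}(\mathbf{\sigma})+F_{N,k}(\mathbf{\tau})\big)+\tfrac{\beta^{2}N}{2}A_{12}^{2}\Big],
\]
i.e.\ it keeps the genuine product measure $p^{(k)}\otimes p^{(k)}$ and treats the cross term $G_{N,k}(\mathbf{\sigma},\mathbf{\tau}):=\tfrac{\beta}{2}A_{12}^{2}$ on the same footing as $F_{N,k}$. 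After centering $\mathbf{\sigma},\mathbf{\tau}$ at $\mathbf{m}^{(k+1)}$, the only part of $A_{12}$ that is of order zero in $(\hat{\mathbf{\sigma}},\hat{\mathbf{\tau}})$ is $\Vert\mathbf{m}^{(k+1)}\Vert^{2}-\sum_{r}\langle\mathbf{m}^{(k+1)},\mathbf{\phi}^{(r)}\rangle^{2}\simeq q-\Gamma_{k}^{2}\to0$ (your own computation), so $A_{12}^{2}$ contributes only terms that are quadratic or higher in $(\hat{\mathbf{\sigma}},\hat{\mathbf{\tau}})$. From there the same crude H\"older bound together with the elementary estimate $\chi_{i}(x)\le x^{2}/2$ used at the end of the first-moment proof (the estimates (\ref{Est_S})--(\ref{Est_zeta}), now applied jointly in the two replicas) closes the argument for small $\beta$, with no tilted measure and no Laplace analysis needed.

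In short: your approach is not wrong, but the hard step you flag is self-inflicted. Keeping $p^{(k)}\otimes p^{(k)}$ as the reference and recycling the first-moment machinery on the enlarged exponent $F_{N,k}(\mathbf{\sigma})+F_{N,k}(\mathbf{\tau})+G_{N,k}(\mathbf{\sigma},\mathbf{\tau})$ gives the result directly.
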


\begin{remark}
The requirement on $\beta$ is rather unsatisfactory. I believe that at least
Proposition \ref{Prop_1st_moment} is correct in the full AT-region (\ref{AT}).
Actually, only the very last argument given in the proof in the next section
requires an unspecified \textquotedblleft small $\beta$\textquotedblright%
\ argument. The problem is coming from using the Schwarz inequality and the
H\"{o}lder-inequality in the proof, but I haven't found a better estimate.
\end{remark}

The propositions are proved in the next section. We give now the proof of
Theorem \ref{Th_main} based on these propositions.

We will use that, actually for all $\beta,h,$ the free energy is
self-averaging:%
\begin{equation}
\lim_{N\rightarrow\infty}\frac{1}{N}\log Z_{N}=\lim_{N\rightarrow\infty}%
\frac{1}{N}\mathbb{E}\log Z_{N},\label{selfaveraging}%
\end{equation}
assuming the limit on the right hand side exists, which is the result in
\cite{GuTo}. This is a simple consequence of the Gaussian isoperimetric
inequality, a fact which is well known since long. In fact writing
$J_{ij}:=\sqrt{N}g_{ij}$ which are standard Gaussians, we have%
\[
\left\vert \frac{1}{N}\log Z_{N}\left(  J\right)  -\frac{1}{N}\log
Z_{N}\left(  J^{\prime}\right)  \right\vert \leq\frac{\beta}{\sqrt{2N}%
}\left\Vert J-J^{\prime}\right\Vert
\]
where $\left\Vert \cdot\right\Vert $ denotes the Euclidean norm in
$\mathbb{R}^{N\left(  N-1\right)  /2}.$ Therefore%
\[
\mathbb{P}\left(  \left\vert \frac{1}{N}\log Z_{N}-\mathbb{E}\frac{1}{N}\log
Z_{N}\right\vert \geq t\right)  \leq\exp\left[  -t^{2}N/\beta^{2}\right]  .
\]

By Jensen's inequality%
\[
\limsup_{N\rightarrow\infty}\frac{1}{N}\mathbb{E}\log Z_{N}\leq\limsup
_{N\rightarrow\infty}\frac{1}{N}\mathbb{E}\log\mathbb{E}_{k}\left(
Z_{N}\right)
\]
for all $k.$ Therefore, by Proposition \ref{Prop_1st_moment},%
\begin{equation}
\limsup_{N\rightarrow\infty}\frac{1}{N}\mathbb{E}\log Z_{N}\leq
\operatorname*{RS}\left(  \beta,h\right)  .\label{upper_bound}%
\end{equation}

For the estimate in the other direction, we rely on a second moment argument.
For $k,N\in\mathbb{N},$ set $A_{k,N}:=\left\{  Z_{N}\geq\frac{1}{2}%
\mathbb{E}_{k}\left(  Z_{N}\right)  \right\}  $%
\begin{align*}
\mathbb{E}_{k}\left(  Z_{N}\right)   & =\mathbb{E}_{k}\left(  Z_{N}%
;A_{k,N}^{c}\right)  +\mathbb{E}_{k}\left(  Z_{N};A_{k,N}\right) \\
& \leq\frac{1}{2}\mathbb{E}_{k}\left(  Z_{N}\right)  +\sqrt{\mathbb{E}%
_{k}\left(  Z_{N}^{2}\right)  \mathbb{P}_{k}\left(  A_{k,N}\right)  }%
\end{align*}
and therefore%
\begin{equation}
\mathbb{P}_{k}\left(  A_{k,N}\right)  \geq\frac{\mathbb{E}_{k}\left(
Z_{N}\right)  ^{2}}{4\mathbb{E}_{k}\left(  Z_{N}^{2}\right)  }.\label{Palay}%
\end{equation}
Using Proposition \ref{Prop_2nd_moment}, for an arbitrary $\varepsilon>0$
there exists $k_{0}\left(  \varepsilon\right)  $ such that for $k\geq
k_{0}\left(  \varepsilon\right)  $ we find $N_{0}\left(  \varepsilon,k\right)
$ with
\[
\mathbb{P}\left(  \frac{\mathbb{E}_{k}\left(  Z_{N}\right)  ^{2}}%
{4\mathbb{E}_{k}\left(  Z_{N}^{2}\right)  }\geq\mathrm{e}^{-\varepsilon
N}\right)  \geq\frac{1}{2},\ N\geq N_{0}.
\]
and therefore, by (\ref{Palay}), and the definition of $A_{k,N},$%
\[
\mathbb{P}\left(  \mathbb{P}_{k}\left(  \frac{1}{N}\log Z_{N}\geq\frac{1}%
{N}\log\mathbb{E}_{k}\left(  Z_{N}\right)  -\frac{\log2}{N}\right)
\geq\mathrm{e}^{-\varepsilon N}\right)  \geq\frac{1}{2}.
\]

By Proposition \ref{Prop_1st_moment}, we find for any $\varepsilon^{\prime
}>0,$ a $c\left(  \varepsilon^{\prime}\right)  >0$ and a $k_{0}^{\prime
}\left(  \varepsilon^{\prime}\right)  \in\mathbb{N}$ such that for $k\geq
k_{0}\left(  \varepsilon^{\prime}\right)  ,$ we find $N_{0}^{\prime}\left(
\varepsilon^{\prime},k\right)  $ such that for $N\geq N_{0}^{\prime},$ we have%
\[
\mathbb{P}\left(  \frac{1}{N}\log\mathbb{E}_{k}\left(  Z_{N}\right)
\geq\operatorname*{RS}\left(  \beta,h\right)  -\frac{\varepsilon^{\prime}}%
{2}\right)  \geq\frac{3}{4},
\]
and $N^{-1}\log2\leq\varepsilon^{\prime}/2.$ Therefore, for $k\geq\max\left(
k_{0}\left(  \varepsilon\right)  ,k_{0}^{\prime}\left(  \varepsilon^{\prime
}\right)  \right)  ,\ N\geq\max\left(  N_{0}^{\prime},N_{0}\right)  $%
\[
\mathbb{P}\left(  \mathbb{P}_{k}\left(  \frac{1}{N}\log Z_{N}\geq
\operatorname*{RS}\left(  \beta,h\right)  -\varepsilon^{\prime}\right)
\geq\mathrm{e}^{-\varepsilon N}\right)  \geq\frac{1}{4},
\]
implying by the Markov inequality%
\begin{equation}
\mathbb{P}\left(  \frac{1}{N}\log Z_{N}\geq\operatorname*{RS}\left(
\beta,h\right)  -\varepsilon^{\prime}\right)  \geq\frac{1}{4}\mathrm{e}%
^{-\varepsilon N}.\label{Markov}%
\end{equation}

By Gaussian isoperimetry, we have for any $\eta>0$ and large enough $N$%
\[
\mathbb{P}\left(  \left\vert \frac{1}{N}\log Z_{N}-\frac{1}{N}\mathbb{E}\log
Z_{N}\right\vert \leq\eta\right)  \geq1-\exp\left[  -\eta^{2}N/\beta
^{2}\right]  .
\]
If we choose $\varepsilon<\eta^{2}/\beta^{2},$ it follows that for $N$ large
enough one has%
\[
\frac{1}{N}\mathbb{E}\log Z_{N}\geq\operatorname*{RS}\left(  \beta,h\right)
-\varepsilon^{\prime}-\eta
\]
and as $\eta$ and $\varepsilon^{\prime}$ are arbitrary, we get%
\[
\liminf_{N\rightarrow\infty}\frac{1}{N}\mathbb{E}\log Z_{N}\geq
\operatorname*{RS}\left(  \beta,h\right)  .
\]
Together with (\ref{upper_bound}), this proves%
\[
\liminf_{N\rightarrow\infty}\frac{1}{N}\mathbb{E}\log Z_{N}=\operatorname*{RS}%
\left(  \beta,h\right)  .
\]

\section{Proofs of the propositions}

\begin{proof}
[Proof of Proposition \ref{Prop_1st_moment}]%
\begin{align*}
\mathbb{E}_{k}\left(  Z_{N}\right)   & =\sum_{\mathbf{\sigma}}2^{-N}%
\exp\left[  h\sum\nolimits_{i}\sigma_{i}\right]  \mathbb{E}_{k}\left(
\exp\left[  \frac{\beta N}{\sqrt{2}}\left\langle \mathbf{g\sigma
},\mathbf{\sigma}\right\rangle \right]  \right) \\
& =\sum_{\mathbf{\sigma}}2^{-N}\exp\left[  h\sum\nolimits_{i}\sigma_{i}%
+\frac{\beta N}{\sqrt{2}}\sum_{s=1}^{k}\left\langle \mathbf{\rho}^{\left(
s\right)  }\mathbf{\sigma},\mathbf{\sigma}\right\rangle \right] \\
& \times\mathbb{E}_{k}\left(  \exp\left[  \frac{\beta N}{\sqrt{2}}\left\langle
\mathbf{g}^{\left(  k+1\right)  }\mathbf{\sigma},\mathbf{\sigma}\right\rangle
\right]  \right)  .
\end{align*}

$\mathbf{g}^{\left(  k+1\right)  }$ is Gaussian conditionally on
$\mathcal{G}_{k},$ and therefore%
\[
\mathbb{E}_{k}\left(  \exp\left[  \frac{\beta N}{\sqrt{2}}\left\langle
\mathbf{g}^{\left(  k+1\right)  }\mathbf{\sigma},\mathbf{\sigma}\right\rangle
\right]  \right)  =\exp\left[  \frac{\beta^{2}N^{2}}{4}\mathbb{E}%
_{k}\left\langle \mathbf{g}^{\left(  k+1\right)  }\mathbf{\sigma
},\mathbf{\sigma}\right\rangle ^{2}\right]
\]
According to Proposition \ref{Prop_main_gk} a), b)%
\[
\mathbb{E}_{k}\left\langle \mathbf{g}^{\left(  k+1\right)  }\mathbf{\sigma
},\mathbf{\sigma}\right\rangle ^{2}=\frac{1}{N}\left(  1-\sum\nolimits_{r=1}%
^{k}\left\langle \mathbf{\phi}^{\left(  r\right)  },\mathbf{\sigma
}\right\rangle ^{2}\right)  ^{2}.
\]
Therefore%
\begin{align*}
\mathbb{E}_{k}\left(  Z_{N}\right)   & =\sum_{\mathbf{\sigma}}2^{-N}%
\exp\Big [h\sum\nolimits_{i}\sigma_{i}+\frac{\beta N}{\sqrt{2}}\sum_{s=1}%
^{k}\left\langle \mathbf{\rho}^{\left(  s\right)  }\mathbf{\sigma
},\mathbf{\sigma}\right\rangle \\
& +\frac{\beta^{2}N}{4}\left(  1-\sum\nolimits_{r=1}^{k}\left\langle
\mathbf{\phi}^{\left(  r\right)  },\mathbf{\sigma}\right\rangle ^{2}\right)
^{2}\Big ].
\end{align*}

With $\mathbf{h}^{\left(  k+1\right)  }$ and $\mathbf{m}^{\left(  k+1\right)
}$ defined in (\ref{hk_Def}), (\ref{mk_construktion}), which are
$\mathcal{F}_{k}$-m.b., we put%
\[
p^{\left(  k\right)  }\left(  \mathbf{\sigma}\right)  :=2^{-N}\frac
{\exp\left[  N\left\langle \mathbf{h}^{\left(  k+1\right)  },\mathbf{\sigma
}\right\rangle \right]  }{\prod_{i=1}^{N}\cosh\left(  h_{i}^{\left(
k+1\right)  }\right)  },
\]
which is the product measure of tilted coin tossing, the $\sigma_{i}$ having
mean $m_{i}^{\left(  k+1\right)  }.$ Then,%
\begin{equation}
\mathbb{E}_{k}\left(  Z_{N}\right)  =\exp\left[  \sum\nolimits_{i=1}^{N}%
\log\cosh\left(  h_{i}^{\left(  k+1\right)  }\right)  \right]  \sum
_{\mathbf{\sigma}}p^{\left(  k\right)  }\left(  \mathbf{\sigma}\right)
\exp\left[  N\beta F_{N,k}\left(  \sigma\right)  \right]  ,\label{Ek_ZN_exact}%
\end{equation}
where with $\gamma_{s}$%
\begin{align}
F_{N,k}\left(  \mathbf{\sigma}\right)   & :=\sum_{s=1}^{k}\left\langle
2^{-1/2}\mathbf{\rho}^{\left(  s\right)  }\mathbf{\sigma},\mathbf{\sigma
}\right\rangle -\sum_{s=1}^{k-1}\gamma_{s}\left\langle \mathbf{\zeta}^{\left(
s\right)  },\mathbf{\sigma}\right\rangle \label{FNk_Def}\\
& -\sqrt{q-\Gamma_{k-1}^{2}}\left\langle \mathbf{\zeta}^{\left(  k\right)
},\mathbf{\sigma}\right\rangle +\frac{\beta}{4}\left(  1-\sum\nolimits_{r=1}%
^{k}\left\langle \mathbf{\phi}^{\left(  r\right)  },\mathbf{\sigma
}\right\rangle ^{2}\right)  ^{2}.\nonumber
\end{align}
Up to here, this is an exact computation.

The first part on the right hand side of (\ref{Ek_ZN_exact}) does not depend
on $\mathbf{\sigma}$, and by Lemma \ref{Le_means_h}, we get for any $k:$%
\[
\lim_{N\rightarrow\infty}\mathbb{E}\left\vert \frac{1}{N}\sum\nolimits_{i=1}%
^{N}\log\cosh\left(  h_{i}^{\left(  k+1\right)  }\right)  -E\log\cosh\left(
h+\beta\sqrt{q}Z\right)  \right\vert =0
\]
and therefore, we only have to prove that with%
\[
Z\left(  F_{N,k}\right)  :=\sum_{\mathbf{\sigma}}p^{\left(  k\right)  }\left(
\mathbf{\sigma}\right)  \exp\left[  N\beta F_{N,k}\left(  \mathbf{\sigma
}\right)  \right]
\]
we have%
\begin{equation}
\lim_{k\rightarrow\infty}\lim_{N\rightarrow\infty}\mathbb{E}\left\vert
\frac{1}{N}\log Z\left(  F_{N,k}\right)  -\frac{\beta^{2}\left(  1-q\right)
}{4}\right\vert =0.\label{main_task}%
\end{equation}

We will perform a number of approximations which are negligible in the
$N\rightarrow\infty,\ k\rightarrow\infty,$ in this order. More precisely,
consider a random function%
\[
F_{N,k}^{^{\prime}}\left(  \mathbf{\sigma}\right)  =F_{N,k}\left(
\mathbf{\sigma}\right)  +\Delta_{N,k}\left(  \mathbf{\sigma}\right)
\]
with the property that%
\begin{equation}
\lim_{k\rightarrow\infty}\limsup_{N\rightarrow\infty}\mathbb{E}\sup
\nolimits_{\mathbf{\sigma}}\left\vert \Delta_{N,k}\left(  \mathbf{\sigma
}\right)  \right\vert =0,\label{est_Delta}%
\end{equation}
then%
\begin{equation}
\lim_{k\rightarrow\infty}\lim_{N\rightarrow\infty}\mathbb{E}\left\vert
\frac{1}{N}\log Z\left(  F_{N,k}\right)  -\frac{1}{N}\log Z\left(
F_{N,k}^{\prime}\right)  \right\vert =0.\label{F_by_Fprime}%
\end{equation}

For instance, taking $\Delta_{N,k}\left(  \mathbf{\sigma}\right)  :=\gamma
_{k}\left\langle \mathbf{\zeta}^{\left(  k\right)  },\mathbf{\sigma
}\right\rangle ,$ we have $\sup\nolimits_{\mathbf{\sigma}}\left\vert
\Delta_{N,k}\left(  \mathbf{\sigma}\right)  \right\vert \leq\gamma
_{k}\left\Vert \mathbf{\zeta}^{\left(  k\right)  }\right\Vert ,$ and using the
covariance structure of $\mathbf{\zeta}^{\left(  k\right)  }$ in Proposition
\ref{Prop_main_gk} c), we have $\sup_{k}\mathbb{E}\left\Vert \mathbf{\zeta
}^{\left(  k\right)  }\right\Vert \leq1.$ As $\gamma_{k}\rightarrow0$ for
$k\rightarrow\infty,$ (\ref{est_Delta}) is satisfied. By the same reasoning,
we can neglect $\sqrt{q-\Gamma_{k-1}^{2}}\left\langle \mathbf{\zeta}^{\left(
k\right)  },\mathbf{\sigma}\right\rangle $ under the AT-condition (\ref{AT}).
Therefore, we can replace $F_{N,k}$ by%
\begin{align*}
F_{N,k}^{\prime}\left(  \mathbf{\sigma}\right)   & :=\sum_{s=1}^{k}%
\left\langle 2^{-1/2}\mathbf{\rho}^{\left(  s\right)  }\mathbf{\sigma
},\mathbf{\sigma}\right\rangle -\sum_{s=1}^{k}\gamma_{s}\left\langle
\mathbf{\zeta}^{\left(  s\right)  },\mathbf{\sigma}\right\rangle \\
& +\frac{\beta}{4}\left(  1-\sum\nolimits_{r=1}^{k}\left\langle \mathbf{\phi
}^{\left(  r\right)  },\mathbf{\sigma}\right\rangle ^{2}\right)  ^{2},
\end{align*}
and get (\ref{F_by_Fprime}).

We do a further approximation for the first summand. Plugging in the first two
summands of the definition of $\mathbf{\rho}^{\left(  s\right)  }$
(\ref{rho}), the contribution to $\left\langle 2^{-1/2}\mathbf{\rho}^{\left(
s\right)  }\mathbf{\sigma},\mathbf{\sigma}\right\rangle $ is exactly
$\left\langle \mathbf{\phi}^{\left(  s\right)  },\mathbf{\sigma}\right\rangle
\left\langle \mathbf{\zeta}^{\left(  s\right)  },\mathbf{\sigma}\right\rangle
.$ The third term gives $\left\langle \mathbf{\phi}^{\left(  s\right)
},\mathbf{\xi}^{\left(  s\right)  }\right\rangle \left\langle \mathbf{\phi
}^{\left(  s\right)  },\mathbf{\sigma}\right\rangle ^{2},$ and we claim that
we can neglect that. Indeed%
\[
\sup_{\mathbf{\sigma}}\left\vert \left\langle \mathbf{\phi}^{\left(  s\right)
},\mathbf{\xi}^{\left(  s\right)  }\right\rangle \left\langle \mathbf{\phi
}^{\left(  s\right)  },\mathbf{\sigma}\right\rangle ^{2}\right\vert
\leq\left\vert \left\langle \mathbf{\phi}^{\left(  s\right)  },\mathbf{\xi
}^{\left(  s\right)  }\right\rangle \right\vert ,
\]
and using Lemma \ref{Le_phi_xi_corr}, we see that%
\[
\limsup_{N\rightarrow\infty}\mathbb{E}\sup\nolimits_{\mathbf{\sigma}%
}\left\vert \left\langle \mathbf{\phi}^{\left(  s\right)  },\mathbf{\xi
}^{\left(  s\right)  }\right\rangle \left\langle \mathbf{\phi}^{\left(
s\right)  },\mathbf{\sigma}\right\rangle ^{2}\right\vert =0
\]
for all $s.$ Therefore, we can indeed neglect this part. We now center the
$\mathbf{\sigma}$ by putting%
\[
\mathbf{\hat{\sigma}}^{\left(  k\right)  }:=\mathbf{\sigma-\mathbf{m}%
}^{\left(  k+1\right)  }.
\]
Then%
\begin{multline*}
\sum_{s=1}^{k}\left\langle \mathbf{\phi}^{\left(  s\right)  },\mathbf{\sigma
}\right\rangle \left\langle \mathbf{\zeta}^{\left(  s\right)  },\mathbf{\sigma
}\right\rangle =\sum_{s=1}^{k}\left\langle \mathbf{\phi}^{\left(  s\right)
},\mathbf{\hat{\sigma}}^{\left(  k\right)  }+\mathbf{\mathbf{m}}^{\left(
k+1\right)  }\right\rangle \left\langle \mathbf{\zeta}^{\left(  s\right)
},\mathbf{\hat{\sigma}}^{\left(  k\right)  }+\mathbf{\mathbf{m}}^{\left(
k+1\right)  }\right\rangle \\
=\sum_{s=1}^{k}\left\langle \mathbf{\phi}^{\left(  s\right)  },\mathbf{\hat
{\sigma}}^{\left(  k\right)  }\right\rangle \left\langle \mathbf{\zeta
}^{\left(  s\right)  },\mathbf{\hat{\sigma}}^{\left(  k\right)  }\right\rangle
+\sum_{s=1}^{k}\left\langle \mathbf{\phi}^{\left(  s\right)  }%
,\mathbf{\mathbf{m}}^{\left(  k+1\right)  }\right\rangle \left\langle
\mathbf{\zeta}^{\left(  s\right)  },\mathbf{\hat{\sigma}}^{\left(  k\right)
}\right\rangle \\
+\sum_{s=1}^{k}\left\langle \mathbf{\phi}^{\left(  s\right)  },\mathbf{\hat
{\sigma}}^{\left(  k\right)  }\right\rangle \left\langle \mathbf{\zeta
}^{\left(  s\right)  },\mathbf{\mathbf{m}}^{\left(  k+1\right)  }\right\rangle
+\sum_{s=1}^{k}\left\langle \mathbf{\phi}^{\left(  s\right)  }%
,\mathbf{\mathbf{m}}^{\left(  k+1\right)  }\right\rangle \left\langle
\mathbf{\zeta}^{\left(  s\right)  },\mathbf{\mathbf{m}}^{\left(  k+1\right)
}\right\rangle .
\end{multline*}
We claim that we can replace the second summand on the right hand side by
$\sum_{s=1}^{k}\gamma_{s}\left\langle \mathbf{\zeta}^{\left(  s\right)
},\mathbf{\hat{\sigma}}^{\left(  k\right)  }\right\rangle .$ Indeed%
\[
\left\vert \left\langle \mathbf{\zeta}^{\left(  s\right)  },\mathbf{\hat
{\sigma}}^{\left(  k\right)  }\right\rangle \left[  \left\langle \mathbf{\phi
}^{\left(  s\right)  },\mathbf{m}^{\left(  k+1\right)  }\right\rangle
-\gamma_{s}\right]  \right\vert \leq\left\Vert \mathbf{\zeta}^{\left(
s\right)  }\right\Vert \left\Vert \left\langle \mathbf{\phi}^{\left(
s\right)  },\mathbf{m}^{\left(  k+1\right)  }\right\rangle -\gamma
_{s}\right\Vert ,
\]
and%
\begin{align*}
\mathbb{E}\left(  \left\Vert \mathbf{\zeta}^{\left(  s\right)  }\right\Vert
\left\Vert \left\langle \mathbf{\phi}^{\left(  s\right)  },\mathbf{m}^{\left(
k+1\right)  }\right\rangle -\gamma_{s}\right\Vert \right)   & \leq
\sqrt{\mathbb{E}\left\Vert \mathbf{\zeta}^{\left(  s\right)  }\right\Vert
^{2}\mathbb{E}\left\Vert \left\langle \mathbf{\phi}^{\left(  s\right)
},\mathbf{m}^{\left(  k+1\right)  }\right\rangle -\gamma_{s}\right\Vert ^{2}%
}\\
& \leq\operatorname*{const}\times\sqrt{\mathbb{E}\left\Vert \left\langle
\mathbf{\phi}^{\left(  s\right)  },\mathbf{m}^{\left(  k+1\right)
}\right\rangle -\gamma_{s}\right\Vert ^{2}}%
\end{align*}
which converges to $0$ for $N\rightarrow\infty,$ by Proposition
\ref{Prop_inner_products} a). In a similar way, using Lemma
\ref{Le_gamma_approx}, we can replace%
\[
\sum_{s=1}^{k}\left\langle \mathbf{\phi}^{\left(  s\right)  },\mathbf{\hat
{\sigma}}^{\left(  k\right)  }\right\rangle \left\langle \mathbf{\zeta
}^{\left(  s\right)  },\mathbf{\mathbf{m}}^{\left(  k+1\right)  }%
\right\rangle
\]
by%
\[
\beta\left(  1-q\right)  \sum_{s=1}^{k}\gamma_{s}\left\langle \mathbf{\phi
}^{\left(  s\right)  },\mathbf{\hat{\sigma}}^{\left(  k\right)  }\right\rangle
.
\]
In the end, we replace $F_{N,k}^{\prime}$ by%
\begin{align*}
F_{N,k}^{\prime\prime}\left(  \mathbf{\sigma}\right)   & :=\sum_{s=1}%
^{k}\left\langle \mathbf{\phi}^{\left(  s\right)  },\mathbf{\hat{\sigma}%
}^{\left(  k\right)  }\right\rangle \left\langle \mathbf{\zeta}^{\left(
s\right)  },\mathbf{\hat{\sigma}}^{\left(  k\right)  }\right\rangle
+\beta\left(  1-q\right)  \sum_{s=1}^{k}\gamma_{s}\left\langle \mathbf{\phi
}^{\left(  s\right)  },\mathbf{\hat{\sigma}}^{\left(  k\right)  }\right\rangle
\\
& +\frac{\beta}{4}\left(  1-\sum\nolimits_{r=1}^{k}\left\langle \mathbf{\phi
}^{\left(  r\right)  },\mathbf{\sigma}\right\rangle ^{2}\right)  ^{2},
\end{align*}
achieving%
\begin{equation}
\lim_{k\rightarrow\infty}\lim_{N\rightarrow\infty}\mathbb{E}\left\vert
\frac{1}{N}\log Z\left(  F_{N,k}^{\prime}\right)  -\frac{1}{N}\log Z\left(
F_{N,k}^{\prime\prime}\right)  \right\vert =0.\label{Fprime_by_F2prime}%
\end{equation}
where we have made repeated use of Proposition \ref{Prop_inner_products} and
Lemma \ref{Le_gamma_approx}, and $\sum_{s=1}^{k}\gamma_{s}^{2}\rightarrow q, $
as $k\rightarrow\infty,$ under the AT-condition. Using (\ref{F_by_Fprime}), it
therefore remains to prove%
\[
\lim_{k\rightarrow\infty}\lim_{N\rightarrow\infty}\mathbb{E}\left\vert
\frac{1}{N}\log Z\left(  F_{N,k}^{\prime\prime}\right)  -\frac{\beta
^{2}\left(  1-q\right)  }{4}\right\vert =0.
\]

The most \textquotedblleft dangerous\textquotedblright\ part in In
$F_{N,k}^{\prime\prime}$ is the presence of $\sum_{s=1}^{k}\gamma
_{s}\left\langle \mathbf{\phi}^{\left(  s\right)  },\mathbf{\hat{\sigma}%
}^{\left(  k\right)  }\right\rangle $, but fortunately, it cancels in leading
order when centering the third part.%
\[
\sum\nolimits_{r=1}^{k}\left\langle \mathbf{\phi}^{\left(  r\right)
},\mathbf{\sigma}\right\rangle ^{2}=\sum\nolimits_{r=1}^{k}\left[
\left\langle \mathbf{\phi}^{\left(  r\right)  },\mathbf{\hat{\sigma}}^{\left(
k\right)  }\right\rangle +\left\langle \mathbf{\phi}^{\left(  r\right)
},\mathbf{m}^{\left(  k+1\right)  }\right\rangle \right]  ^{2}.
\]
For the same reason as repeatedly use above, we may replace $\left\langle
\mathbf{\phi}^{\left(  r\right)  },\mathbf{m}^{\left(  k+1\right)
}\right\rangle $ by $\gamma_{r}$ (in the $N\rightarrow\infty,\ k\rightarrow
\infty$ limit), and replace $\sum_{r=1}^{k}\gamma_{r}^{2}$ by $q$ under the
AT-condition. By these approximations, we replace the right hand side of the
expression above by%
\[
\sum\nolimits_{r=1}^{k}\left[  \left\langle \mathbf{\phi}^{\left(  r\right)
},\mathbf{\hat{\sigma}}^{\left(  k\right)  }\right\rangle +\gamma_{r}\right]
^{2}\approx q+2Y_{k}+S_{k}^{2},
\]
where%
\begin{align*}
Y_{k}  & :=\sum\nolimits_{r=1}^{k}\gamma_{r}\left\langle \mathbf{\phi
}^{\left(  r\right)  },\mathbf{\hat{\sigma}}^{\left(  k\right)  }\right\rangle
,\\
S_{k}^{2}  & :=\sum\nolimits_{r=1}^{k}\left\langle \mathbf{\phi}^{\left(
r\right)  },\mathbf{\hat{\sigma}}^{\left(  k\right)  }\right\rangle ^{2}.
\end{align*}
Therefore, with these approximations, we have%
\begin{align*}
& \beta\left(  1-q\right)  Y_{k}+\frac{\beta}{4}\left(  1-\sum\nolimits_{r=1}%
^{k}\left\langle \mathbf{\phi}^{\left(  r\right)  },\mathbf{\sigma
}\right\rangle ^{2}\right)  ^{2}\\
& \approx\beta\left(  1-q\right)  Y_{k}+\frac{\beta}{4}\left(  1-q-2Y_{k}%
-S_{k}^{2}\right)  ^{2}\\
& =\frac{\beta\left(  1-q\right)  ^{2}}{4}+\beta Y_{k}^{2}+\frac{\beta}%
{4}S_{k}^{4}-\frac{\beta}{2}\left(  1-q\right)  S_{k}^{2}+\beta Y_{k}S_{k}%
^{2}.
\end{align*}
The first summand is exactly what we want, and we \textquotedblleft
only\textquotedblright\ have to check that the rest does not harm. In other
words, putting%
\[
F_{N,k}^{\prime\prime\prime}\left(  \mathbf{\sigma}\right)  :=\sum_{s=1}%
^{k}\left\langle \mathbf{\phi}^{\left(  s\right)  },\mathbf{\hat{\sigma}%
}^{\left(  k\right)  }\right\rangle \left\langle \mathbf{\zeta}^{\left(
s\right)  },\mathbf{\hat{\sigma}}^{\left(  k\right)  }\right\rangle +\beta
Y_{k}^{2}+\frac{\beta}{4}S_{k}^{4}-\frac{\beta}{2}\left(  1-q\right)
S_{k}^{2}+\beta Y_{k}S_{k}^{2},
\]
we have%
\begin{equation}
\lim_{k\rightarrow\infty}\lim_{N\rightarrow\infty}\mathbb{E}\left\vert
\frac{1}{N}\log Z\left(  F_{N,k}^{\prime\prime}\right)  -\frac{1}{N}\log
Z\left(  \frac{\beta\left(  1-q\right)  ^{2}}{4}+F_{N,k}^{\prime\prime\prime
}\right)  \right\vert =0,\label{F2prime_by_F3prime}%
\end{equation}
and using (\ref{F_by_Fprime}), (\ref{Fprime_by_F2prime}), and
(\ref{F2prime_by_F3prime}), it remains to prove%
\begin{equation}
\lim_{k\rightarrow\infty}\lim_{N\rightarrow\infty}\mathbb{E}\left\vert
\frac{1}{N}\log Z\left(  F_{N,k}^{\prime\prime\prime}\right)  \right\vert
=0.\label{CurieWeiss}%
\end{equation}

This is a somewhat complicated Curie-Weiss type computation. An important
point is that $F_{N,k}^{\prime\prime\prime}$ contains only summands which are
at least quadratic in the $\mathbf{\hat{\sigma}}^{\left(  k\right)  }.$ If
there would be a linear term, (\ref{CurieWeiss}) would for any $\beta>0$ not
be true, of course. I strongly believe that (\ref{CurieWeiss}) is correct
under the AT-condition (\ref{AT}), but a prove eludes me. The reader should
also be aware, that we haven't lost anything in the AT-region. In other words,
if for a parameter $\left(  \beta,h\right)  $ satisfying (\ref{AT}),
(\ref{CurieWeiss}) is not true, then for these $\left(  \beta,h\right)  ,$
(\ref{first_moment}) is not correct.

First remark that%
\[
\frac{1}{N}\log Z\left(  F_{N,k}^{\prime\prime\prime}\right)  \geq\beta
\sum\nolimits_{\mathbf{\sigma}}p^{\left(  k\right)  }\left(  \mathbf{\sigma
}\right)  F_{N,k}^{\prime\prime\prime}\left(  \mathbf{\sigma}\right)
\]
and $\mathbb{E}\left\vert \sum\nolimits_{\mathbf{\sigma}}p^{\left(  k\right)
}\left(  \mathbf{\sigma}\right)  F_{N,k}^{\prime\prime\prime}\left(
\mathbf{\sigma}\right)  \right\vert =O\left(  N^{-1/2}\right)  $ through the
independence of the components under $p^{\left(  k\right)  }\left(
\mathbf{\sigma}\right)  $ and the centering.

It remains to prove the upper bound. We use some rather crude and certainly
not optimal bounds.%
\begin{align*}
\sum_{s=1}^{k}\left\langle \mathbf{\phi}^{\left(  s\right)  },\mathbf{\hat
{\sigma}}^{\left(  k\right)  }\right\rangle \left\langle \mathbf{\zeta
}^{\left(  s\right)  },\mathbf{\hat{\sigma}}^{\left(  k\right)  }%
\right\rangle  & \leq\sqrt{S_{k}^{2}\sum\nolimits_{s=1}^{k}\left\langle
\mathbf{\zeta}^{\left(  s\right)  },\mathbf{\hat{\sigma}}^{\left(  k\right)
}\right\rangle ^{2}}\\
& \leq\frac{1}{2}S_{k}^{2}+\frac{1}{2}\sum\nolimits_{s=1}^{k}\left\langle
\mathbf{\zeta}^{\left(  s\right)  },\mathbf{\hat{\sigma}}^{\left(  k\right)
}\right\rangle ^{2}.
\end{align*}
Also%
\[
\left\vert Y_{k}\right\vert \leq\sum_{s=1}^{k}\gamma_{s}^{2}\left\Vert
\mathbf{\hat{\sigma}}^{\left(  k\right)  }\right\Vert \leq q\left\Vert
\mathbf{\hat{\sigma}}^{\left(  k\right)  }\right\Vert \leq2q,
\]%
\[
S_{k}^{2}\leq\left\Vert \mathbf{\hat{\sigma}}^{\left(  k\right)  }\right\Vert
\leq2.
\]
Using these crude estimates, and the H\"{o}lder inequality, one sees that it
satisfies to prove%
\begin{align}
\limsup_{N\rightarrow\infty}\frac{1}{N}\mathbb{E}\log\sum_{\mathbf{\sigma}%
}p\left(  \mathbf{\sigma}\right)  \exp\left[  \lambda NS_{k}^{2}\right]    &
\leq0,\label{Est_S}\\
\limsup_{N\rightarrow\infty}\frac{1}{N}\mathbb{E}\log\sum_{\mathbf{\sigma}%
}p\left(  \mathbf{\sigma}\right)  \exp\left[  \lambda NY_{k}^{2}\right]    &
\leq0,\label{Est_Z}\\
\limsup_{N\rightarrow\infty}\frac{1}{N}\mathbb{E}\log\sum_{\mathbf{\sigma}%
}p^{\left(  k\right)  }\left(  \mathbf{\sigma}\right)  \exp\left[  \lambda
N\sum\nolimits_{s=1}^{k}\left\langle \mathbf{\zeta}^{\left(  s\right)
},\mathbf{\hat{\sigma}}\right\rangle ^{2}\right]    & \leq0,\label{Est_zeta}%
\end{align}
for small enough $\lambda>0,$ where \textquotedblleft small
enough\textquotedblright\ \textit{does not depend} on $k.$ This latter
requirement looks somewhat dangerous, but here it helps that the
$\phi^{\left(  s\right)  }$ are orthogonal with respect to out inner product
on $\mathbb{R}^{N},$ and the $\mathbf{\zeta}^{\left(  s\right)  }$ are
approximately so. We start with (\ref{Est_S})%
\begin{align*}
\sum_{\mathbf{\sigma}}p\left(  \mathbf{\sigma}\right)  \exp\left[  \lambda
NS_{k}^{2}\right]    & =\sum_{\mathbf{\sigma}}p^{\left(  k\right)  }\left(
\mathbf{\sigma}\right)  \exp\left[  \lambda N\sum\nolimits_{s=1}%
^{k}\left\langle \mathbf{\phi}^{\left(  s\right)  },\mathbf{\hat{\sigma}%
}^{\left(  k\right)  }\right\rangle ^{2}\right]  \\
& =E\sum_{\mathbf{\sigma}}p^{\left(  k\right)  }\left(  \mathbf{\sigma
}\right)  \exp\left[  \sum_{i=1}^{N}\left(  \sum\nolimits_{s=1}^{k}Z_{s}%
\sqrt{\frac{2\lambda}{N}}\phi_{i}^{\left(  s\right)  }\right)  \hat{\sigma
}_{i}\right]  \\
& \leq E\exp\left[  \sum_{i=1}^{N}\chi_{i}\left(  \sum\nolimits_{s=1}^{k}%
Z_{s}\sqrt{\frac{2\lambda}{N}}\phi_{i}^{\left(  s\right)  }\right)  \right]  .
\end{align*}
where%
\[
\chi_{i}\left(  x\right)  :=\log\cosh\left(  h_{i}+x\right)  -\log\cosh\left(
h_{i}\right)  -xm_{i}.
\]
By Lemma \ref{Le_chi}, we have $\chi_{i}\left(  x\right)  \leq x^{2}/2,$ so,
using also the fact that the $\mathbf{\phi}^{\left(  s\right)  }$ are
orthonormal, one has that the above is%
\[
\leq E\exp\left[  \frac{\lambda}{N}\sum_{i=1}^{N}\left(  \sum\nolimits_{s=1}%
^{k}Z_{s}\phi_{i}^{\left(  s\right)  }\right)  ^{2}\right]  =\left(
E\exp\left[  \lambda Z\right]  \right)  ^{k}%
\]
which finite for $\lambda<1/2.$ Therefore, we have for this part a
deterministic upper bound and therefore (\ref{Est_S}) follows.

We next prove (\ref{Est_Z}).%
\begin{align*}
\sum_{\mathbf{\sigma}}p\left(  \mathbf{\sigma}\right)  \exp\left[  \lambda
NY_{k}^{2}\right]    & =E\sum_{\mathbf{\sigma}}p^{\left(  k\right)  }\left(
\mathbf{\sigma}\right)  \exp\left[  \sqrt{\frac{2\lambda}{N}}Z\sum
\nolimits_{r=1}^{k}\gamma_{r}\sum\nolimits_{i=1}^{N}\phi_{i}^{\left(
r\right)  }\sigma_{i}^{\left(  k+1\right)  }\right]  \\
& =E\sum_{\mathbf{\sigma}}p^{\left(  k\right)  }\left(  \mathbf{\sigma
}\right)  \exp\left[  \sum\nolimits_{i=1}^{N}\sigma_{i}^{\left(  k+1\right)
}\sqrt{\frac{2\lambda}{N}}Z\sum\nolimits_{r=1}^{k}\gamma_{r}\phi_{i}^{\left(
r\right)  }\right]  \\
& \leq E\exp\left[  \frac{\lambda}{N}Z^{2}\sum\nolimits_{i=1}^{N}\left(
\sum\nolimits_{r=1}^{k}\gamma_{r}\phi_{i}^{\left(  r\right)  }\right)
^{2}\right]  \\
& =E\exp\left[  \lambda Z^{2}\sum\nolimits_{r=1}^{k}\gamma_{r}^{2}\right]
\leq E\exp\left[  \lambda qZ^{2}\right]  <\infty
\end{align*}
for $\lambda q<1/2.$

(\ref{Est_zeta}) is slightly more complicated. We start in the same way as
above and reach%
\begin{equation}
\sum_{\mathbf{\sigma}}p^{\left(  k\right)  }\left(  \mathbf{\sigma}\right)
\exp\left[  \lambda N\sum_{s=1}^{k}\left\langle \mathbf{\zeta}^{\left(
s\right)  },\mathbf{\hat{\sigma}}\right\rangle ^{2}\right]  =E\exp\left[
\sum_{i=1}^{N}\chi_{i}\left(  \sum\nolimits_{s=1}^{k}Z_{s}\sqrt{\frac
{2\lambda}{N}}\zeta_{i}^{\left(  s\right)  }\right)  \right]
\label{Est_zeta1}%
\end{equation}
Fix and $\varepsilon>0,$ and consider the event%
\[
A_{k,N}:=\bigcup\nolimits_{s:s\leq k}\left\{  \left\Vert \mathbf{\zeta
}^{\left(  s\right)  }\right\Vert ^{2}>1+\varepsilon\right\}  \cup
\bigcup\nolimits_{s.t:s,t\leq k}\left\{  \left\vert \left\langle
\mathbf{\zeta}^{\left(  s\right)  },\mathbf{\zeta}^{\left(  t\right)
}\right\rangle \right\vert >\frac{2\varepsilon}{k}\right\}
\]
On $A_{k,N}^{\mathrm{c}},$ we estimate the rhs of (\ref{Est_zeta1}) by%
\[
\left(  E\exp\left[  \lambda\left(  1+2\varepsilon\right)  Z\right]  \right)
^{k}%
\]
which is finite if $\lambda\left(  1+2\varepsilon\right)  <1/2.$ On the other
hand%
\[
\frac{1}{N}\log\sum_{\mathbf{\sigma}}p\left(  \mathbf{\sigma}\right)
\exp\left[  \lambda N\sum_{s=1}^{k}\left\langle \mathbf{\zeta}^{\left(
s\right)  },\mathbf{\hat{\sigma}}\right\rangle ^{2}\right]  \leq2\lambda
\sum_{s=1}^{k}\left\Vert \mathbf{\zeta}^{\left(  s\right)  }\right\Vert ^{2}%
\]
and by Lemma \ref{Le_zeta}%
\[
\lim_{N\rightarrow\infty}\mathbb{E}\left(  1_{A_{N,k}}\sum\nolimits_{s=1}%
^{k}\left\Vert \mathbf{\zeta}^{\left(  s\right)  }\right\Vert ^{2}\right)
\leq\lim_{N\rightarrow\infty}\sqrt{\mathbb{P}\left(  A_{N,k}\right)  }%
\sqrt{\mathbb{E}\left[  \sum\nolimits_{s=1}^{k}\left\Vert \mathbf{\zeta
}^{\left(  s\right)  }\right\Vert ^{2}\right]  ^{2}}=0
\]
for all $k.$ Therefore,%
\[
\lim_{N\rightarrow\infty}\mathbb{E}\frac{1}{N}\log\sum_{\mathbf{\sigma}%
}p^{\left(  k\right)  }\left(  \mathbf{\sigma}\right)  \exp\left[  \lambda
N\sum_{s=1}^{k}\left\langle \mathbf{\zeta}^{\left(  s\right)  },\mathbf{\hat
{\sigma}}\right\rangle ^{2}\right]  =0
\]

\end{proof}

\begin{proof}
[Proof of Proposition \ref{Prop_2nd_moment}]This is parallel, and we will be
brief. A similar computation as in the previous proof leads to%
\begin{align*}
\mathbb{E}_{k}\left(  Z_{N}^{2}\right)   & =\sum_{\mathbf{\sigma,\tau}}%
2^{-2N}\exp\left[  h\sum\nolimits_{i}\left(  \sigma_{i}+\tau_{i}\right)
+\frac{\beta N}{\sqrt{2}}\sum_{s=1}^{k}\left\langle \mathbf{\rho}^{\left(
s\right)  }\mathbf{\sigma},\mathbf{\sigma}\right\rangle +\left\langle
\mathbf{\rho}^{\left(  s\right)  }\mathbf{\tau},\mathbf{\tau}\right\rangle
\right] \\
& \times\exp\Big [\frac{\beta^{2}N}{4}\Big (\left(  1-\sum\nolimits_{r=1}%
^{k}\left\langle \mathbf{\phi}^{\left(  r\right)  },\mathbf{\sigma
}\right\rangle ^{2}\right)  ^{2}\\
& +\left(  1-\sum\nolimits_{r=1}^{k}\left\langle \mathbf{\phi}^{\left(
r\right)  },\mathbf{\tau}\right\rangle ^{2}\right)  ^{2}\\
& +\left[  \left\langle \mathbf{\sigma},\mathbf{\tau}\right\rangle
-\sum\nolimits_{r=1}^{k}\left\langle \mathbf{\sigma},\mathbf{\phi}^{\left(
r\right)  }\right\rangle \left\langle \mathbf{\tau},\mathbf{\phi}^{\left(
r\right)  }\right\rangle \right]  ^{2}\Big )\Big ].
\end{align*}
The only difference between $\mathbb{E}_{k}\left(  Z_{N}^{2}\right)  $ and
$\left(  \mathbb{E}_{k}Z_{N}\right)  ^{2}$ come from the presence of the last
cross term in the expression above. We therefore only have to check that after
the centering of $\mathbf{\sigma}$ around $\mathbf{m}^{\left(  k+1\right)  },$
and switching to $p^{\left(  k\right)  }\left(  \mathbf{\sigma}\right)
,\ p^{\left(  k\right)  }\left(  \mathbf{\tau}\right)  ,$ this cross term does
not cause problems for $\beta$ small. Writing $\mathbf{\sigma=\hat{\sigma}%
}^{\left(  k\right)  }+\mathbf{m}^{\left(  k+1\right)  }$ and multiplying out,
the only contribution in $\left\langle \mathbf{\sigma},\mathbf{\tau
}\right\rangle -\sum_{r=1}^{k}\left\langle \mathbf{\sigma},\mathbf{\phi
}^{\left(  r\right)  }\right\rangle \left\langle \mathbf{\tau},\mathbf{\phi
}^{\left(  r\right)  }\right\rangle $ which is not linear or quadratic in
$\left(  \mathbf{\hat{\sigma}}^{\left(  k\right)  },\mathbf{\hat{\tau}%
}^{\left(  k\right)  }\right)  $ is%
\[
\left\Vert \mathbf{m}^{\left(  k+1\right)  }\right\Vert ^{2}-\sum
\nolimits_{r=1}^{k}\left\langle \mathbf{m}^{\left(  k+1\right)  }%
,\mathbf{\phi}^{\left(  r\right)  }\right\rangle \left\langle \mathbf{m}%
^{\left(  k+1\right)  },\mathbf{\phi}^{\left(  r\right)  }\right\rangle .
\]
But%
\[
\lim_{k\rightarrow\infty}\lim_{N\rightarrow\infty}\mathbb{E}\left\vert
\left\Vert \mathbf{m}^{\left(  k+1\right)  }\right\Vert ^{2}-\sum
\nolimits_{r=1}^{k}\left\langle \mathbf{m}^{\left(  k+1\right)  }%
,\mathbf{\phi}^{\left(  r\right)  }\right\rangle \left\langle \mathbf{m}%
^{\left(  k+1\right)  },\mathbf{\phi}^{\left(  r\right)  }\right\rangle
\right\vert =0,
\]
so what remains after this (asymptotic) cancellation are terms which are
linear or quadratic in $\left(  \mathbf{\hat{\sigma}}^{\left(  k\right)
},\mathbf{\hat{\tau}}^{\left(  k\right)  }\right)  $. Therefore after squaring
this expression, they are quadratic or of higher order. Writing $G_{N,k}%
\left(  \mathbf{\sigma},\mathbf{\tau}\right)  $ for this, we see in the same
way as in the proof of Proposition \ref{Prop_1st_moment} that%
\begin{align*}
\mathbb{E}_{k}\left(  Z_{N}^{2}\right)   & \leq\exp\left[
2N\operatorname*{RS}\left(  \beta,h\right)  \right]  \sum_{\mathbf{\sigma
,\tau}}p^{\left(  k\right)  }\left(  \mathbf{\sigma}\right)  p^{\left(
k\right)  }\left(  \mathbf{\tau}\right) \\
& \times\exp\left[  N\beta\left(  F_{N,k}^{\prime}\left(  \mathbf{\sigma
}\right)  +F_{N,k}^{\prime}\left(  \mathbf{\tau}\right)  +G_{N,k}\left(
\mathbf{\sigma},\mathbf{\tau}\right)  \right)  \right]  ,
\end{align*}
and with the same argument as before, one sees that for small enough $\beta$%
\begin{multline*}
\lim_{k\rightarrow\infty}\limsup_{N\rightarrow\infty}\frac{1}{N}%
\mathbb{\mathbb{E}}\log\sum_{\mathbf{\sigma,\tau}}p^{\left(  k\right)
}\left(  \mathbf{\sigma}\right)  p^{\left(  k\right)  }\left(  \mathbf{\tau
}\right) \\
\times\exp\left[  N\beta\left(  F_{N,k}^{\prime}\left(  \mathbf{\sigma
}\right)  +F_{N,k}^{\prime}\left(  \mathbf{\tau}\right)  +G_{N,k}\left(
\mathbf{\sigma},\mathbf{\tau}\right)  \right)  \right]  \leq0.
\end{multline*}

\end{proof}

\section{Technical lemmas}

\begin{lemma}
\label{Le_phi_xi_corr}$\left\langle \mathbf{\phi}^{\left(  m\right)
},\mathbf{\xi}^{\left(  m\right)  }\right\rangle $ is (unconditionally)
Gaussian with variance $1/N.$
\end{lemma}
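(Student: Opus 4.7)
The plan is to compute the conditional distribution of $\left\langle \mathbf{\phi}^{(m)},\mathbf{\xi}^{(m)}\right\rangle $ given $\mathcal{G}_{m-1}$ and show that it is Gaussian with a \emph{deterministic} variance equal to $1/N$; this automatically upgrades to the unconditional statement, since the conditional characteristic function will not depend on the conditioning.

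First I would observe that $\mathbf{\phi}^{(m)}$ is $\mathcal{G}_{m-1}$-measurable by Lemma \ref{Le_basic1} c), while, by Proposition \ref{Prop_main_gk} b), $\mathbf{g}^{(m)}$ is conditionally Gaussian given $\mathcal{G}_{m-1}$ with the same law as given $\mathcal{G}_{m-2}$. Hence $\xi^{(m)}_{i}=\sum_{j}g^{(m)}_{ij}\phi^{(m)}_{j}$ and a fortiori the scalar $\left\langle \mathbf{\phi}^{(m)},\mathbf{\xi}^{(m)}\right\rangle =\frac{1}{N}\sum_{i,j}g^{(m)}_{ij}\phi^{(m)}_{i}\phi^{(m)}_{j}$ are centered Gaussian conditionally on $\mathcal{G}_{m-1}$.

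Next I would compute the conditional variance using the already-established covariance identity (\ref{xi_cov}):
\[
\mathbb{E}_{m-1}\left\langle \mathbf{\phi}^{(m)},\mathbf{\xi}^{(m)}\right\rangle ^{2}=\frac{1}{N^{2}}\sum_{i,j}\phi^{(m)}_{i}\phi^{(m)}_{j}\bigl[\delta_{ij}-\alpha^{(m-1)}_{ij}\bigr].
\]
The first term contributes $\frac{1}{N^{2}}\sum_{i}(\phi^{(m)}_{i})^{2}=\frac{1}{N}$, since $\left\Vert \mathbf{\phi}^{(m)}\right\Vert =1$. For the second term, plugging in the definition of $\alpha^{(m-1)}_{ij}$ one gets a sum over $r\leq m-1$ of $\left\langle \mathbf{\phi}^{(r)},\mathbf{\phi}^{(m)}\right\rangle ^{2}$, which vanishes by the orthogonality part of Lemma \ref{Le_basic1} a).

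Therefore the conditional variance equals $1/N$ identically, and conditionally on $\mathcal{G}_{m-1}$ the random variable is $\mathcal{N}(0,1/N)$. Since this conditional law is deterministic, the unconditional law is the same. There is no real obstacle here; the only slightly delicate point is recognizing that the four-index sum collapses thanks to the orthogonality of the $\mathbf{\phi}^{(r)}$, which is precisely why the Gram-Schmidt style construction in (\ref{phik_construction}) was set up.
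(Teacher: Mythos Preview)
Your proof is correct and follows essentially the same approach as the paper: both argue that $\langle\mathbf{\phi}^{(m)},\mathbf{\xi}^{(m)}\rangle$ is conditionally Gaussian given $\mathcal{G}_{m-1}$ with a deterministic conditional variance equal to $1/N$, and hence unconditionally $\mathcal{N}(0,1/N)$. The only cosmetic difference is that you route the variance computation through the already-derived identity (\ref{xi_cov}) for $\mathbb{E}_{m-1}\xi_i^{(m)}\xi_j^{(m)}$, whereas the paper appeals directly to the covariance formula (\ref{g_covariances}) for $\mathbf{g}^{(m)}$; the orthogonality of the $\mathbf{\phi}^{(r)}$ kills the $\alpha^{(m-1)}$ contribution in either case.
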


\begin{proof}%
\[
\left\langle \mathbf{\phi}^{\left(  m\right)  },\mathbf{\xi}^{\left(
m\right)  }\right\rangle =\frac{1}{N}\sum_{i,j}\phi_{i}^{\left(  m\right)
}g_{ij}^{\left(  m\right)  }\phi_{j}^{\left(  m\right)  }%
\]
$\mathbf{\phi}^{\left(  m\right)  }$ is $\mathcal{F}_{m-1}$-m.b., and
$\mathbf{g}^{\left(  m\right)  }$ is conditionally Gaussian given
$\mathcal{F}_{m-1}$ with covariances given by (\ref{g_covariances})$.$
Computing the conditional variance, using this expression, yields%
\[
\mathbb{E}_{m-1}\left(  \frac{1}{N}\sum_{i,j}\phi_{i}^{\left(  m\right)
}g_{ij}^{\left(  m\right)  }\phi_{j}^{\left(  m\right)  }\right)  ^{2}%
=\frac{1}{N}.
\]
This proves the claim.
\end{proof}

Below, we denote by $\chi_{n}\left(  x\right)  ,\ x\geq0,$ the density of the
$\chi^{2}$-distribution of degree $n,$ i.e.%
\[
\chi_{n}\left(  x\right)  :=\frac{x^{n/2-1}\mathrm{e}^{-x/2}}{2^{n/2}%
\Gamma\left(  n/2\right)  },
\]
$\Gamma$ here gamma function, and%
\[
\Xi_{n}\left(  x\right)  :=\int_{x}^{\infty}\chi_{n}\left(  y\right)  dy.
\]
For fixed $n$, $\Xi_{n}\left(  x\right)  $ is exponentially decaying for
$x\rightarrow\infty.$

\begin{lemma}
\label{Le_chi}Let $\mathbf{y}^{\left(  1\right)  },\ldots,\mathbf{y}^{\left(
k\right)  }$ be orthonormal vectors in $\mathbb{R}^{N},$ and $\mathbf{X}$ be a
Gaussian random variable with covariances%
\[
\mathbb{E}X_{i}X_{j}=\delta_{ij}-\frac{1}{N}\sum_{s=1}^{k_{1}}y_{i}^{\left(
s\right)  }y_{j}^{\left(  s\right)  }+\frac{1}{N}\sum_{s=k_{1}+1}^{k}%
y_{i}^{\left(  s\right)  }y_{j}^{\left(  s\right)  }%
\]
with $1\leq k_{1}\leq k,$ $k_{2}:=k-k_{1}.$

\begin{enumerate}
\item[a)] If $f_{i}:\mathbb{R\rightarrow R}$ are Lipshitz continuous functions
with%
\[
\lambda:=\sup_{i}\left\Vert f_{i}\right\Vert _{\mathrm{lip}}<\infty
\]
then%
\begin{align*}
& \mathbb{P}\left(  \left\vert \frac{1}{N}\sum\nolimits_{i=1}^{N}\left[
f_{i}\left(  X_{i}\right)  -Ef_{i}\left(  Z\right)  \right]  \right\vert \geq
t\right) \\
& \leq\Xi_{k_{1}}\left(  \frac{t^{2}N}{9k_{1}\lambda^{2}}\right)  +\Xi_{k_{2}%
}\left(  \frac{t^{2}N}{9k_{2}\lambda^{2}}\right)  +\exp\left[  -t^{2}%
N/\lambda^{2}\right]  .
\end{align*}

\item[b)]
\[
\mathbb{E}\left\Vert \mathbf{X}\right\Vert ^{2}=n-k+2k_{2},
\]
and%
\[
\mathbb{P}\left(  \left\Vert \mathbf{X}\right\Vert ^{2}\geq t\right)
=\int_{0}^{tN/2}\Xi_{N-k}\left(  Nt-2x\right)  \chi_{k_{2}}\left(  x\right)
dx
\]

\end{enumerate}
\end{lemma}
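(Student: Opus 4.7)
The plan is to realize $\mathbf{X}$ explicitly as a linear combination of two independent standard Gaussian vectors and then reduce part~(a) to Gaussian concentration for Lipschitz functions combined with $\chi^{2}$-tail estimates, and part~(b) to an exact distributional identification. Let $P_{1}:=\frac{1}{N}\sum_{s=1}^{k_{1}}\mathbf{y}^{(s)}(\mathbf{y}^{(s)})^{T}$ and $P_{2}:=\frac{1}{N}\sum_{s=k_{1}+1}^{k}\mathbf{y}^{(s)}(\mathbf{y}^{(s)})^{T}$. The orthonormality of the $\mathbf{y}^{(s)}$ in the paper's inner product translates to $\sum_{i}y_{i}^{(s)}y_{i}^{(t)}=N\delta_{st}$, so $P_{1},P_{2}$ are Euclidean orthogonal projections of ranks $k_{1},k_{2}$ onto mutually orthogonal subspaces, and $Q:=I-P_{1}-P_{2}$ projects onto the remaining $(N-k)$-dimensional subspace. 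With independent standard Gaussians $\mathbf{Z},\mathbf{Z}'$ on $\mathbb{R}^{N}$, the coupling $\mathbf{X}:=(I-P_{1})\mathbf{Z}+P_{2}\mathbf{Z}'$ is Gaussian with covariance $(I-P_{1})^{2}+P_{2}^{2}=(I-P_{1})+P_{2}=I-P_{1}+P_{2}$, matching the hypothesis.

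For part~(a), split the quantity of interest as $T_{1}+T_{2}$ with $T_{1}:=\frac{1}{N}\sum_{i}[f_{i}(X_{i})-f_{i}(Z_{i})]$ and $T_{2}:=\frac{1}{N}\sum_{i}[f_{i}(Z_{i})-Ef_{i}(Z)]$. The map $\mathbf{z}\mapsto\frac{1}{N}\sum_{i}f_{i}(z_{i})$ has Euclidean Lipschitz constant at most $\lambda/\sqrt{N}$, so standard Gaussian concentration gives a bound of the form $\exp(-ct^{2}N/\lambda^{2})$ on $\mathbb{P}(|T_{2}|\geq t/3)$, producing (up to adjustment of the universal constant) the third summand of the claim. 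For $T_{1}$, Lipschitz continuity and the triangle inequality yield $|T_{1}|\leq\frac{\lambda}{N}\|P_{1}\mathbf{Z}\|_{1}+\frac{\lambda}{N}\|P_{2}\mathbf{Z}'\|_{1}$. Writing $(P_{1}\mathbf{Z})_{i}=\frac{1}{\sqrt{N}}\sum_{s=1}^{k_{1}}y_{i}^{(s)}V_{s}$ with $V_{s}:=N^{-1/2}\sum_{j}y_{j}^{(s)}Z_{j}$ independent standard Gaussians (by orthonormality), and applying Cauchy--Schwarz first in $i$ (using $\sum_{i}|y_{i}^{(s)}|\leq\sqrt{N}\sqrt{N}=N$) and then in $s$, one gets $\frac{1}{N}\|P_{1}\mathbf{Z}\|_{1}\leq\sqrt{k_{1}/N}\,\sqrt{V_{1}^{2}+\cdots+V_{k_{1}}^{2}}$ with the radicand $\chi^{2}_{k_{1}}$-distributed; the estimate for $P_{2}\mathbf{Z}'$ is symmetric and independent. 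Splitting the event $\{|T_{1}|\geq 2t/3\}$ between the two pieces yields the two $\Xi_{k_{j}}(t^{2}N/(9k_{j}\lambda^{2}))$ contributions.

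For part~(b), use $I-P_{1}=Q+P_{2}$ to rewrite $\mathbf{X}=Q\mathbf{Z}+P_{2}(\mathbf{Z}+\mathbf{Z}')$. The ranges of $Q$ and $P_{2}$ are orthogonal, and $Q\mathbf{Z}$ is independent of both $P_{2}\mathbf{Z}$ (orthogonal projections of a single standard Gaussian are independent) and $P_{2}\mathbf{Z}'$ (since $\mathbf{Z}'\perp\mathbf{Z}$); hence $Q\mathbf{Z}$ and $P_{2}(\mathbf{Z}+\mathbf{Z}')$ are both geometrically and stochastically orthogonal. Since $P_{2}(\mathbf{Z}+\mathbf{Z}')$ is Gaussian with covariance $2P_{2}$, we obtain
$$\|\mathbf{X}\|^{2}=\|Q\mathbf{Z}\|^{2}+\|P_{2}(\mathbf{Z}+\mathbf{Z}')\|^{2}\stackrel{d}{=}\chi^{2}_{N-k}+2\chi^{2}_{k_{2}}$$
with independent $\chi^{2}$-factors. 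Taking expectations gives $(N-k)+2k_{2}$, and conditioning on the $\chi^{2}_{k_{2}}$ factor and integrating out the other one produces the stated convolution formula for the tail (the factor of $N$ in the integrand corresponds to $\|\cdot\|$ being read in the paper's normalized inner product). The only real obstacle is cosmetic: arranging the Cauchy--Schwarz applications in part~(a) so that the $k_{j}$-dependence of the argument of $\Xi_{k_{j}}$ comes out as $t^{2}N/(9k_{j}\lambda^{2})$ rather than the stronger $k_{j}$-free bound one would obtain via $\|v\|_{1}\leq\sqrt{N}\|v\|_{2}$, and being consistent about the normalization of $\|\cdot\|$ between parts (a) and (b).
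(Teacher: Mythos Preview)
Your proof is correct and follows essentially the same strategy as the paper: both arguments realize $\mathbf{X}$ via a coupling with a standard Gaussian vector so that the discrepancy is controlled by projections onto the spans of the $\mathbf{y}^{(s)}$, then bound the resulting terms by $\chi^{2}_{k_{1}}$ and $\chi^{2}_{k_{2}}$ tails together with Gaussian concentration for the Lipschitz map $\mathbf{z}\mapsto N^{-1}\sum_{i}f_{i}(z_{i})$; for part~(b) both identify the covariance spectrum $\{0,1,2\}$ with multiplicities $k_{1},N-k,k_{2}$ to get the $\chi^{2}_{N-k}+2\chi^{2}_{k_{2}}$ decomposition. The only cosmetic difference is that the paper adds auxiliary Gaussians to $\mathbf{X}$ and uses an equality in law, whereas you construct $\mathbf{X}=(I-P_{1})\mathbf{Z}+P_{2}\mathbf{Z}'$ directly, which is an equivalent (and arguably cleaner) realization of the same coupling.
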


\begin{proof}
a) Choose i.i.d. standard Gaussian variables $U_{1},\ldots,U_{N},$ and
$Z_{1},\ldots,Z_{k}.$ Then $\mathbf{Y}$ with%
\[
Y_{i}:=X_{i}+\frac{1}{\sqrt{N}}\sum_{s=1}^{k_{1}}y_{i}^{\left(  s\right)
}Z_{s}%
\]
has the same distribution as $\mathbf{Y}^{\prime}$ given by%
\[
Y_{i}^{\prime}:=U_{i}+\frac{1}{\sqrt{N}}\sum_{s=k_{1}+1}^{k}y_{i}^{\left(
s\right)  }Z_{s}.
\]
Then%
\begin{align*}
& \mathbb{P}\left(  \left\vert \frac{1}{N}\sum\nolimits_{i=1}^{N}\left[
f_{i}\left(  X_{i}\right)  -Ef_{i}\left(  Z\right)  \right]  \right\vert \geq
t\right) \\
& \leq\mathbb{P}\left(  \left\vert \frac{1}{N}\sum\nolimits_{i=1}^{N}\left[
f_{i}\left(  X_{i}\right)  -f_{i}\left(  Y_{i}\right)  \right]  \right\vert
\geq t/3\right) \\
& +\mathbb{P}\left(  \left\vert \frac{1}{N}\sum\nolimits_{i=1}^{N}\left[
f_{i}\left(  Y_{i}^{\prime}\right)  -f_{i}\left(  U_{i}\right)  \right]
\right\vert \geq t/3\right) \\
& +\mathbb{P}\left(  \left\vert \frac{1}{N}\sum\nolimits_{i=1}^{N}\left[
f_{i}\left(  U_{i}\right)  -Ef_{i}\left(  Z\right)  \right]  \right\vert \geq
t/3\right)  .
\end{align*}
Estimating the first two parts using the orthonormality of the $\mathbf{y}%
^{\left(  s\right)  },$ and the last summand using Gaussian isoperimetry,
leads to the desired bound.

b) The covariance matrix $\left(  \mathbb{E}X_{i}X_{j}\right)  _{i,j}$ has
spectrum $\left\{  0,1,2\right\}  $ with multiplicities $k_{1},N-k,k_{2}.$
From that, the estimate follows.
\end{proof}

\begin{lemma}
\label{Le_gm_to_ze}If (\ref{AT}) is satisfied, then $\lim_{k\rightarrow\infty
}\lim_{N\rightarrow\infty}\mathbb{E}\left\Vert \mathbf{g}^{\left(  k\right)
}\mathbf{m}^{\left(  k\right)  }\right\Vert ^{2}=0.$
\end{lemma}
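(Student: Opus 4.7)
The plan is to exploit the key structural feature of the construction: by Lemma \ref{Le_basic1} b), the modified matrix $\mathbf{g}^{\left(k\right)}$ annihilates $\mathbf{\phi}^{\left(s\right)}$ for every $s<k,$ while by construction $\mathbf{g}^{\left(k\right)}\mathbf{\phi}^{\left(k\right)}=\mathbf{\xi}^{\left(k\right)}.$ Combined with the defining relation (\ref{phik_construction}) of $\mathbf{\phi}^{\left(k\right)},$ this forces $\mathbf{g}^{\left(k\right)}\mathbf{m}^{\left(k\right)}$ to live in a one-dimensional subspace whose scalar factor is controlled by the convergence $\Gamma_{k}^{2}\rightarrow q$ under the AT-condition.

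First I would decompose $\mathbf{m}^{\left(k\right)}$ along the orthonormal system $\mathbf{\phi}^{\left(1\right)},\ldots,\mathbf{\phi}^{\left(k\right)}$ (orthonormality by Lemma \ref{Le_basic1} a)): reading (\ref{phik_construction}) with $k-1$ in place of $k,$ one has
\[
\mathbf{m}^{\left(k\right)}=\sum_{s=1}^{k-1}\left\langle \mathbf{m}^{\left(k\right)},\mathbf{\phi}^{\left(s\right)}\right\rangle \mathbf{\phi}^{\left(s\right)}+c_{k-1}\mathbf{\phi}^{\left(k\right)},
\]
where $c_{k-1}:=\bigl\|\mathbf{m}^{\left(k\right)}-\sum_{s=1}^{k-1}\langle \mathbf{m}^{\left(k\right)},\mathbf{\phi}^{\left(s\right)}\rangle \mathbf{\phi}^{\left(s\right)}\bigr\|$ is $\mathcal{G}_{k-1}$-measurable. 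Applying $\mathbf{g}^{\left(k\right)}$ and using Lemma \ref{Le_basic1} b) to kill all but the last term yields
\[
\mathbf{g}^{\left(k\right)}\mathbf{m}^{\left(k\right)}=c_{k-1}\,\mathbf{\xi}^{\left(k\right)},\qquad \bigl\Vert \mathbf{g}^{\left(k\right)}\mathbf{m}^{\left(k\right)}\bigr\Vert ^{2}=c_{k-1}^{2}\bigl\Vert \mathbf{\xi}^{\left(k\right)}\bigr\Vert ^{2}.
\]

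Next I would take expectations. Since $c_{k-1}$ is $\mathcal{G}_{k-1}$-measurable, I can use the conditional covariance formula (\ref{xi_cov}) to compute
\[
\mathbb{E}_{k-1}\bigl\Vert \mathbf{\xi}^{\left(k\right)}\bigr\Vert ^{2}=\frac{1}{N}\sum_{i=1}^{N}\bigl(1-\alpha_{ii}^{\left(k-1\right)}\bigr)=1-\frac{k-1}{N},
\]
using $\|\mathbf{\phi}^{\left(r\right)}\|^{2}=1.$ Hence
\[
\mathbb{E}\bigl\Vert \mathbf{g}^{\left(k\right)}\mathbf{m}^{\left(k\right)}\bigr\Vert ^{2}=\Bigl(1-\frac{k-1}{N}\Bigr)\mathbb{E}c_{k-1}^{2}.
\]
By Pythagoras and Proposition \ref{Prop_inner_products}, $c_{k-1}^{2}=\|\mathbf{m}^{\left(k\right)}\|^{2}-\sum_{s=1}^{k-1}\langle \mathbf{m}^{\left(k\right)},\mathbf{\phi}^{\left(s\right)}\rangle^{2}\simeq q-\Gamma_{k-1}^{2};$ since $c_{k-1}^{2}\leq \|\mathbf{m}^{\left(k\right)}\|^{2}\leq 1$ is uniformly bounded, the $\simeq$-relation upgrades to $L^{1}$-convergence, giving $\lim_{N\rightarrow\infty}\mathbb{E}c_{k-1}^{2}=q-\Gamma_{k-1}^{2}.$ Finally, Lemma \ref{Le_sequences&AT} b) asserts that under (\ref{AT}) one has $\sum_{j=1}^{\infty}\gamma_{j}^{2}=q,$ hence $\Gamma_{k-1}^{2}\rightarrow q$ as $k\rightarrow\infty,$ which closes the argument.

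There is essentially no hard step: the orthogonality provided by Lemma \ref{Le_basic1} collapses the problem to estimating a single scalar $c_{k-1}$ whose asymptotics have already been pinned down in Proposition \ref{Prop_inner_products}. The only mild care required is the passage from $\simeq$ to $L^{1}$-convergence of $c_{k-1}^{2},$ which is immediate from the uniform bound $c_{k-1}^{2}\leq 1.$
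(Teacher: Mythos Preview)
Your argument is correct and is essentially the paper's own proof. The paper computes the conditional covariance of $\mathbf{g}^{(k)}\mathbf{m}^{(k)}$ directly from Proposition~\ref{Prop_main_gk} and arrives at the same identity $\mathbb{E}_{k-1}\|\mathbf{g}^{(k)}\mathbf{m}^{(k)}\|^{2}=\bigl(\|\mathbf{m}^{(k)}\|^{2}-\sum_{s=1}^{k-1}\langle\mathbf{m}^{(k)},\mathbf{\phi}^{(s)}\rangle^{2}\bigr)$ (up to the harmless factor $1-(k-1)/N$), then invokes Proposition~\ref{Prop_inner_products} and Lemma~\ref{Le_sequences&AT} exactly as you do; your route via the explicit algebraic identity $\mathbf{g}^{(k)}\mathbf{m}^{(k)}=c_{k-1}\mathbf{\xi}^{(k)}$ is a slightly cleaner way to reach the same formula.
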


\begin{proof}
As $\mathbf{m}^{\left(  k\right)  }$ is $\mathcal{G}_{k-1}$-m.b.,
$\mathbf{g}^{\left(  k\right)  }\mathbf{m}^{\left(  k\right)  }$ is
conditionally Gaussian with covariances, using Proposition \ref{Prop_main_gk},%
\begin{align}
& \mathbb{E}_{k-1}\left(  \left(  \mathbf{g}^{\left(  k\right)  }%
\mathbf{m}^{\left(  k\right)  }\right)  _{i}\left(  \mathbf{g}^{\left(
k\right)  }\mathbf{m}^{\left(  k\right)  }\right)  _{s}\right)
\label{gkmk_cov}\\
& =\left[  \delta_{is}-\alpha_{is}^{\left(  k-1\right)  }\right]  \left[
\left\Vert \mathbf{m}^{\left(  k\right)  }\right\Vert ^{2}-\sum\nolimits_{m=1}%
^{k-1}\left\langle \mathbf{m}^{\left(  k\right)  },\mathbf{\phi}^{\left(
m\right)  }\right\rangle ^{2}\right]  .\nonumber
\end{align}
Using Lemma \ref{Le_chi}, one gets%
\begin{align*}
\mathbb{E}_{k-1}\left\Vert \mathbf{g}^{\left(  k\right)  }\mathbf{m}^{\left(
k\right)  }\right\Vert ^{2}  & =\left[  \left\Vert \mathbf{m}^{\left(
k\right)  }\right\Vert ^{2}-\sum\nolimits_{m=1}^{k-1}\left\langle
\mathbf{m}^{\left(  k\right)  },\mathbf{\phi}^{\left(  m\right)
}\right\rangle ^{2}\right] \\
\mathbb{E}\left\Vert \mathbf{g}^{\left(  k\right)  }\mathbf{m}^{\left(
k\right)  }\right\Vert ^{2}  & =\mathbb{E}\left[  \left\Vert \mathbf{m}%
^{\left(  k\right)  }\right\Vert ^{2}-\sum\nolimits_{m=1}^{k-1}\left\langle
\mathbf{m}^{\left(  k\right)  },\mathbf{\phi}^{\left(  m\right)
}\right\rangle ^{2}\right]  .
\end{align*}
By Proposition \ref{Prop_inner_products}, the rhs converges, as $N\rightarrow
\infty,$ to $q-\sum_{m=1}^{k-1}\gamma_{m}^{2},$ which converges to $0,$ as
$k\rightarrow\infty,$ by (\ref{AT}) and Lemma \ref{Le_sequences&AT}.
\end{proof}

\begin{lemma}
\label{Le_means_h}For any function $f:\mathbb{R\rightarrow R}$ which satisfies
$\left\vert f\left(  x\right)  \right\vert \leq C\left(  1+\left\vert
x\right\vert \right)  $ for some $C,$ and with $\left\Vert f\right\Vert
_{\mathrm{lip}}<\infty,$ and any $k\geq2,$ one has%
\[
\lim_{N\rightarrow\infty}\frac{1}{N}\sum_{i=1}^{N}f\left(  h_{i}^{\left(
k\right)  }\right)  =Ef\left(  h+\beta\sqrt{q}Z\right)
\]
in $L_{1}.$
\end{lemma}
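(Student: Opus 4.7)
The plan is to follow the iterative conditioning argument already used in the proof of Proposition \ref{Prop_inner_products}, but now with the bounded quadratic integrand $\operatorname{Th}^2$ replaced by the Lipschitz, linearly growing $f$. Unpacking (\ref{hk_Def}) gives
\[
h_i^{(k)} = h + \beta\sum_{s=1}^{k-2}\gamma_s\zeta_i^{(s)} + \beta\sqrt{q-\Gamma_{k-2}^2}\,\zeta_i^{(k-1)},
\]
and the idea is to remove the $\mathbf{\zeta}^{(s)}$ one at a time, from $s=k-1$ down to $s=1$, using Lemma \ref{Le_chi} applied to the conditional Gaussian law of $\mathbf{\zeta}^{(s)}$ given $\mathcal{G}_{s-1}$ provided by Proposition \ref{Prop_main_gk} c).

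At the first step, conditioning on $\mathcal{G}_{k-2}$ renders every term in $h_i^{(k)}$ except $\mathbf{\zeta}^{(k-1)}$ measurable. The family
\[
f_i(x) := f\!\left(h + \beta\sum_{s=1}^{k-2}\gamma_s\zeta_i^{(s)} + \beta\sqrt{q-\Gamma_{k-2}^2}\,x\right)
\]
is Lipschitz in $x$ uniformly in $i$, with constant at most $\beta\sqrt{q}\,\|f\|_{\mathrm{lip}}$, and the conditional covariance of $\mathbf{\zeta}^{(k-1)}$ is precisely of the type handled by Lemma \ref{Le_chi} (with $k_1 = k-2$, $k_2 = 1$). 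That lemma then gives
\[
\frac{1}{N}\sum_{i=1}^{N} f(h_i^{(k)}) \simeq \frac{1}{N}\sum_{i=1}^{N} Ef_i(Z_{k-1}),
\]
where $Z_{k-1}$ is an independent standard Gaussian and $\simeq$ is the notation introduced in the paragraph preceding Proposition \ref{Prop_inner_products}. I would then iterate: at the $j$-th round, condition on $\mathcal{G}_{k-1-j}$, replace $\mathbf{\zeta}^{(k-j)}$ by an independent standard Gaussian $Z_{k-j}$ via Lemma \ref{Le_chi}, and collapse the accumulated independent Gaussian pieces using the identity $\gamma_{k-j}^2 + (q-\Gamma_{k-j}^2) = q - \Gamma_{k-j-1}^2$. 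After $k-1$ rounds the $i$-dependence has disappeared entirely, and one lands at the deterministic quantity $Ef(h + \beta\sqrt{q}\,Z)$.

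There is no serious obstacle beyond bookkeeping. The $L_1$ conclusion is automatic from $\simeq$: a tail of the form $\mathbb{P}(|X_N - Y_N| \geq t) \leq C\exp[-Ct^2 N]$ already implies $\mathbb{E}|X_N - Y_N| = O(N^{-1/2})$. One just has to verify that the Lipschitz constant of the residual integrand stays uniformly bounded at each stage (which it does, because $\gamma_j \leq \sqrt{q}$ by Lemma \ref{Le_sequences&AT} b)), and that the $k-1$ accumulated $\simeq$-errors still sum to a quantity vanishing as $N\to\infty$; the resulting constant depends on $k, \beta, h, \|f\|_{\mathrm{lip}}$ only. The linear-growth hypothesis on $f$ enters just to ensure that the limit $Ef(h+\beta\sqrt{q}Z)$ is well-defined.
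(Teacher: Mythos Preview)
Your proposal is correct and follows essentially the same successive-conditioning argument as the paper's own proof: replace the $\mathbf{\zeta}^{(s)}$ one at a time, from $s=k-1$ down to $s=1$, by independent standard Gaussians via Lemma~\ref{Le_chi}~a) applied to the conditional covariance in Proposition~\ref{Prop_main_gk}~c), then collapse the resulting Gaussian sum into $\sqrt{q}\,Z$. If anything, your version is slightly cleaner: the paper routes the first step through the covariances of $\mathbf{g}^{(k-1)}\mathbf{m}^{(k-1)}$ in (\ref{gkmk_cov}) and then needs Proposition~\ref{Prop_inner_products} to reduce the random variance factor to $q-\Gamma_{k-2}^2$, whereas you work directly with $\mathbf{\zeta}^{(k-1)}$ whose conditional covariance already has exactly the structure of Lemma~\ref{Le_chi}, so that extra step is unnecessary.
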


\begin{proof}
For $k=2,$ this is immediate from the definition of $\mathbf{h}^{\left(
2\right)  }$ and Lemma \ref{Le_chi} a). So, we assume $k\geq3.$ Conditionally
on $\mathcal{G}_{k-2},$ $\mathbf{g}^{\left(  k-1\right)  }\mathbf{m}^{\left(
k-1\right)  }$ is Gaussian with the covariances given in (\ref{gkmk_cov}). For
abbreviation, write%
\[
Y_{i}^{\left(  t\right)  }:=h+\beta\sum_{s=1}^{t}\gamma_{s}\zeta_{i}^{\left(
s\right)  }%
\]
As $\left\Vert \mathbf{m}^{\left(  k\right)  }\right\Vert ^{2}$ and
$\left\langle \mathbf{m}^{\left(  k\right)  },\mathbf{\phi}^{\left(  m\right)
}\right\rangle ^{2}$ are bounded (by $1$), it follows from Lemma \ref{Le_chi}
a) that%
\[
\frac{1}{N}\sum_{i=1}^{N}\left[  f\left(  h_{i}^{\left(  k\right)  }\right)
-\mathbb{E}_{k-2}f\left(  h_{i}^{\left(  k\right)  }\right)  \right]
\rightarrow0
\]
in $L_{1},$ as $N\rightarrow\infty,$ and using Proposition
\ref{Prop_inner_products}, one gets%
\[
\frac{1}{N}\sum_{i=1}^{N}\left[  \mathbb{E}_{k-2}f\left(  h_{i}^{\left(
k\right)  }\right)  -Ef\left(  Y_{i}^{\left(  k-2\right)  }+\sqrt
{q-\sum\nolimits_{s=1}^{k-2}\gamma_{s}^{2}}Z\right)  \right]  \rightarrow0
\]
Next, in the same way, one obtains%
\begin{multline*}
\frac{1}{N}\sum_{i=1}^{N}\Big [Ef\left(  Y_{i}^{\left(  k-2\right)  }%
+\sqrt{q-\sum\nolimits_{s=1}^{k-2}\gamma_{s}^{2}}Z\right) \\
-Ef\left(  Y_{k-3}+\gamma_{k-2}Z_{k-2}+\sqrt{q-\sum\nolimits_{j=1}^{k-2}%
\gamma_{j}^{2}}Z\right)  \Big ]\rightarrow0.
\end{multline*}
Going on in the same way, and observing that%
\[
\sum_{s=1}^{k-2}\gamma_{s}Z_{s}+\sqrt{q-\sum\nolimits_{j=1}^{k-2}\gamma
_{j}^{2}}Z
\]
is identical in law as $\sqrt{q}Z,$ one gets%
\[
\lim_{N\rightarrow\infty}\mathbb{E}\left\vert \frac{1}{N}\sum\nolimits_{i=1}%
^{N}f\left(  h_{i}^{\left(  k\right)  }\right)  -Ef\left(  h+\beta\sqrt
{q}Z\right)  \right\vert =0.
\]

\end{proof}

\begin{lemma}
\label{Le_zeta}

\begin{enumerate}
\item[a)] For any $k$%
\[
\sup_{N}\mathbb{E}\left\Vert \mathbf{\zeta}^{\left(  k\right)  }\right\Vert
^{4}<\infty.
\]

\item[b)] For any $k$ and $\varepsilon>0$%
\[
\lim_{N\rightarrow\infty}\mathbb{P}\left(  \left\Vert \mathbf{\zeta}^{\left(
k\right)  }\right\Vert ^{2}\geq1+\varepsilon\right)  =0.
\]

\item[c)] For $s\neq k,$ and $\varepsilon>0$%
\[
\lim_{N\rightarrow\infty}\mathbb{P}\left(  \left\vert \left\langle
\mathbf{\zeta}^{\left(  k\right)  },\mathbf{\zeta}^{\left(  s\right)
}\right\rangle \right\vert \geq\varepsilon\right)  =0
\]

\end{enumerate}
\end{lemma}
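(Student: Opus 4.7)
The whole lemma follows from exploiting the explicit conditional Gaussian structure of $\mathbf{\zeta}^{(k)}$ given $\mathcal{G}_{k-1}$ from Proposition \ref{Prop_main_gk} c). The plan is first to identify the $N \times N$ conditional covariance matrix $C$ of $\mathbf{\zeta}^{(k)}$ as $C = I - P_{k-1} + \frac{1}{N}\mathbf{\phi}^{(k)}(\mathbf{\phi}^{(k)})^T$, where $P_{k-1}$ is the Euclidean-orthogonal projection onto $\mathrm{span}\{\mathbf{\phi}^{(1)},\ldots,\mathbf{\phi}^{(k-1)}\}$. Since the $\mathbf{\phi}^{(r)}$, $r \leq k$, are Euclidean-orthogonal with squared Euclidean norm $N$ (Lemma \ref{Le_basic1} a)), the spectrum of $C$ consists of $0$ with multiplicity $k-1$ (on the range of $P_{k-1}$), the value $2$ with multiplicity $1$ (on $\mathbf{\phi}^{(k)}$), and $1$ with multiplicity $N-k$ (on the orthogonal complement).

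For a), this eigenstructure gives, conditionally and in distribution, $\sum_i (\zeta_i^{(k)})^2 = 2U^2 + W$ with $U$ standard Gaussian and $W$ an independent $\chi^2_{N-k}$. Hence $\|\mathbf{\zeta}^{(k)}\|^2 = N^{-1}(2U^2 + W)$, and a direct second-moment computation yields $\mathbb{E}_{k-1}\|\mathbf{\zeta}^{(k)}\|^4 = N^{-2}\bigl((N-k)^2 + 6(N-k) + 12\bigr)$, which is uniformly bounded in $N$. Taking expectations proves a). For b), the same decomposition gives $\mathbb{E}_{k-1}\|\mathbf{\zeta}^{(k)}\|^2 = 1 - (k-2)/N$ and $\mathrm{Var}_{k-1}\|\mathbf{\zeta}^{(k)}\|^2 = O(1/N)$, so conditional Chebyshev and taking expectations yield the claim.

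For c), WLOG $s < k$, so $\mathbf{\zeta}^{(s)}$ is $\mathcal{G}_s \subset \mathcal{G}_{k-1}$-measurable. Then, given $\mathcal{G}_{k-1}$, $\langle \mathbf{\zeta}^{(k)}, \mathbf{\zeta}^{(s)}\rangle$ is a centered Gaussian with variance $N^{-2}\,(\mathbf{\zeta}^{(s)})^T C \mathbf{\zeta}^{(s)}$. Expanding this quadratic form and dropping the non-positive $-\sum_{r=1}^{k-1}N\langle \mathbf{\zeta}^{(s)}, \mathbf{\phi}^{(r)}\rangle^2$ contribution, one bounds the conditional variance by $N^{-1}\bigl(\|\mathbf{\zeta}^{(s)}\|^2 + \langle \mathbf{\phi}^{(k)}, \mathbf{\zeta}^{(s)}\rangle^2\bigr) \leq 2 N^{-1}\|\mathbf{\zeta}^{(s)}\|^2$ via Cauchy--Schwarz and $\|\mathbf{\phi}^{(k)}\| = 1$. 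Conditional Chebyshev gives $\mathbb{P}_{k-1}\bigl(|\langle \mathbf{\zeta}^{(k)}, \mathbf{\zeta}^{(s)}\rangle| \geq \varepsilon\bigr) \leq 2\|\mathbf{\zeta}^{(s)}\|^2/(N\varepsilon^2)$; taking expectations and invoking part a) (applied with $k$ replaced by $s$) drives the probability to $0$.

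There is no real obstacle: everything reduces to reading off the spectrum of $C$ from the orthonormality of the $\mathbf{\phi}^{(r)}$ and then to one-line second-moment/Chebyshev estimates. The mildly delicate point is that $C$ is not simply a projection but a projection perturbed by the rank-one term $N^{-1}\mathbf{\phi}^{(k)}(\mathbf{\phi}^{(k)})^T$ of eigenvalue $1$ on $\mathbf{\phi}^{(k)}$; this produces the eigenvalue $2$ but changes nothing essential in the asymptotics because it is a rank-one contribution on the scale where $\|\mathbf{\zeta}^{(k)}\|^2$ averages over $\Theta(N)$ directions.
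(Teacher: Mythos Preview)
Your proof is correct and takes essentially the same approach as the paper: both identify the spectrum $\{0,1,2\}$ of the conditional covariance matrix of $\mathbf{\zeta}^{(k)}$ (the paper packages this in Lemma~\ref{Le_chi}~b)) to handle a) and b), and for c) both compute the conditional variance of $\langle\mathbf{\zeta}^{(k)},\mathbf{\zeta}^{(s)}\rangle$ and bound it by $2N^{-1}\|\mathbf{\zeta}^{(s)}\|^2$. The only cosmetic differences are that you compute the fourth moment explicitly rather than via the tail formula of Lemma~\ref{Le_chi}~b), and you use Chebyshev where the paper uses the Gaussian tail $1-\Phi$.
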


\begin{proof}
By the conditional covariances of $\mathbf{\zeta}^{\left(  k\right)  }$ given
in Proposition \ref{Prop_main_gk} c), $\mathbf{\zeta}^{\left(  k\right)  }$
has, conditionally on $\mathcal{G}_{k-1},$ the covariance structure of
$\mathbf{X}$ in Lemma \ref{Le_chi}. a) and b) of the present lemma are then
immediate from b) of Lemma \ref{Le_chi}.

For c), we assume $s<k.$ Then $\left\langle \mathbf{\zeta}^{\left(  k\right)
},\mathbf{\zeta}^{\left(  s\right)  }\right\rangle $ is Gaussian, conditioned
on $\mathcal{F}_{k-1},$ with conditional variance%
\[
\mathbb{E}_{k-1}\left\langle \mathbf{\zeta}^{\left(  k\right)  }%
,\mathbf{\zeta}^{\left(  s\right)  }\right\rangle ^{2}=s_{N}^{2}\left(
k,s\right)  :=\frac{1}{N}\left[  \left\Vert \zeta^{\left(  s\right)
}\right\Vert ^{2}+\left\langle \mathbf{\zeta}^{\left(  s\right)
},\mathbf{\phi}^{\left(  k\right)  }\right\rangle ^{2}-\sum_{u=1}%
^{k-1}\left\langle \mathbf{\zeta}^{\left(  s\right)  },\mathbf{\phi}^{\left(
u\right)  }\right\rangle ^{2}\right]  ,
\]
and therefore%
\[
\mathbb{P}\left(  \left\vert \left\langle \mathbf{\zeta}^{\left(  k\right)
},\mathbf{\zeta}^{\left(  s\right)  }\right\rangle \right\vert \geq
\varepsilon\right)  =2\mathbb{E}\left(  1-\Phi\left(  \frac{\varepsilon}%
{s_{N}\left(  k,s\right)  }\right)  \right)  ,
\]
$\Phi$ being the distribution function of the standard Gaussian distribution.
Estimating $s_{N}^{2}\leq2\left\Vert \zeta^{\left(  s\right)  }\right\Vert
^{2}/N,$ and using again Lemma \ref{Le_chi} b) proves that the rhs goes to $0$
for $N\rightarrow\infty.$
\end{proof}

\begin{lemma}
\label{Le_gamma_approx}For any $n\geq2$%
\[
\lim_{N\rightarrow\infty}\left\langle \mathbf{\zeta}^{\left(  n-1\right)
},\mathbf{m}^{\left(  n\right)  }\right\rangle =\beta\left(  1-q\right)
\sqrt{q-\sum\nolimits_{j=1}^{m-2}\gamma_{j}^{2}},
\]
and for $1\leq m\leq n-2$%
\[
\lim_{N\rightarrow\infty}\left\langle \mathbf{\zeta}^{\left(  m\right)
},\mathbf{m}^{\left(  n\right)  }\right\rangle =\beta\gamma_{m}\left(
1-q\right)
\]
in $L_{2}\left(  \mathbb{P}\right)  .$
\end{lemma}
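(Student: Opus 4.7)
The plan is to combine Gaussian integration by parts (Stein's identity), applied at the conditioning level where $\mathbf{\zeta}^{(m)}$ becomes conditionally Gaussian via Proposition \ref{Prop_main_gk}(c), with the successive conditional LLN argument already developed in the proof of Proposition \ref{Prop_inner_products}. Writing $h_i^{(n)}$ out from (\ref{hk_Def}), the coefficient of $\zeta_i^{(s)}$ inside the $\tanh$ is $\beta\gamma_s$ for $s\leq n-2$ and $\beta\sqrt{q-\Gamma_{n-2}^2}$ for $s=n-1$. A key structural feature is that $m_i^{(n)}=\tanh(h_i^{(n)})$ depends on $\mathbf{\zeta}^{(s)}$ only through its $i$-th coordinate, which makes Stein's formula collapse to a diagonal sum.

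For the case $m=n-1$, I would condition on $\mathcal{G}_{n-2}$, under which $\mathbf{\zeta}^{(n-1)}$ is Gaussian with the covariance (\ref{zetaij_Cov}). Stein's identity then gives
\[
\mathbb{E}_{n-2}\bigl[\zeta_i^{(n-1)}\tanh(h_i^{(n)})\bigr]=\beta\sqrt{q-\Gamma_{n-2}^2}\,\mathbb{E}_{n-2}\bigl[(\zeta_i^{(n-1)})^2\bigr]\mathbb{E}_{n-2}\bigl[1-\tanh^2(h_i^{(n)})\bigr].
\]
The diagonal $\mathbb{E}_{n-2}[(\zeta_i^{(n-1)})^2]=1+N^{-1}(\phi_i^{(n-1)})^2-\alpha_{ii}^{(n-2)}$ differs from $1$ by a quantity whose $\frac{1}{N}\sum_i$ average is $O(k/N)$ (using $\|\mathbf{\phi}^{(r)}\|=1$). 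Applying the successive conditional LLN argument of Proposition \ref{Prop_inner_products} to $\frac{1}{N}\sum_i(1-\tanh^2(h_i^{(n)}))$ yields the $L^2$-limit $1-E\tanh^2(h+\beta\sqrt{q}Z)=1-q$ via the fixed point equation (\ref{fixed_point}), which produces the target $\beta(1-q)\sqrt{q-\Gamma_{n-2}^2}$ (the $m-2$ appearing in the statement should be read as $n-2$).

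For $1\leq m\leq n-2$, $\mathbf{\zeta}^{(m)}$ is $\mathcal{G}_m$-measurable and thus not Gaussian under $\mathcal{G}_{n-2}$, so Stein must be applied at the $\mathcal{G}_{m-1}$ level. I would first descend from $\mathcal{G}_{n-2}$ to $\mathcal{G}_m$ by the successive approximation of Proposition \ref{Prop_inner_products}, replacing $\mathbf{\zeta}^{(s)}$ for $s=n-1,n-2,\ldots,m+1$ by independent Gaussians and consolidating them, via the identity $\sum_{s=m+1}^{n-2}\gamma_s^2+(q-\Gamma_{n-2}^2)=q-\Gamma_m^2$, into a single auxiliary Gaussian $W$ of variance $q-\Gamma_m^2$. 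This reduces the problem to estimating
\[
\frac{1}{N}\sum_i\zeta_i^{(m)}E_W\tanh\Bigl(h+\beta\sum_{s=1}^{m-1}\gamma_s\zeta_i^{(s)}+\beta\gamma_m\zeta_i^{(m)}+\beta\sqrt{q-\Gamma_m^2}\,W\Bigr).
\]
Now conditioning on $\mathcal{G}_{m-1}$ and applying Stein's identity to the Gaussian $\mathbf{\zeta}^{(m)}$ extracts the factor $\beta\gamma_m$ from $\tanh'$. A final round of successive replacements (combining $\zeta_i^{(m)},\zeta_i^{(m-1)},\ldots,\zeta_i^{(1)}$ into one Gaussian of total variance $q$) together with (\ref{fixed_point}) converts the remaining empirical average into $\beta\gamma_m(1-q)$.

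The main obstacle is upgrading these mean computations to $L^2$: the map $\mathbf{\zeta}^{(m)}\mapsto\zeta_i^{(m)}\tanh(\cdots)$ has Lipschitz constant proportional to the unbounded $|\zeta_i^{(m)}|$, so Lemma \ref{Le_chi}(a) cannot be applied directly. I would truncate $|\zeta_i^{(m)}|\leq M$, controlling the tail via the $L^4$-bound of Lemma \ref{Le_zeta}(a), and then use Gaussian concentration at each conditioning level. Tracking the $O(k/N)$ diagonal corrections from (\ref{zetaij_Cov}) and the accumulated truncation errors through all the steps is the bookkeeping burden, but all such errors are $o(1)$ at fixed $k$ as $N\to\infty$.
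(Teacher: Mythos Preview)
Your approach is correct and uses the same two ingredients as the paper: the successive conditional LLN replacement of the $\mathbf{\zeta}^{(s)}$ by i.i.d.\ standard Gaussians (as in Proposition~\ref{Prop_inner_products} and Lemma~\ref{Le_means_h}) together with Gaussian integration by parts. The only difference is the order in which these are applied. The paper carries out the full LLN replacement first, so that $\langle\mathbf{\zeta}^{(m)},\mathbf{m}^{(n)}\rangle$ is shown to converge to the scalar expectation $E\bigl[Z_m\operatorname*{Th}\bigl(\sum_{j\leq n-2}\gamma_jZ_j+\sqrt{q-\Gamma_{n-2}^2}\,Z_{n-1}\bigr)\bigr]$, and only then applies partial integration to this limiting expression. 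You instead apply Stein conditionally at the $\mathcal{G}_{m-1}$ level, reducing the problem to an empirical average of $1-\tanh^2(h_i^{(n)})$, and then invoke the LLN machinery. Your ordering has the pleasant feature that after Stein the remaining function is bounded and Lipschitz, so the LLN step is cleaner.

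One remark on your ``main obstacle'': for $m=n-1$ the truncation is unnecessary, because $x\mapsto x\tanh(a+bx)$ is in fact globally Lipschitz (its derivative $\tanh(a+bx)+bx\operatorname{sech}^2(a+bx)$ is bounded uniformly in $a$), so Lemma~\ref{Le_chi}(a) applies directly to $f_i(x)=x\operatorname*{Th}(c_i+\sqrt{q-\Gamma_{n-2}^2}\,x)$. For $m\leq n-2$ the descent steps do involve the unbounded outer factor $\zeta_i^{(m)}$; the paper faces exactly the same issue and simply says ``same argument'', implicitly relying on the fact that $\sup_i|\zeta_i^{(m)}|$ grows only like $\sqrt{\log N}$, which is harmless in the bound of Lemma~\ref{Le_chi}(a). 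Your proposed truncation via Lemma~\ref{Le_zeta}(a) is an equally valid way to handle this and is arguably more transparent.
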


\begin{proof}
This is very similar to the proof of Lemma \ref{Le_means_h} and we will be
brief. The case $n=2$ is straightforward, and we assume $n\geq3$%
\[
\mathbf{m}^{\left(  n\right)  }=\operatorname*{Th}\left(  \sum\nolimits_{j=1}%
^{n-2}\gamma_{j}\mathbf{\zeta}^{\left(  j\right)  }+\mathbf{g}^{\left(
n-1\right)  }\mathbf{m}^{\left(  n-1\right)  }\right)  .
\]
Using Lemma \ref{Le_basic1}, we have%
\[
\mathbf{g}^{\left(  n-1\right)  }\mathbf{m}^{\left(  n-1\right)  }=\left\Vert
\mathbf{m}^{\left(  n-1\right)  }-\sum\nolimits_{j=1}^{n-2}\left\langle
\mathbf{m}^{\left(  n-1\right)  },\mathbf{\phi}^{\left(  j\right)
}\right\rangle \mathbf{\phi}^{\left(  j\right)  }\right\Vert \mathbf{\zeta
}^{\left(  n-1\right)  }%
\]
and by Proposition \ref{Prop_inner_products}, we can replace (in the
$N\rightarrow\infty$ limit) the above norm by $\sqrt{q-\sum_{j=1}^{n-2}%
\gamma_{j}^{2}}.$ Therefore $\left\langle \mathbf{\zeta}^{\left(  n-1\right)
},\mathbf{m}^{\left(  n\right)  }\right\rangle $ behaves in the $N\rightarrow
\infty$ limit similarly to%
\[
\left\langle \mathbf{\zeta}^{\left(  n-1\right)  },\operatorname*{Th}\left(
\sum\nolimits_{j=1}^{n-1}\gamma_{j}\mathbf{\zeta}^{\left(  j\right)  }%
+\sqrt{q-\sum\nolimits_{j=1}^{n-2}\gamma_{j}^{2}}\mathbf{\zeta}^{\left(
n-1\right)  }\right)  \right\rangle .
\]
Arguing in the same way is in the proof of Lemma \ref{Le_means_h}, one sees
that this converges to%
\begin{align*}
& EZ\operatorname*{Th}\left(  \sum\nolimits_{j=1}^{n-1}\gamma_{j}Z_{j}%
+\sqrt{q-\sum\nolimits_{j=1}^{n-2}\gamma_{j}^{2}}Z\right) \\
& =\beta\sqrt{q-\sum\nolimits_{j=1}^{n-2}\gamma_{j}^{2}}\left(
1-E\operatorname*{Th}\nolimits^{2}\left(  \sum\nolimits_{j=1}^{n-1}\gamma
_{j}Z_{j}+\sqrt{q-\sum\nolimits_{j=1}^{n-2}\gamma_{j}^{2}}Z\right)  \right) \\
& =\beta\left(  1-q\right)  \sqrt{q-\sum\nolimits_{j=1}^{n-2}\gamma_{j}^{2}},
\end{align*}
the first equality by Gaussian partial integration. The case where $m\leq n-2
$ is going by the same argument, but where we get from partial integration
$\gamma_{m}$ instead of $\sqrt{q-\sum\nolimits_{j=1}^{n-2}\gamma_{j}^{2}}.$
\end{proof}

\section{Comments}

There are a number of issues and open problems we shortly want to comment on.

\noindent\textbf{On the first moment evaluation: }The key idea proposed here
is to derive the free energy by a conditionally annealed argument, where the
$\sigma$-field for the conditioning is chosen such that the solutions of the
TAP equations are measurable. This can reasonably only be done by an
approximating sequence $\mathbf{m}^{\left(  k\right)  }$ for the TAP
equations, where for fixed $k$ one lets first $N\rightarrow\infty,$ and
afterwards $k\rightarrow\infty.$ For finite $N,$ the TAP equations are not
exactly valid, and we wouldn't know how to characterize $\left\langle
\sigma_{i}\right\rangle $ for finite $N$ without knowing the Gibbs measure
already precisely. Therefore, it would be natural just to condition with
respect to $\sigma\left(  \mathbf{m}^{\left(  k\right)  }\right)  ,$ and try
to prove the corresponding versions of Proposition \ref{Prop_1st_moment} and
\ref{Prop_2nd_moment}. We however didn't see how to do this, and therefore, we
took the $\sigma$-fields, generated by $\mathbf{\zeta}^{\left(  s\right)
},\ s\leq k,$ with respect to which $\mathbf{m}^{\left(  k\right)  }$ is
measurable. This choice may well be \textquotedblleft too
large\textquotedblright, in particular as the $\mathbf{\zeta}^{\left(
s\right)  }$ depend on the starting version of $\mathbf{m}^{\left(  1\right)
}$ which we took just as $\sqrt{q}.$ On the other hand, taking $\sigma$-fields
which are larger than necessary should not do any harm for proving Proposition
\ref{Prop_1st_moment}, except that the computations may become unnecessarily
complicated. Anyway, assuming that the replica symmetric solution is valid in
the full AT-region, it looks to me that (\ref{first_moment}) should be correct
in the full AT-region. This belief is based on the hope that the Morita type
argument could give the evaluation in the full high-temperature region. This
hope is also substantiated by the recent work by Jian Ding and Nike Sun
\cite{DingSun} who, for the Ising perceptron, obtained a one-sided (and partly
computer assisted) result in the full replica symmetric region, based on a
method which is related to ours.

Even if our conjecture is correct, there remains the issue how to prove it,
and in particular, whether our choice of the $\sigma$-fields is the best one.
As remarked before, there is nothing lost till (\ref{CurieWeiss}): The region
for $\left(  \beta,h\right)  $ where (\ref{CurieWeiss}) is correct is exactly
the region where (\ref{first_moment}) is correct. (\ref{CurieWeiss}) is a
standard large deviation problem with a Hamiltonian which is of ordinary
mean-field type. In principle, it is not difficult to write down a variational
formula for%
\[
\lim_{N\rightarrow\infty}\frac{1}{N}\log\sum_{\sigma}p^{\left(  k\right)
}\left(  \mathbf{\sigma}\right)  \exp\left[  N\beta F_{N,k}^{\prime
\prime\prime}\left(  \mathbf{\sigma}\right)  \right]
\]
or its $\mathbb{E}$-expectation, and then try to evaluate the $k\rightarrow
\infty$ limit. I have not been able to do that in the full high temperature
region, but, it doesn't appear being impossible. It would be interesting to
clarify this point. It is possible that the above limit is $0$ even beyond the
AT-line, but of course the AT-condition was used to prove that
(\ref{CurieWeiss}) is equivalent to (\ref{first_moment}).

\noindent\textbf{The second moment: }Regardless what the outcome for the first
moment is, I wouldn't expect that the plain vanilla second moment estimate
used here would work in the full high temperature regime. This disbelief is
based on a simple computation for the following toy model: Take $\sigma_{i}$
i.i.d. $\left\{  -1,1\right\}  $-valued with mean $m\in\left(  -1,1\right)
,\ m\neq0,$ and consider the spin glass model with partition function%
\[
Z_{N,\beta,m}:=\sum_{\mathbf{\sigma}}p\left(  \mathbf{\sigma}\right)
\exp\left[  \frac{\beta}{\sqrt{2}}\sum_{i,j}g_{ij}\hat{\sigma}_{i}\hat{\sigma
}_{j}-\frac{\beta^{2}N}{4}\left(  \frac{1}{N}\sum\nolimits_{i}\hat{\sigma}%
_{i}^{2}\right)  ^{2}\right]  ,
\]
where $\hat{\sigma}_{i}:=\sigma_{i}-m.$ If $m=0,$ then the second summand in
the exponent is $\beta^{2}N/4,$ and we have the standard SK-model at $h=0.$ Of
course, $\mathbb{E}Z_{N,\beta,h}=0,$ and the question is whether%
\begin{equation}
\lim_{N\rightarrow\infty}\frac{1}{N}\log Z_{N,\beta,m}=0.\label{toy_model}%
\end{equation}
This is certainly correct for small enough $\beta,$ as can for small $\beta,$
easily be proved by a second moment computation. Indeed,%
\[
\mathbb{E}Z_{N,\beta,m}^{2}=\sum_{\eta}\bar{p}\left(  \eta\right)  \exp\left[
\frac{N\beta^{2}}{2}\left(  N^{-1}\sum\nolimits_{i}\eta_{i}\right)
^{2}\right]  ,
\]
where the $\eta_{i}$ under $\bar{p}$ are i.i.d., and have the distribution of
$\left(  \sigma_{i}-m\right)  \left(  \sigma_{i}^{\prime}-m\right)  $ where
$\sigma_{i},\sigma_{i}^{\prime}$ are independent with distribution $p.$
Therefore%
\[
\lim_{N\rightarrow\infty}\frac{1}{N}\log\mathbb{E}Z_{N,\beta,m}^{2}=\sup
_{x}\left(  \beta^{2}x^{2}/2-J\left(  x\right)  \right)  ,
\]
$J$ being the standard rate function for $\bar{p}.$ It is easily checked that
the right hand side is $0$ for small $\beta,$ and as the second derivative of
$J$ at $0$ is $\left(  1-m^{2}\right)  ^{-2}/2,$ one would expect that this is
true as long as $\beta^{2}\left(  1-m^{2}\right)  ^{2}\leq1.$ That looks to be
the right de Almeida-Thouless condition. However, one easily checks that
$\sup_{x}\left(  \beta^{2}x^{2}/2-J\left(  x\right)  \right)  >0$ for $\beta$
sufficiently close but smaller than $\left(  1-m^{2}\right)  ^{-1},$ for any
choice of $m\neq0,$ a fact which is due to the non-vanishing third derivative
of $J$ at $0.$ Therefore, (\ref{toy_model}) cannot be proved with a simple
second moment computation up to the \textquotedblleft
natural\textquotedblright\ AT-condition. Actually, I don't know if
(\ref{toy_model}) is correct under $\beta^{2}\left(  1-m^{2}\right)  ^{2}%
\leq1.$ (If not already known, it could be a level-$2$-problem in Talagrand's
difficulty scale).

The computation in this toy case suggests that a simple second moment
estimate, in our asymmetric situation when $h\neq0,$ is not sufficient to
cover the full high temperature regime. 

\noindent\textbf{Gibbs distributions: }It is suggestive to conjecture that the
Gibbs distribution (\ref{Gibbs}), in high temperature, is somehow close to the
conditional annealed measure, i.e. the measure on $\Sigma_{N}$ defined by%
\[
\frac{\mathbb{E}_{k}\exp\left[  H_{\beta,h}\left(  \mathbf{\sigma}\right)
\right]  }{\sum_{\mathbf{\sigma}}\mathbb{E}_{k}\exp\left[  H_{\beta,h}\left(
\mathbf{\sigma}\right)  \right]  }%
\]
which, according to the analysis given in this paper, is a kind of complicated
random Curie-Weiss type model, with the centering of the $\mathbf{\sigma}$
given at the solution of the TAP-equation. If correct, this would suggest that
the finite $N$ high temperature Gibbs distributed can be approximated by
random Curie-Weiss models, with however infinitely (if $k\rightarrow\infty$)
many random quadratic interaction terms.

\noindent\textbf{Low temperature: }A main problem is to extend the method to
low temperature. There are many results in the physics literature about the
validity of the TAP equations in low temperature, see \cite{MPV}%
,\ \cite{Plefka}, but the plain iteration method in \cite{BoTAP} is certainly
not able to catch such solutions. However, it has recently been shown by Marc
M\'{e}zard that a similar iterative scheme for Hopfield model converges in the
retrieval phase of the model (see \cite{MezardTAP}). The approximate validity
of the TAP equations in generic $p$-spin models has recently been shown in
\cite{AufJag}. See also the results of \cite{BelKi} and \cite{ChenPanch} on
the TAP variational problem, and \cite{Subag} on the $p$-spin spherical model
where the TAP equations in the full temperature regime are discussed. These
results (except \cite{BelKi}) depend on already having a rather detailed
picture of the Gibbs distribution, whereas the attempt here is to present a
new viewpoint.

\end{document}